\newcommand{\marginlabel}[1]%
  {\mbox{}\marginpar{\raggedleft\hspace{0pt}\bfseries\sf#1}}
\def\ZZ{{\mathbf Z}}
\def\CC{{\mathbf C}}
\def\PP{{ \mathbf{P}}}
\def\OO{{\mathcal O}}
\def\L{\mathbf{L}}
\def\F{\mathcal{F}}
\def\E{\mathcal{E}}
\def\P{\mathcal{P}}
\def\Pic0{{\rm Pic}^0(X)}
\newcommand{\alb}{\textnormal{alb}}
\newcommand{\tn}[1]{\textnormal{#1}}
\newcommand{\Alb}{\textnormal{Alb}}
\newcommand{\lra}{\longrightarrow}
\newcommand{\noi}{\noindent}
\newcommand{\PPP}{\PP_{\text{sub}}}
\newcommand{\RR}{\mathbf{R}}
\newcommand{\FF}{\mathcal{F}}
\newcommand{\rk} {\text{rank }}
\newcommand{\HH}[3]{H^{{#1}}  ( {#2} , {#3}
 ) }
\newcommand{\hh}[3]{h^{{#1}} \ ( {#2} , {#3}
 ) }
\newcommand{\Hog}[2]{\Gamma   ( {#1} , {#2}
 ) }
\newcommand{\HHnospace}[2]{H^{{#1}}  (
{#2}  )  }
\newcommand{\Picc}{\textnormal{Pic}}
\newcommand{\AAA}{\mathbf{A}}
\newcommand{\VVV}{\mathbf{V}}
\newcommand{\linser}[1]{| \, {#1} \, |}
\newcommand{\Shom}{\mathcal{H}om}
\newcommand{\Shomm}[1]{\Shom\big( {#1} \big)}
\newcommand{\codim}{\tn{codim}\, }
\newcommand{\pr}{\prime}
\theoremstyle{plain}
\newtheorem{theorem}{Theorem}[section]
\newtheorem{theoremalpha}{Theorem}
\newtheorem{proposition/example}[theorem]{Proposition/Example}
\newtheorem{proposition}[theorem]{Proposition}
\newtheorem{corollary}[theorem]{Corollary}
\newtheorem{lemma}[theorem]{Lemma}
\newtheorem{variant}[theorem]{Variant}
\newtheorem{conjecture}[theorem]{Conjecture}
\theoremstyle{definition}
\newtheorem{definition}[theorem]{Definition}
\newtheorem{remark}[theorem]{Remark}
\newtheorem{example}[theorem]{Example}
\newtheorem{conjecture/question}[theorem]{Conjecture/Question}
\newtheorem{remark/definition}[theorem]{Remark/Definition}
\newtheorem{definition/notation}[theorem]{Definition/Notation}
\theoremstyle{remark}
\numberwithin{equation}{section}
\begin{document}

\title[Derivative Complex, BGG Correspondence, and Numerical Inequalities]{Derivative Complex, BGG Correspondence,  and  Numerical Inequalities for Compact K\"ahler Manifolds}

\author {Robert Lazarsfeld}
\address{Department of Mathematics, University of Michigan,
530 Church Street, Ann Arbor, MI 48109, USA } \email{{\tt
rlaz@umich.edu}}
\thanks{First author partially supported by NSF grant DMS-0652845}

\author{Mihnea Popa}
\address{Department of Mathematics, University of Illinois at Chicago,
851 S. Morgan Street, Chicago, IL 60607, USA } \email{{\tt
mpopa@math.uic.edu}}
\thanks{Second author partially supported by NSF grant DMS-0758253 and a Sloan Fellowship}

\maketitle
\setlength{\parskip}{0 in}


\setlength{\parskip}{.07 in}


\section*{Introduction}

Given an irregular compact K\"ahler manifold $X$, one can form the \textit{derivative complex} of $X$, which governs the deformation theory of the groups $\HH{i}{X}{\alpha}$ as $\alpha$ varies over $\Picc^0(X)$. Together with its variants, it plays a central role in a body of work involving generic vanishing theorems. The purpose of this paper is to present two new applications of this complex. First, we show that it fits neatly into the setting of the so-called  Bernstein-Gel'fand-Gel'fand (BGG) correspondence between modules over an exterior algebra and linear complexes over a symmetric algebra (cf.\ \cite{bgg}, \cite{efs}, \cite{eisenbud}, \cite{coanda}). What comes out here is that some natural cohomology modules associated to $X$ have a surprisingly simple algebraic structure, and that conversely one can read off from these modules some basic geometric invariants associated to the Albanese mapping of $X$. Secondly, under an additional hypothesis, the derivative complex determines a vector bundle on the projectivized tangent space to $\Picc^0(X)$ at the origin. We show that this bundle, which encodes the infinitesimal behavior of the Hilbert scheme of paracanonical divisors along the canonical linear series, can be used to generate  inequalities among numerical invariants of $X$.

Turning to details, we start by introducing the main homological players in our story. Let $X$ be a compact K\"ahler manifold of dimension $d$, with $\HH{1}{X}{\OO_X} \ne 0$,  
and let $\PP= \PPP\big( \HH{1}{X}{\OO_X}\big)$ be the projective space of one-dimensional subspaces of $\HH{1}{X}{\OO_X}$. Thus a point in $\PP$ is given by a non-zero vector $v \in \HH{1}{X}{\OO_X}$, defined up to scalars. Cup product with $v$ determines a complex $\underline{\mathbf{ L}}_X$ of vector bundles on $\PP$:  
\begin{multline} \label{BGG.Sheaf.Cx.Intro}
 0\lra \OO_{\PP}(-d)\otimes \HH{0}{X}{\OO_X} \lra  \OO_{\PP} (-d+1)\otimes \HH{1}{X}{\OO_X}\lra  \ldots \\ \ldots \lra \OO_{\PP} (-1) \otimes \HH{d-1}{X}{\OO_X} \lra \OO_{\PP} \otimes \HH{d}{X}{\OO_X} \rightarrow 0.
  \end{multline}
Letting $S = \tn{Sym}\big( \HH{1}{X}{\OO_X}^\vee \big)$ be the symmetric algebra on the vector space $\HH{1}{X}{\OO_X}^\vee$, taking global sections in 
$\underline{\mathbf{ L}}_X$  gives rise to 
 a linear complex ${\bf L}_X$ of graded $S$-modules  in homological degrees $0$ to $d$: 
\begin{equation} \label{BGG.Complex.Intro}
0\longrightarrow S\otimes_{\CC} \HH{0}{X}{\OO_X} \longrightarrow  S\otimes_{\CC} \HH{1}{X}{\OO_X}\longrightarrow \ldots \longrightarrow S\otimes_{\CC} \HH{d}{X}{\OO_X} \lra 0. 
\end{equation}
These two complexes are avatars of the \textit{derivative complex} introduced in \cite{gl2} and studied for instance in \cite{hacon}, \cite{pareschi}, \cite{ch}, \cite{PP2}, which computes locally the pushforward to $\Picc^0(X)$ of the Poincar\'e line bundle on $X \times \Picc^0(X)$.
    We shall  also be interested in the coherent sheaf $\FF = \FF_X$ on $\PP$ arising as  the cokernel of the right-most map in the complex $\underline{\mathbf{ L}}_X$, so that one has an exact sequence:
\begin{equation} \label{BGG.Sheaf.Eqn.Intro}
\OO_{\PP} (-1) \otimes \HH{d-1}{X}{\OO_X} \lra \OO_{\PP} \otimes \HH{d}{X}{\OO_X} \rightarrow \FF\lra 0.
\end{equation}
For reasons that will become apparent shortly, we call ${\underline{\bf L}}_X$ and ${\bf L}_X$ the BGG-\textit{complexes} of $X$, and $\FF_X$ its BGG-\textit{sheaf}. 

We shall be concerned with the exactness properties of ${\underline{\bf L}}_X$ and ${\bf L}_X$.
Let
\[  \alb_X : X \lra \Alb(X)\] be the Albanese mapping of $X$,
and let 
\[ k \ = \ k(X) \ = \ \dim X \,  - \, \dim \alb_X(X) \]
be the dimension of the general fiber of $\alb_X$. 
We say that $X$ carries an \textit{irregular fibration} if it admits a surjective morphism $  X \lra Y$ with connected positive dimensional fibres onto a normal analytic variety $Y$ with the property that (any smooth model of) $Y$ has maximal Albanese dimension. These are the higher-dimensional analogues of irrational pencils in the case of surfaces.  The behavior of ${\bf L}_X$ and $\underline{{\bf L}}_X$ is  summarized in the following technical statement, which pulls together results from the papers cited above.

\begin{theoremalpha}\label{GV_cons}\begin{enumerate}
\item[(i).] The complexes $\mathbf{L}_X$ and  $\underline{\mathbf{ L}}_X$ are  exact in the first $d-k$ terms from the left, but $\mathbf{L}_X$ has non-trivial homology at the next term to the right.
\vskip 5pt
 \item[(ii).] 
 Assume that $X$ does not carry any irregular fibrations. Then the \textnormal{BGG }sheaf $\FF$ is a vector bundle on $\PP$ with
 $ \tn{rk}(\FF)= \chi (\omega_X)$, and  $\underline{\mathbf{ L}}_X$ is a resolution of $\FF$.
\end{enumerate}
\end{theoremalpha}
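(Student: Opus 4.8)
The plan is to study both complexes fibrewise and to read their exactness off from the generic vanishing theorem. The key observation is that the fibre of $\underline{\mathbf{L}}_X$ over a point $[v]\in\PP$ is, up to the one-dimensional twists by the line bundles $\OO_\PP(-d+j)$, exactly the cup-product complex $\HH{0}{X}{\OO_X}\xrightarrow{\cup v}\HH{1}{X}{\OO_X}\to\cdots\to\HH{d}{X}{\OO_X}$. Since the terms of $\underline{\mathbf{L}}_X$ are vector bundles, the homology of this fibre complex at a given spot governs both the non-local-freeness of the cohomology sheaves of $\underline{\mathbf{L}}_X$ and, at the right-hand end, the failure of $\FF$ to be locally free; this reduces everything to understanding for which $v$ the cup-product complex is exact.

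For (i) I would invoke the Green--Lazarsfeld generic vanishing theorem through the cohomology support loci $V^i(\OO_X)=\{\alpha\in\Picc^0(X):\HH{i}{X}{\alpha}\neq 0\}$, together with the deformation-theoretic (tangent cone) principle that the cup-product complex is precisely the linearisation of these loci at the origin. Serre duality turns generic vanishing into the estimate $\codim V^i(\OO_X)\ge (d-i)-k$; this is $\ge 1$ exactly when $i<d-k$, so the tangent cone to $V^i(\OO_X)$ is then a proper subcone of $\HH{1}{X}{\OO_X}$ and the cup-product complex is exact at position $i$ for general $v$. At $i=d-k$ the bound degenerates, the relevant cone fills up $\HH{1}{X}{\OO_X}$, and exactness fails, producing the asserted nonzero homology of $\mathbf{L}_X$. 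To pass from generic fibrewise exactness to genuine exactness of the complexes of sheaves and of graded modules in this range, I would use the sheaf-theoretic form of generic vanishing --- that $\OO_X$ is a $GV$-sheaf (Hacon, Pareschi--Popa), so that the derived-pushforward complex computed by $\underline{\mathbf{L}}_X$ has vanishing cohomology sheaves for the first $d-k$ terms --- and then transport the statement to $\mathbf{L}_X$ via the BGG dictionary relating it to the exterior-algebra module $\bigoplus_i\HH{i}{X}{\OO_X}$.

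For (ii) I would first note that the absence of irregular fibrations forces $k=0$: if $k>0$, the Stein factorisation $X\to Y$ of the map of $X$ onto $\alb_X(X)\subseteq\Alb(X)$ would be a surjection with connected positive-dimensional fibres onto a normal $Y$ that maps finitely to an abelian variety, hence has maximal Albanese dimension --- an irregular fibration. With $k=0$, part (i) already shows $\underline{\mathbf{L}}_X$ is exact in its first $d$ terms, i.e.\ everywhere except at the cokernel, so it is a resolution of $\FF$. Granting local freeness, additivity of ranks along the resolution gives $\tn{rk}(\FF)=\sum_{j=0}^{d}(-1)^{d-j}\,\hh{j}{X}{\OO_X}=\chi(\omega_X)$, the last equality by Serre duality.

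The step I expect to be the main obstacle is the local freeness of $\FF$. Computing $\mathrm{Tor}$ against residue fields along the resolution, local freeness at $[v]$ is equivalent to exactness of the fibre cup-product complex in positions $0,\dots,d-1$ for that particular $v$ --- a pointwise statement far stronger than the generic vanishing used in (i). I would deduce it from the structure theory of the loci $V^i(\OO_X)$ (Green--Lazarsfeld, Simpson, Ein--Lazarsfeld): their positive-dimensional components through the origin are exactly the ones cut out by irregular fibrations, so in the absence of such fibrations the tangent cone to each $V^i(\OO_X)$ at the origin collapses to $\{0\}$, forcing the cup-product complex to be exact at every intermediate position for all $v\neq 0$. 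Establishing this pointwise exactness, rather than the merely generic statement of (i), is the delicate part of the argument.
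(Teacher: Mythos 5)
The genuine gap is the non-exactness assertion in part (i), namely that $\mathbf{L}_X$ has non-trivial homology at $S\otimes \HH{d-k}{X}{\OO_X}$. Your justification --- ``at $i=d-k$ the bound degenerates, the relevant cone fills up $\HH{1}{X}{\OO_X}$, and exactness fails'' --- is a non-sequitur: a lower bound on codimension becoming vacuous proves nothing about the locus being large, and the claim about the cone is in fact false in general. Take $X = A\times \PP^k$ with $A$ abelian of dimension $d-k$ (the example in the paper's remark following Proposition \ref{BGG.Non.Exactness.Prop}). There $V^{d-k}(\OO_X) = -V^{k}(\omega_X) = \{0\}$, so the tangent cone at the origin is $\{0\}$ rather than all of $\HH{1}{X}{\OO_X}$; moreover $\underline{\mathbf{L}}_X$ is a twisted Koszul complex on $\PP^{d-k-1}$, exact at \emph{every} spot, and for every $v\neq 0$ the fibre complex $(\Lambda^{\bullet}\HH{1}{X}{\OO_X}, \wedge v)$ is exact at position $d-k$. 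Nevertheless $\mathbf{L}_X$ has homology $\CC$ there. This shows that the failure of exactness of $\mathbf{L}_X$ is not a fibrewise phenomenon over $\PP$ at all: it lives at the origin of the affine cone, equivalently in the $S$-grading, and the fibrewise framework you set up at the outset (``this reduces everything to understanding for which $v$ the cup-product complex is exact'') is structurally incapable of detecting it. A genuine argument is required, and the paper gives two: the referee's proof of Proposition \ref{BGG.Non.Exactness.Prop}, which passes to a model admitting $f : X \to Y$ with $\dim Y = d-k$, $Y$ of maximal Albanese dimension and $f^*$ an isomorphism on $H^1$, and observes that $1\otimes f^*\alpha$ (with $0\neq \alpha \in \HH{d-k}{Y}{\OO_Y}$, nonzero by Serre duality) is a cycle of $S$-degree $0$ whereas every boundary has positive $S$-degree; or alternatively Remark \ref{Streamlined.Non.Exactness}, which uses Koll\'ar's splitting of $\mathbf{R}a_*\omega_X$ to produce generators of $Q_X$ in degrees $\le -k$. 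Your proposal contains neither ingredient, and its mechanism cannot be repaired while staying on $\PP$.

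The rest of your proposal does track the paper. The exactness half of (i) uses the same inputs (the vanishing $R^i p_{2*}\mathcal{P} = 0$ for $i < d-k$ of Hacon/Pareschi/Pareschi--Popa, plus the Green--Lazarsfeld identification of the derivative complex with $\mathbf{R}p_{2*}\mathcal{P}$), and part (ii) matches Proposition \ref{basic}: absence of irregular fibrations gives $k=0$ and, by \cite{gl2} Theorem 0.1, finiteness of $V^i(\omega_X)$ for $i>0$, whence pointwise exactness of the fibre complexes (your tangent-cone criterion is the same mechanism as the constant-rank argument the paper cites from \cite{el} and \cite{hp}), local freeness of $\FF$, and the rank count $\tn{rk}(\FF)=\chi(\omega_X)$. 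One caveat even in the exactness half: you propose to prove sheaf-level exactness on $\PP$ and then ``transport'' it to $\mathbf{L}_X$ via the BGG dictionary, but that is the false direction --- sheafification is exact, so module exactness implies sheaf exactness, while the converse fails (again by $X=A\times\PP^k$). The paper sidesteps this by working with the complex $\mathcal{K}^{\bullet}$ of trivial bundles on the \emph{affine} space $\tn{Spec}\,\tn{Sym}(\HH{1}{X}{\OO_X}^{\vee})$, where taking global sections is exact, identifying the stalk at the origin of its cohomology with $(R^ip_{2*}\mathcal{P})_0$, and spreading the vanishing out by the radial scaling of the differential; run your argument on the affine cone rather than on $\PP$ and this half becomes essentially Proposition \ref{BGG.Exactness.Prop}.
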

\noi The essential goal of the present paper is to show that these exactness properties have some interesting consequences for the algebra and geometry of $X$.

The first applications concern
the  cohomology modules
\[ P_X \, =  \, \underset{i = 0}{\overset{d}{\oplus}} ~ \HH{i}{X}{ \OO_X} \ \ , \ \ 
Q_X \, =  \, \underset{i = 0}{\overset{d}{\oplus}}~ H^i (X, \omega_X) 
\] 
of the structure sheaf and the canonical bundle of $X$.
Via cup product, we may view these as graded modules over the exterior algebra 
\[ E \ =_{\text{def}} \  \Lambda^* \HH{1}{X}{\OO_X}\] on $\HH{1}{X}{\OO_X}$.\footnote{Following the degree conventions of \cite{efs} and \cite{eisenbud}, we take $E$ to be generated in degree $-1$, and then declare that the summand   $\HH{i}{X}{\omega_X}$ of   $Q_X$ has degree  $-i$, while $\HH{i}{X}{\OO_X}$ has degree $d-i$ in 
$P_X$.}
There has been a considerable amount of recent work in the commutative algebra community aimed at extending to modules over an exterior algebra aspects of the classical theory of graded modules over a polynomial ring.  In  the present context, it is natural to ask whether one can say anything in general about the algebraic properties of the modules $P_X$ and $Q_X$ canonically associated to 
a K\"ahler manifold $X$: for instance, in what degrees do generators and relations live?
  
  An elementary example might be helpful here. Consider an abelian variety $A$ of dimension $d +1$, and let $X \subseteq A$ be a smooth hypersurface of very large degree. Then by the Lefschetz theorem one has
  \begin{align*}
\HH{i}{X}{\OO_X} \ &= \ \HH{i}{A}{\OO_A}\  = \ \Lambda^i  \HH{1}{X}{\OO_X} \ \text{ for } i < d \\
\HH{d}{X}{\OO_X} \ &\supsetneqq \ \HH{d}{A}{\OO_A} \  = \ \Lambda^d \HH{1}{X}{\OO_X}.
  \end{align*}
Thus $P_X$ has generators as an $E$-module in two degrees:  $1 \in \HH{0}{X}{\OO_X}$, and many new generators in $\HH{d}{X}{\OO_X}$. If one takes the viewpoint that the  simplest  $E$-modules are those whose generators appear in a single degree, this means that $P_X$ is rather complicated. On the other hand, the situation with the dual module $Q_X$ is quite different. Here $\HH{0}{X}{\omega_X}$ is big, and the  maps
\[  \HH{0}{X}{\omega_X} \otimes \Lambda^i \HH{1}{X}{\OO_X} \lra \HH{i}{X}{\omega_X} \]
are surjective, i.e. $Q_X$ is generated in degree $0$. This suggests that  $Q_X$ behaves more predictably as an $E$-module than does $P_X$. 
Our first main result asserts that quite generally the module $Q_X$ has simple homological properties. 

 Specifically, given a graded $E$-module $M = \oplus_{i=0}^{-d} M_i$ generated in degrees $\le 0$, one says that $E$ is $m$-regular if the generators of $M$ appear in degrees $0, -1, \ldots, -m$, the relations among these generators are in degrees $-1, \ldots, -(m+1)$, and more generally the $p^{\text{th}}$ module of syzygies of $M$ has all its generators in degrees $\ge -(p+m)$. This is the analogue for modules over the exterior algebra of the familiar notion of Castelnuovo-Mumford regularity for graded modules over a polynomial ring. As in  the classical case, one should see regularity as being a measure of algebraic complexity, with small regularity corresponding to low complexity.  For example, in the example above (an appropriate shift of) the module $P_X$ has worst-possible regularity $= d$. 
 
 The following result asserts that the regularity of $Q_X$ is computed by the Albanese fiber-dimension of $X$.
 \begin{theoremalpha} \label{Intro.Reg.Thm}
As above, let $X$ be a compact K\"ahler manifold, and let $ k =\dim X - \dim \alb_X(X)$. Then
\[   \textnormal{reg}(Q_X) \ = \ k, \]
 i.e. $Q_X$ is $k$-regular, but not $(k-1)$-regular as an $E$-module.  In particular, $X$ has maximal Albanese dimension \tn{(}i.e.\ $k = 0$\tn{)} if and only if  $Q_X$ is generated in degree $0$ and has a linear free resolution.
\end{theoremalpha}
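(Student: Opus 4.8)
The plan is to read off $\operatorname{reg}(Q_X)$ from the BGG correspondence and then feed in the geometric input of Theorem~\ref{GV_cons}. The first step is to locate the complex $\mathbf{L}_X$ of \eqref{BGG.Complex.Intro} inside the BGG picture. Its term in homological position $i$ is $S \otimes \HH{i}{X}{\OO_X}$, i.e.\ $S$ tensored with the graded piece of $P_X$ in degree $d-i$; thus $\mathbf{L}_X$ is precisely the linear complex of free $S$-modules that the BGG functor attaches to the $E$-module $P_X$. On the other hand, Serre duality $\HH{i}{X}{\omega_X} \cong \HH{d-i}{X}{\OO_X}^\vee$ identifies $Q_X$ with the graded $\CC$-dual of $P_X$ as $E$-modules, and correspondingly identifies the linear complex attached to $Q_X$ with the $S$-dual of $\mathbf{L}_X$, up to a twist and a reversal of homological degree. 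The outcome I want from this bookkeeping is a clean statement that the regularity of $Q_X$ is detected by the exactness of $\mathbf{L}_X$, read from the left.

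The engine for this is the regularity dictionary of the BGG correspondence (see \cite{efs}, \cite{eisenbud}): the Castelnuovo--Mumford regularity of a finitely generated graded $E$-module is computed by the exactness properties of its associated linear complex of free $S$-modules. Concretely, I would package the relevant direction as a lemma asserting that $\operatorname{reg}(Q_X) \le m$ holds precisely when $\mathbf{L}_X$ is exact in its first $d-m$ terms from the left, while nonzero homology of $\mathbf{L}_X$ at the $(d-m)$-th term forces a generator or syzygy of $Q_X$ in a degree violating $(m-1)$-regularity. The proof of the lemma is pure homological algebra over the Koszul-dual pair $(E,S)$: one tracks the minimal free resolution of $Q_X$ over $E$ through the functors $\mathbf{L}$ and $\mathbf{R}$, matching the degrees of the syzygies of $Q_X$ with the homological positions at which $\mathbf{L}_X$ fails to be exact.

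Granting the lemma, the theorem is immediate from Theorem~\ref{GV_cons}(i), which asserts that $\mathbf{L}_X$ is exact in its first $d-k$ terms from the left but has nontrivial homology at the next term. The first statement gives $\operatorname{reg}(Q_X) \le k$, and the second gives that $Q_X$ is not $(k-1)$-regular; together they yield $\operatorname{reg}(Q_X) = k$. For the final assertion, $k = 0$ is by definition the condition that $X$ has maximal Albanese dimension, while $\operatorname{reg}(Q_X) = 0$ is, by the definition of regularity recalled above, exactly the statement that $Q_X$ is generated in degree $0$ and has a linear free resolution over $E$. Note that this argument uses only part (i) of Theorem~\ref{GV_cons}, and in particular imposes no hypothesis on irregular fibrations, consistent with the generality of the present statement.

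Since the geometry is entirely absorbed into Theorem~\ref{GV_cons}, the main obstacle is the homological bookkeeping behind the lemma: setting up the BGG dictionary with the precise grading and duality conventions so that the numerology \emph{exact in $d-k$ terms} translates exactly into \emph{$k$-regular}, and confirming that the homology guaranteed at the $(d-k)$-th term sits in precisely the graded degree needed to certify failure of $(k-1)$-regularity, rather than merely some higher failure of regularity. I expect the genuine subtlety to be in the duality step, since $S$-dualizing a complex that is exact only in an initial range does not automatically produce a complex exact in the complementary range; one must therefore argue at the level of the individual graded strands of the resolution of $Q_X$ rather than with the coarse homology of the dualized complex.
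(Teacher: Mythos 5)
Your proposal is correct and follows essentially the same route as the paper: the paper likewise identifies $\mathbf{L}_X$ with the BGG complex $\mathbf{L}(P_X)$ of the $E$-module $P_X$ (its Lemma in \S 2), invokes Serre duality to realize $Q_X$ as the dual module $\widehat{P_X}$, and quotes the regularity dictionary (Proposition \ref{regularity}, an addendum to the results of Eisenbud--Fl\o ystad--Schreyer) stating that $\widehat{P}$ is $m$-regular if and only if $\mathbf{L}(P)$ is exact at the first $d-m$ steps from the left, so that Theorem \ref{Intro.Reg.Thm} follows at once from Theorem \ref{GV_cons}(i). The duality subtlety you flag at the end is absorbed into that dictionary, which is formulated directly in terms of the complex built from $P$ rather than by $S$-dualizing, exactly as in your packaged lemma.
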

\noi  
According to the general BGG-correspondence, which we quickly review in \S 2, the regularity of a graded module over the exterior algebra $E$ is governed by the exactness properties of a linear complex of modules over a symmetric algebra. Our basic observation is that for the module $Q_X$, this complex is precisely  ${\bf L} _X$. Then Theorem \ref{Intro.Reg.Thm} becomes an immediate consequence of statement (i) of Theorem \ref {GV_cons}.   Note that it follows from  Theorem \ref{Intro.Reg.Thm} that the Albanese dimension of $X$ is determined by purely algebraic data encoded in $Q_X$; it would be interesting to know whether this has any applications.

Our second line of application for Theorem \ref{GV_cons} is as a mechanism for generating inequalities on numerical invariants of $X$.  The search for relations among the Hodge numbers of an irregular variety has a long history, going back at least as far as the classical theorem of Castelnuovo and de Franchis giving a lower bound on the holomorphic Euler characteristic of surfaces without irrational pencils.     Assuming that $X$ does not have any irregular fibrations,  
statement (ii) of Theorem \ref{GV_cons}  implies that the BGG-sheaf $\FF$ is a vector bundle on the projective space $\PP$,  whose invariants are determined by $\underline{\mathbf{L}}_X$. Geometric facts about vector bundles on projective spaces then give rise to inequalities for $X$. Specifically, consider the formal power series:
 \begin{equation} \label{power.series}
\gamma  (X; t) \ =_{\text{def}} \  \prod_{j = 1}^{d} (1 - jt)^{(-1)^j  h^{d, j}} \ \in \ \ZZ[[t]], 
\end{equation} 
where $h^{i,j} = h^{i,j}(X)$. Write $q = \hh{1}{X}{\OO_X}$ for the irregularity of $X$ (so that $q = h^{d,d-1}$), and for  $1 \le i \le q-1$ denote by 
\[ \gamma_i  = \gamma_i(X) \ \in\  \ZZ\]  the coefficient of $t^i$ in $\gamma(X;t)$. Thus $\gamma_i$ is a polynomial in the $h^{d,j}$.  We prove:
\begin{theoremalpha} \label{Intro.Chern.Ineq.Thm}
Assume that $X$ does not carry any irregular fibrations \tn{(}so that in particular $X$ itself has maximal Albanese dimension\tn{)}. Then  \begin{enumerate} \item[(i).] Any Schur polynomial of weight $\le q-1$ in the $\gamma_i$ is non-negative. In particular \[ \gamma_i(X) \ \ge \ 0\] for every $1 \le i \le q-1$. 
\vskip 4pt
\item[(ii).] If $i$ is any index with $\chi (\omega_X) < i < q$, then $\gamma_i(X) = 0$. 
\vskip 4pt
\item[(iii).] One has $\chi (\omega_X) \ge q  - d$. \end{enumerate}
\end{theoremalpha}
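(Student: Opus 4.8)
The plan is to deduce all three assertions from the geometry of the BGG-sheaf $\FF$, which by Theorem \ref{GV_cons}(ii) is a vector bundle of rank $\chi(\omega_X)$ on $\PP=\PPP\big(\HH{1}{X}{\OO_X}\big)$, a projective space of dimension $q-1$. The first step is to identify the $\gamma_i(X)$ as Chern classes of $\FF$. Writing $h=c_1(\OO_{\PP}(1))$, I would compute the total Chern class of $\FF$ from the resolution $\underline{\mathbf{L}}_X$: since $[\FF]=\sum_{j=0}^{d}(-1)^{d-j}\,\hh{j}{X}{\OO_X}\,[\OO_{\PP}(-d+j)]$ in the Grothendieck group, multiplying the corresponding factors $\big(1+(j-d)h\big)^{(-1)^{d-j}\hh{j}{X}{\OO_X}}$ and using the Serre-duality substitution $\hh{j}{X}{\OO_X}=h^{d,d-j}$ gives, after reindexing, exactly $c(\FF)=\gamma(X;h)$, i.e. the series \eqref{power.series} evaluated at $h$. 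Hence $c_i(\FF)=\gamma_i\,h^i$ for every $i$, and the three parts become statements about the Chern classes of $\FF$.

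For (i), the key point is that $\FF$ is globally generated: by \eqref{BGG.Sheaf.Eqn.Intro} it is a quotient of the trivial bundle $\OO_{\PP}\otimes\HH{d}{X}{\OO_X}$, hence nef. The Fulton--Lazarsfeld theorem on positivity of Schur polynomials for nef bundles then says that any Schur polynomial $P_\lambda$ in the Chern classes of $\FF$ is a numerically non-negative class. For a partition $\lambda$ of weight $w=|\lambda|\le q-1=\dim\PP$ one has, substituting $c_i(\FF)=\gamma_i h^i$ into the determinantal formula for $P_\lambda$, that $P_\lambda(c(\FF))=\big(\text{the corresponding Schur polynomial in the }\gamma_i\big)\cdot h^{w}$; intersecting with $h^{q-1-w}\ge 0$ shows this Schur polynomial in the $\gamma_i$ is $\ge 0$, the case $\lambda=(i)$ giving $\gamma_i\ge 0$. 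For (ii), since $\tn{rk}(\FF)=\chi(\omega_X)$ its Chern classes vanish above the rank, so $\gamma_i\,h^i=c_i(\FF)=0$ whenever $\chi(\omega_X)<i\le q-1$, which is precisely the asserted range.

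The substance lies in (iii), the lower bound $\chi(\omega_X)\ge q-d$ on $r=\tn{rk}(\FF)$. When $d\ge q$ there is nothing to prove, since $\chi(\omega_X)\ge 0$ by generic vanishing; so assume $d\le q-1$. I would argue through the zero locus of a general section. A nonzero $w\in\HH{d}{X}{\OO_X}$ determines, via \eqref{BGG.Sheaf.Eqn.Intro}, a section $\sigma_w$ of $\FF$ whose scheme of zeroes is
\[ Z(\sigma_w)\ =\ \big\{\,[v]\in\PP \ :\ w\in v\wedge \HH{d-1}{X}{\OO_X}\,\big\}, \]
the locus where $w$ lies in the image of cup product with $v$. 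Since $\FF$ is globally generated of rank $r$, a general section vanishes in pure codimension $r$ (splitting off a trivial summand should a general section be nowhere zero), so for general $w$ one has $\dim Z(\sigma_w)=(q-1)-r$. Thus $r\ge q-d$ is equivalent to the dimension estimate $\dim Z(\sigma_w)\le d-1$.

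Proving this estimate is the main obstacle, and it is where the hypothesis that $X$ carries no irregular fibration must enter. The locus $Z(\sigma_w)$ is a linearized shadow of the cohomological support loci $V^i(\omega_X)\subseteq\Picc^0(X)$: the condition $w\in v\wedge\HH{d-1}{X}{\OO_X}$ records the failure of exactness of the cup-product (derivative) complex at $\HH{d}{X}{\OO_X}$ in the direction $v$. By the generic vanishing theory underlying Theorem \ref{GV_cons} --- the Green--Lazarsfeld theorems and the description of the $V^i$ as finite unions of translates of subtori --- the absence of irregular fibrations should force these loci, and hence $Z(\sigma_w)$, to have codimension at least $q-d$ in $\PP$. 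I expect this codimension bound to be the crux; once it is in hand, (i)--(iii) follow formally as above. As a sanity check, for a curve $(d=1)$ one has $\FF\cong T_{\PP}(-1)$, $\HH{0}{X}{\OO_X}$ is one-dimensional, and $Z(\sigma_w)=\{[w]\}$ is a single point, so $\dim Z(\sigma_w)=0=d-1$ and $r=q-1=\chi(\omega_X)$, matching equality in (iii).
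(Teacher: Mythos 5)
Parts (i) and (ii) of your proposal are correct and essentially the paper's own argument: you identify $\gamma(X;t)$ as the Chern polynomial of $\FF$ via the linear resolution, use global generation for positivity (the paper phrases this as the Schur polynomials being represented by effective cycles, you invoke Fulton--Lazarsfeld positivity for nef bundles --- the same point), and deduce (ii) from vanishing of Chern classes above the rank.

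Part (iii), however, contains a genuine gap, and you have flagged it yourself. The estimate $\dim Z(\sigma_w)\le d-1$ is not an independent handle on the problem: for a globally generated bundle of rank $r$, the zero locus of a general section, when nonempty, has pure codimension $r$, so bounding $\dim Z(\sigma_w)$ by $d-1$ is literally equivalent to the inequality $r=\chi(\omega_X)\ge q-d$ you are trying to prove, and no argument for it is supplied. Your hope that it follows from generic vanishing theory and the structure of the loci $V^i(\omega_X)$ cannot work as stated: under the hypothesis of the theorem these loci are already finite for $i>0$ (Remark \ref{SupportLociFinite}), and that finiteness is entirely consumed in establishing Theorem \ref{GV_cons}(ii), i.e.\ that $\FF$ is locally free with the linear resolution \eqref{BGG.Lin.Resoln} --- the very facts you have already used for (i) and (ii). No further codimension information about $Z(\sigma_w)$ flows from the support loci; if it did, the paper would not need any additional machinery. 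What the paper actually uses for (iii) is genuinely extra input from the theory of bundles on projective space: either one chases \eqref{BGG.Lin.Resoln} to show $\FF$ and its twists have no intermediate cohomology in the relevant range and invokes the Evans--Griffith splitting criterion (if $\chi\le q-1-d$ then $\FF$ would be a direct sum of line bundles, contradicting the resolution), or --- the paper's more self-contained route, which begins exactly where your zero-locus set-up does --- one splices \eqref{BGG.Lin.Resoln} with the Koszul complex of a general section $s$ and derives a contradiction from a Le Potier-type vanishing theorem for the globally generated bundle $\FF$ together with Kodaira vanishing on $Z=\tn{Zeroes}(s)$, rather than from a codimension bound on $Z$. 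A minor additional slip: a nowhere-vanishing section gives an exact sequence $0\to\OO_{\PP}\to\FF\to\FF'\to 0$, not automatically a direct sum decomposition, so even your fallback in the empty-zero-locus case needs repair; but the essential missing piece is the vanishing-theorem (or syzygy-theorem) argument itself.
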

\noi
Part (i) expresses in particular the fact that the Chern classes of $\mathcal{F}$ are non-negative.  
For example, when $i = 1$ this yields (under the assumption of the theorem) the inequality
\begin{equation}
 h^{d,1} - 2 h^{d,2} +  3 h^{d,3} - \ldots + (-1)^{d +1}\cdot d \cdot  h^{d,d} \ \ge \ 0. \tag{*}
\end{equation}
 This includes some classically known statements (for instance if $\dim X = 3$, (*) reduces to the Castelnuovo--de Franchis-type inequality $h^{0,2} = h^{3,1} \ge 2q - 3$), but the positivity of higher $\gamma_i$ and part (iii)
produce new stronger results. In fact, for threefolds satisfying the hypotheses of the theorem, an inequality  kindly provided by a referee and the inequality in (iii) together imply that asymptotically
$$ h^{0,2} \ \succeq \ 4q \ {\rm ~and~} \ h^{0,3} \ \succeq \ 4q$$
while in the case of fourfolds the same plus the inequality $\gamma_2 \ge 0$ give asymptotically
$$ h^{0,2} \ \succeq \ 4q  \ , \
h^{0,3} \ \succeq \ 5q + \sqrt{2q} \ {\rm ~and~} \
h^{0,4} \ \succeq \ 3q + \sqrt{2q}.$$
(See Corollary \ref{hodgethreefold} for more details.) 
When $X$ is a surface without irrational pencils, related methods applied to $\Omega_X^1$ yield a new inequality for $h^{1,1}$ as well. 
Assertion (iii) is (a slightly special case of) the main result of \cite{PP2}, for which we provide a simple proof. The idea is that when the rank $\tn{rk}(\mathcal{F}) = \chi(\omega_X)$ of $\mathcal{F}$ is small compared compared to $q -1 = \dim \PP$, it is hard for such a bundle to exist, giving rise to lower bounds on $\chi(\omega_X)$.  
The method used here allows us to further analyze possible borderline cases when the Euler characteristic is small, and conjecture stronger inequalities. All of the inequalities above can fail when $X$ does carry irregular fibrations.

Given the role it plays in \S 3, it is natural to ask what is the geometric meaning of the BGG sheaf $\FF_X$. Our last result,
which could be considered as an appendix to \cite{gl1} \S4 and \cite{gl2}, shows that in fact it has a very natural interpretation. Recall that in classical terminology, a \textit{paracanonical divisor} on $X$ is an effective divisor algebraically equivalent to a canonical divisor.  The set of all such is parametrized by the Hilbert scheme (or Douady space) $\tn{Div}^{\{ \omega \}}(X)$, which admits an Abel-Jacobi mapping
\[  u : \tn{Div}^{\{ \omega \}}(X) \lra \Picc^{\{ \omega \}}(X) 
\]
to the corresponding component of the Picard torus of $X$. The projective space $| \, \omega_X \, |$ parametrizing all canonical divisors sits as a subvariety of $ \tn{Div}^{\{ \omega \}}(X)$: it is the fibre of $u$ over the point $[ \, \omega_X\, ] \in \Picc^{\{ \omega \}}(X) $. 
On the other hand, the projectivization  $\PP(\FF) = \tn{Proj}_{\PP } \big( \tn{Sym}(\FF) \big) $ sits naturally in $\PP^{q-1} \times \PP \big( \HH{d}{X}{\OO_X} \big)$, giving rise to a morphism
\begin{equation} \label{PF.over.Canon.eqn}
\PP(\FF) \lra \PP \big( \HH{d}{X}{\OO_X} \big) \, = \, \linser{\omega_X}.
\end{equation}

\begin{theoremalpha}\label{Intro.Normal.Cone.Thm}
With the notation just introduced, $\PP(\FF)$ is identified via the morphism  \eqref{PF.over.Canon.eqn}
with the projectivized normal cone to $\linser{\omega_X}$ inside $\tn{Div}^{\{ \omega \}}(X)$. 
\end{theoremalpha}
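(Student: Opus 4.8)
The plan is to realize both $\PP(\FF)$ and the projectivized normal cone as the single cup-product incidence correspondence
\[
\Sigma \ = \ \big\{\, ([v],[s]) \in \PP \times \linser{\omega_X} \ : \ v \cup s = 0 \ \tn{in} \ \HH{1}{X}{\omega_X} \,\big\},
\]
where $\cup$ denotes the cup product $\HH{1}{X}{\OO_X} \otimes \HH{0}{X}{\omega_X} \to \HH{1}{X}{\omega_X}$, and then to match the two projections. First I would establish the algebraic identification $\PP(\FF) = \Sigma$. Since $\FF$ is a quotient of $\OO_\PP \otimes \HH{d}{X}{\OO_X}$, its projectivization $\PP(\FF) = \tn{Proj}_\PP \tn{Sym}(\FF)$ embeds in $\PP \times \PP\big(\HH{d}{X}{\OO_X}\big) = \PP \times \linser{\omega_X}$. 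Over $[v] \in \PP$ the fibre of $\FF$ is $\tn{coker}\big(\cup v : \HH{d-1}{X}{\OO_X} \to \HH{d}{X}{\OO_X}\big)$; dualizing by Serre duality replaces this map by $\cup v : \HH{0}{X}{\omega_X} \to \HH{1}{X}{\omega_X}$, and the associativity of cup product against the Serre pairing shows that a point $[s] \in \linser{\omega_X}$ — i.e.\ a one-dimensional quotient of $\HH{d}{X}{\OO_X}$ — factors through $\FF_{[v]}$ exactly when $v \cup s = 0$. Hence $\PP(\FF) = \Sigma$ scheme-theoretically. By Theorem \ref{GV_cons}(ii) the sheaf $\FF$ is a vector bundle of rank $\chi(\omega_X)$, so the first projection exhibits $\Sigma = \PP(\FF) \to \PP$ as a projective bundle with fibres $\PP^{\chi(\omega_X)-1}$; in particular $\Sigma$ is smooth and irreducible of dimension $q + \chi(\omega_X) - 2$.

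Next I would turn to the normal cone. As $\linser{\omega_X}$ is the fibre $u^{-1}\big([\omega_X]\big)$, the ideal of $\linser{\omega_X}$ in $\tn{Div}^{\{\omega\}}(X)$ is pulled back from the maximal ideal of the smooth point $[\omega_X] \in \Picc^{\{\omega\}}(X)$, whose cotangent space is $\HH{1}{X}{\OO_X}^\vee$. The conormal sheaf is therefore a quotient of $\HH{1}{X}{\OO_X}^\vee \otimes \OO_{\linser{\omega_X}}$, so the normal cone is canonically a closed subcone of $\linser{\omega_X} \times \HH{1}{X}{\OO_X}$ and its projectivization sits in $\PP \times \linser{\omega_X}$, the same ambient as $\Sigma$. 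To place it there I would invoke the deformation theory of paracanonical divisors of \cite{gl1}, \cite{gl2}: at $D = \tn{div}(s) \in \linser{\omega_X}$ the Zariski tangent space to $\tn{Div}^{\{\omega\}}(X)$ is $\HH{0}{D}{\omega_X|_D}$, the differential of $u$ is the connecting homomorphism of $0 \to \OO_X \xrightarrow{\,\cdot s\,} \omega_X \to \omega_X|_D \to 0$, and its image is $\ker\big(\cup s : \HH{1}{X}{\OO_X} \to \HH{1}{X}{\omega_X}\big) = \{\, v : v \cup s = 0\,\}$. Thus the degree-one part of the local equations cutting out $\linser{\omega_X}$ inside $\tn{Div}^{\{\omega\}}(X)$ is precisely the bilinear form $v \cup s$; passing to initial forms in the normal coordinates $v$ shows that the projectivized normal cone is contained in $\Sigma$.

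The reverse inclusion is the main obstacle: one must rule out higher-order obstructions that could shrink the cone to a proper subscheme of $\Sigma$. Here I would combine a dimension count with the geometry forced by the hypothesis. The normal cone is pure of dimension $\dim \tn{Div}^{\{\omega\}}(X)$; because $X$ carries no irregular fibration, generic vanishing yields $h^0 = \chi(\omega_X)$ for the general line bundle algebraically equivalent to $\omega_X$, so $\dim \tn{Div}^{\{\omega\}}(X) = q + \chi(\omega_X) - 1$ and the projectivized normal cone has dimension $q + \chi(\omega_X) - 2 = \dim \Sigma$. A pure-dimensional subscheme of the irreducible variety $\Sigma$ of the same dimension must be supported on all of $\Sigma$; and since the general paracanonical divisor is a smooth — hence reduced — point of $\tn{Div}^{\{\omega\}}(X)$ (again by generic vanishing, which forces the $\HH{1}{X}{\,\cdot\,}$ of a general translate to vanish), the cone carries multiplicity one along $\Sigma$. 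Equivalently, and this is the conceptual heart, the analysis of \cite{gl1}, \cite{gl2} shows that under our hypothesis all higher obstructions to deforming sections vanish, so that near $\linser{\omega_X}$ the scheme $\tn{Div}^{\{\omega\}}(X)$ is cut out by equations linear in the normal coordinates $v$; such a germ coincides with its own normal cone, which is then visibly $\Sigma$. This gives $\PP\big(C_{\linser{\omega_X}} \tn{Div}^{\{\omega\}}(X)\big) = \Sigma = \PP(\FF)$ as subschemes of $\PP \times \linser{\omega_X}$, compatibly with the projections to $\linser{\omega_X}$.
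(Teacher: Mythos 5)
Your identification $\PP(\FF)=\Sigma$ as subschemes of $\PP\times\linser{\omega_X}$ is fine, and your first-order computation (the image of the differential of $u$ at $D=\tn{div}(s)$ being $\ker(\cup s)$) correctly reproduces the tangent-space analysis of \cite{gl1}. But there are genuine gaps beyond that point. First, the theorem is unconditional --- the paper stresses explicitly that no hypothesis on irregular fibrations is made --- whereas your argument uses that hypothesis essentially (local freeness of $\FF$, smoothness and irreducibility of $\Sigma$, the value of $\dim \tn{Div}^{\{ \omega \}}(X)$), so at best you prove a special case. Second, and more seriously, the reverse inclusion with the correct scheme structure is not established. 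The assertion that ``all higher obstructions to deforming sections vanish, so that near $\linser{\omega_X}$ the scheme $\tn{Div}^{\{ \omega \}}(X)$ is cut out by equations linear in the normal coordinates'' is essentially the statement to be proved, not something contained in \cite{gl1} or \cite{gl2}: those papers control tangent spaces and first-order data at individual points, not the formal structure of $\tn{Div}^{\{ \omega \}}(X)$ along the whole subvariety $\linser{\omega_X}$. Your fallback dimension count also breaks down. In the exorbitant case --- which does occur under your own hypothesis, e.g.\ surfaces without irrational pencils and $q$ even, as in the paper's final example --- $\linser{\omega_X}$ is itself an irreducible component of $\tn{Div}^{\{ \omega \}}(X)$ of dimension $q+\chi-2$, so $\tn{Div}^{\{ \omega \}}(X)$ is not pure-dimensional near $\linser{\omega_X}$, the cone does not dominate $\linser{\omega_X}$, and the purity claim fails. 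Finally, multiplicity one cannot be deduced from smoothness of $\tn{Div}^{\{ \omega \}}(X)$ at a general paracanonical divisor far from $\linser{\omega_X}$: a variety smooth at its general points can perfectly well have a non-reduced normal cone along a subvariety (already a cuspidal curve and its singular point show this), so reducedness of the cone needs an argument about the local rings along $\linser{\omega_X}$, which you do not supply.

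What is missing is exactly what the paper's proof supplies: a scheme-theoretic description of $\tn{Div}^{\{ \omega \}}(X)$ itself. The paper first proves (Proposition \ref{Divv}, via Grothendieck duality for the Fourier--Mukai transform, valid in the K\"ahler setting by \cite{rrv}) that $\tn{Div}^{\{ \omega \}}(X) = \PP\big( (-1)^* R^d p_{2*}\mathcal{P}\big)$ over $\Picc^0(X)$; it then uses the main theorem of \cite{gl2} to identify $R^d p_{2*}\mathcal{P}$, after pulling back by the \'etale exponential map, with the cokernel of the matrix of linear forms coming from the right end of the BGG complex; and it concludes by a general lemma computing the projectivized normal cone of $\PP(\tn{coker})$ along its central fibre for an arbitrary matrix of linear forms, proved by interchanging the roles of the two vector spaces and invoking the blow-up description of projectivized cones from \cite[8.7]{EGAII}. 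That lemma is the rigorous replacement for your ``linearity of the local equations'' claim, and since it needs no positivity or genericity input, the resulting proof is unconditional.
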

\noi Note that we do not assume here that $X$ carries no irregular fibrations. 
When this additional hypothesis does hold, Theorem \ref{Intro.Normal.Cone.Thm} implies  the amusing fact that whether or not the projective space $|\omega_X|$ is an irreducible component of $ \tn{Div}^{\{ \omega \}}(X)$ depends in most cases only on the Hodge numbers $h^{d,j}(X)$ (Proposition \ref{Exorbitant.Numerical}).

We conclude this Introduction with a few remarks about related work. The sheafified complex $\underline{\bf{L}}_X$ came up in passing in \cite{el} and \cite{hp}, but it has not up to now been exploited in a systematic fashion. Going back to Theorem \ref{Intro.Reg.Thm}, we note that a statement of a similar type was established in \cite{epy} for the singular cohomology of the complement of an affine complex hyperplane arrangement: in fact this cohomology always has a linear resolution over the 
exterior algebra on its first cohomology (though it is not generated in degree zero). However whereas the result of \cite{epy} is of combinatorial genesis and does not involve using the BGG correspondence, as explained above Theorem \ref{Intro.Reg.Thm} is ultimately based on translating Hodge-theoretic information via BGG. We note also that Catanese suggested in \cite{Catan} that it might be interesting to study the BGG correspondence for holomorphic cohomology algebras. In another direction, Lombardi   \cite{lombardi} has extended the approach of the present paper to deal with Hodge groups $\HH{q}{X}{\Omega_X^p}$ with $p,q  \ne 0,d$: see Remark \ref{lombardi} for a brief description of some of his results. 

Concerning the organization of the paper, in \S 1 we prove the main technical result Theorem  \ref{GV_cons}. The connection with BGG is discussed in \S 2. Applications to numerical inequalities occupy \S3, where we also give a number of examples and variants. Finally, Theorem \ref{Intro.Normal.Cone.Thm}
 appears in \S 4.

\medskip

\noindent
{\bf Acknowledgements.} 
The paper owes a lot to discussions with G. Pareschi -- some of the material we discuss would not have existed without his input.
We thank F. Catanese,  I. Coand\u a, Ch. Hacon, M. Mendes Lopes, Ch. Schnell, F.-O. Schreyer and C. Voisin for various comments and suggestions. We are also grateful to one of the referees of an earlier version of this paper for showing us the statement and proof of inequality \eqref{referee} and an alternative proof of Proposition \ref{BGG.Non.Exactness.Prop}, and for allowing us to reproduce them here.


\section{Proof of Theorem \ref{GV_cons}}

This section is devoted to the proof of the main technical result, giving the exactness of the BGG complexes. The argument proceeds in the form of three propositions. We keep notation as in the Introduction: $X$ is a compact K\"ahler manifold of dimension $d$,  
$\alb_X : X \lra \Alb(X)$ is the Albanese mapping, and
\[ k \ = \ k(X) \ = \ \dim X \,  - \, \dim \alb_X(X)  \]
is the dimension of the generic fibre of $\alb_X$. 
As before, $\PP = \PPP\big( \HH{1}{X}{\OO_X}\big)$ is the projective space of lines in $\HH{1}{X}{\OO_X}$, and $\mathbf{L}_X$ and $\underline{\mathbf{L}}_X$ are the complexes appearing in \eqref {BGG.Sheaf.Cx.Intro} and \eqref{BGG.Complex.Intro}.

\begin{proposition} \label{BGG.Exactness.Prop}
The complexes $\mathbf{L}_X$ and $\underline{\mathbf{L}}_X$ are exact in the first $d-k$ terms from the left.
\end{proposition}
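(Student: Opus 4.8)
The plan is to establish exactness first for the locally free complex $\underline{\mathbf L}_X$ of sheaves on $\PP$; the statement for the $S$-module complex $\mathbf L_X$ then follows in the same range by taking sections of all twists, a comparison that is part of the BGG dictionary reviewed in \S 2. The starting observation is that, up to the harmless line-bundle twists, the fibre of $\underline{\mathbf L}_X$ over a point $[v]\in\PP$ is the derivative complex
\[
\HH{0}{X}{\OO_X}\xrightarrow{\,\cup v\,}\HH{1}{X}{\OO_X}\xrightarrow{\,\cup v\,}\cdots\xrightarrow{\,\cup v\,}\HH{d}{X}{\OO_X},
\]
so the proposition amounts to the assertion that the cohomology sheaves $\mathcal{H}^i(\underline{\mathbf L}_X)$ vanish for $i<d-k$.

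Two of the needed inputs are immediate. At the extreme left the differential sends (fibrewise) $1\mapsto v$, so it is injective for every $v\neq 0$ and $\mathcal{H}^0=0$; since $X$ is irregular we have $d-k=\dim\alb_X(X)\ge 1$, so this is exactly the first required vanishing. For the intermediate positions I would invoke the Green--Lazarsfeld theorem on the derivative complex: for a general $v$ the complex above is exact at the $i$-th spot whenever $i<\dim\alb_X(X)=d-k$. Geometrically this reflects the generic vanishing bound $\codim V^i(\OO_X)\ge(d-k)-i$ on the cohomology support loci $V^i(\OO_X)=\{\,\alpha\in\Picc^0(X)\ :\ \HH{i}{X}{\alpha}\neq0\,\}$, which is strictly positive for $i<d-k$. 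Either way, each $\mathcal{H}^i(\underline{\mathbf L}_X)$ with $i<d-k$ is then supported on a proper closed subset of $\PP$, hence is a torsion sheaf.

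The main obstacle is upgrading this generic exactness to genuine exactness, that is, showing the torsion sheaves $\mathcal{H}^i(\underline{\mathbf L}_X)$, $i<d-k$, actually vanish. The difficulty is not formal: over the points $[v]$ lying on the projectivized tangent cones to the $V^i$ at the origin the differential $\cup v$ drops rank, so the images $\operatorname{im}(\cup v)$ need not be subbundles and fibrewise exactness can genuinely fail there. To overcome this I would exploit the global object behind the complex, namely that $\underline{\mathbf L}_X$ computes, locally near the origin, the derived pushforward $\RR\pi_*\mathcal{P}$ of the Poincar\'e bundle $\mathcal P$ under $\pi\colon X\times\Picc^0(X)\lra\Picc^0(X)$, whose cohomology sheaves $R^i\pi_*\mathcal P$ have supports the loci $V^i(\OO_X)$. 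The full strength of the generic vanishing package (Hacon, Pareschi--Popa) bounds $\codim\operatorname{Supp}R^i\pi_*\mathcal P$ by $(d-k)-i$, and the essential point — which I expect to be the crux, and the step genuinely requiring the Fourier--Mukai machinery of the cited papers — is that this codimension estimate forces the associated linear complex to be exact, not merely generically exact, in homological degrees below $d-k$. Reducing the vanishing of the $\mathcal{H}^i(\underline{\mathbf L}_X)$ to this structural consequence of generic vanishing would complete the proof.
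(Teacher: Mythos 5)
Your proposal runs on the same inputs as the paper's proof --- the Green--Lazarsfeld derivative complex and the vanishing $R^i p_{2*}\mathcal{P} = 0$ for $i < d-k$ (Hacon, Pareschi, \cite{PP2} Theorem C) --- and you correctly identify that vanishing as the crux to be imported from the literature. But two of your reductions have genuine gaps. The first is your opening move: you plan to prove exactness of $\underline{\mathbf{L}}_X$ and then deduce exactness of $\mathbf{L}_X$ "by taking sections of all twists," calling this part of the BGG dictionary. It is not: taking global sections of twists is only left exact, so sheaf-level exactness does not formally imply module-level exactness. Indeed, the paper's remark following Proposition \ref{BGG.Non.Exactness.Prop} is a counterexample to the implication you invoke: for $X = A \times \PP^k$ the sheafified complex is a Koszul complex on $\PP^{d-k-1}$ and is everywhere exact, while $\mathbf{L}_X$ fails to be exact at the term $S \otimes_{\CC} \HH{d-k}{X}{\OO_X}$. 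In your range $i < d-k$ the implication can be repaired --- one chases through the kernel/image sheaves using $\HH{j}{\PP^{q-1}}{\OO_{\PP^{q-1}}(m)} = 0$ for $0 < j < q-1$, and every position $i < d-k \le q$ stays in the safe range --- but that argument must be supplied and is exactly what your gloss omits. The paper sidesteps the issue by arguing in the opposite, and trivial, direction: it proves exactness of $\mathbf{L}_X$ first, realizing it as the global sections of a complex $\mathcal{K}^\bullet$ of trivial bundles on the affine space $\AAA = \tn{Spec}(S)$, where exactness of quasi-coherent sheaves and of their modules of sections coincide; exactness of $\underline{\mathbf{L}}_X$ then follows for free because sheafification is exact.

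The second gap is the bridge to the Poincar\'e bundle. What \cite{gl2}, Theorem 3.2 gives is an identification of analytic stalks \emph{at the origin}: $\mathcal{H}^i$ of the derivative complex on $V = \HH{1}{X}{\OO_X}$ has stalk at $0$ isomorphic to $(R^i p_{2*}\mathcal{P})_0$. Your complex $\underline{\mathbf{L}}_X$ lives on $\PP$, which has no origin, so it does not literally "compute $\RR p_{2*}\mathcal{P}$ locally near the origin"; as it stands, the generic vanishing input only kills the cohomology of the derivative complex in a neighborhood of $0 \in V$. To promote this to vanishing everywhere --- equivalently, to the vanishing of your torsion sheaves $\mathcal{H}^i(\underline{\mathbf{L}}_X)$ on all of $\PP$, including along the projectivized tangent cones to the $V^i$ that you rightly flag as the danger zone --- the paper uses that the differential of $\mathcal{K}^\bullet$ scales linearly in radial directions through the origin, so its cohomology sheaves are cones and vanishing near $0$ propagates to all of $V$ and hence descends to $\PP$. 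This homogeneity argument is short but indispensable; without it your outline establishes torsion-ness and a stalk statement at a point that is not even in your ambient space, and stops short of the conclusion.
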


\begin{proof}
It is sufficient to prove the exactness for $\mathbf{L}_X$, as this implies the corresponding statement for its sheafified sibling.  The plan for this is to relate  $\L_X$ to the \emph{derivative complex} introduced and studied in \cite{gl2}.

Write $V = \HH{1}{X}{\OO_X}$ and $W = V^*$, and let $\AAA = \tn{Spec(Sym}(W\tn{))}$ be the affine space corresponding to $V$, viewed as an algebraic variety. Thus a point in $\AAA$ is the same as a vector in $V$. Then there is a natural complex $\mathcal{K}^{\bullet}$ of trivial algebraic vector bundles on $\AAA$:
\[
0 \lra \OO_\AAA \otimes \HH{0}{X}{\OO_X} \lra \OO_\AAA \otimes \HH{1}{X}{\OO_X} \lra \ldots \lra \OO_\AAA \otimes \HH{d}{X}{\OO_X} \lra 0,
\]
with maps given at each point of $\AAA$  by wedging with the corresponding element of $V = \HH{1}{X}{\OO_X}$. Recalling that $\Hog{\AAA}{\OO_{\AAA}} = S$, one sees that  $\mathbf{L}_X  = \Hog{\AAA}{\mathcal{K}^{\bullet}}$ is
the complex obtained by taking global sections in $\mathcal{K}^{\bullet}$. As $\AAA$ is affine, to prove the stated exactness properties of $\mathbf{L}_X$, it is equivalent to establish the analogous exactness for the complex $\mathcal{K}^{\bullet}$, i.e. we need to show the vanishings 
$ \mathcal{H}^i({\mathcal{K}^{\bullet}}) = 0 $ of the cohomology sheaves of this complex in the range $i < d-k$. 
For this it is in turn equivalent to prove the vanishing 
\begin{equation}
\mathcal{H}^i\big({\mathcal{K}^{\bullet}}\big)_0 \ = \ 0 \tag{*}
 \end{equation}
 of the stalks at the origin of these homology sheaves in the same range $i < d-k$.  Indeed, (*) implies that $\mathcal{H}^i\big({\mathcal{K}^{\bullet}}\big) = 0$ in a neighborhood of the origin. But the differential of $\mathcal{K}^{\bullet}$ scales linearly in radial directions through the 
origin, so we deduce the corresponding vanishing on all of $\AAA$. 

Now let $\VVV$ be the vector space $V$, considered as a complex manifold, so that $\VVV = \CC^q$, where $q = \hh{1}{X}{\OO_X}$ is the irregularity of $X$.  Then on $\VVV$ we can form as above a complex $(\mathcal{K}^{\bullet})^{an}$\[
0 \lra \OO_\VVV \otimes \HH{0}{X}{\OO_X} \lra \OO_\VVV \otimes \HH{1}{X}{\OO_X} \lra \ldots \lra \OO_\VVV \otimes \HH{d}{X}{\OO_X} \lra 0,
\]
of coherent analytic sheaves, which is just the complex of analytic sheaves determined by the algebraic complex $\mathcal{K}^{\bullet}$. This analytic complex was studied in \cite{gl2}, where it was called the \textit{derivative complex} $D_{\OO_X}^{\bullet}$ of $\OO_X$. Since passing from a coherent  algebraic to a coherent analytic sheaf is an exact functor (cf.\ \cite[3.10]{GAGA}), one has 
$\mathcal{H}^i(({\mathcal{K}^{\bullet}})^{an} )= \mathcal{H}^i(({\mathcal{K}^{\bullet}}) )^{an} $. So it is equivalent for (*) to prove:  
\begin{equation}
\mathcal{H}^i\big(({\mathcal{K}^{\bullet}})^{an}\big)_0 \ = \ 0  \ \ \text{for} \ i < d-k. \tag{**}
 \end{equation}
But this will follow immediately from a body of results surrounding generic vanishing theorems.
 
 Specifically, write $\Picc^0(X) = V/ \Lambda$, let $\P$ be a normalized Poincar\'e line bundle on $X \times \Pic0$,  
and write 
\[ p_1 : X \times \Picc^0(X) \lra X \ \ , \ \ p_2 : X \times \Picc^0(X) \lra \Picc^0(X)\] for the two projections. The main result of \cite{gl2}, Theorem 3.2, says that via the exponential map ${\rm exp}: V \rightarrow \Pic0$ we have the identification of the analytic 
stalks at the origin
\begin{equation}\label{gl_main}
\mathcal{H}^i\big(({\mathcal{K}^{\bullet}})^{an}\big)_0  \  \cong  \ (R^i {p_2}_* \P)_0. \tag{***}
\end{equation}
On the other hand, by \cite{PP2} Theorem C, we have $R^i {p_2}_* \P = 0$ for $i < d-k$, where $p_2$ is the projection onto the second factor.\footnote{This was posed as a problem in \cite{gl2}, and first answered in the smooth projective case by Hacon \cite{hacon} and Pareschi \cite{pareschi}. In \cite{PP2} it was simply shown that the result is equivalent to the Generic Vanishing Theorem of \cite{gl1}, hence it holds also in the compact K\"ahler case.} In view of (***), this gives (**), and we are done.\end{proof}

The next point is the non-exactness of $\mathbf{L}_X$ beyond the range specified in the previous proposition.

\begin{proposition} \label{BGG.Non.Exactness.Prop}
The complex $\mathbf{L}_X$ is not exact at the term $S \otimes_{\CC} \HH{d-k}{X}{\OO_X}$.
\end{proposition}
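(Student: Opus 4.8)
The plan is to reduce the statement, exactly as in the proof of Proposition \ref{BGG.Exactness.Prop}, to the non-vanishing of a single higher direct image at the origin, and then to produce that non-vanishing geometrically. Since $\AAA$ is affine and global sections are exact there, $\mathbf{L}_X$ fails to be exact at $S\otimes_\CC\HH{d-k}{X}{\OO_X}$ precisely when the cohomology sheaf $\mathcal{H}^{d-k}(\mathcal{K}^\bullet)$ is non-zero. The differential of $\mathcal{K}^\bullet$ scales linearly through the origin, so this homology sheaf has conical support; hence it is non-zero if and only if its stalk at the origin is non-zero. Passing to the analytic category by GAGA and invoking the Green--Lazarsfeld identification \eqref{gl_main}, I am reduced to proving
\[ \big(R^{d-k}{p_2}_*\P\big)_0 \ \neq\ 0. \]

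The first point is that the fibre cohomology is non-zero: one has $h^{d-k}(X,\OO_X)=h^{0,d-k}=h^0\big(X,\Omega_X^{d-k}\big)\neq 0$, because $\dim\alb_X(X)=d-k$ forces $d-k$ of the global holomorphic $1$-forms on $X$ (all pulled back from $\Alb(X)$) to be generically independent, so their wedge is a non-zero holomorphic $(d-k)$-form. \emph{This alone is not enough}, and isolating the genuine difficulty is the heart of the matter. Indeed $d-k$ is the \emph{lowest} index for which $R^i{p_2}_*\P$ is permitted to be non-zero: by \cite{PP2} Theorem C it vanishes identically below $d-k$, while at the special point $0\in\Picc^0(X)$ the groups $\HH{i}{X}{\OO_X}$ jump upward. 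Consequently the naive base-change map need not be an isomorphism, and a priori the class in $\HH{d-k}{X}{\OO_X}$ could be entirely absorbed into $\mathrm{Tor}_1\big(R^{d-k+1}{p_2}_*\P,\CC\big)$ --- exactly as happens for $R^0{p_2}_*\P$, whose stalk at the origin vanishes even though $\HH{0}{X}{\OO_X}\neq 0$. So the non-vanishing of the fibre cannot be invoked directly.

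To get around this I would factor through the Albanese map $f=\alb_X\colon X\to \Alb(X)$. Writing the Poincar\'e bundle as a pullback under $f\times\mathrm{id}$ and applying the projection formula, $R{p_2}_*\P$ becomes the Fourier--Mukai transform of $Rf_*\OO_X$ along the projection $\Alb(X)\times\Picc^0(X)\to\Picc^0(X)$. The sheaves $R^tf_*\OO_X$ are supported on the $(d-k)$-dimensional image $Z=f(X)$, so their transforms vanish above cohomological degree $d-k$; in the spectral sequence computing the transform of $Rf_*\OO_X$, the edge term in total degree $d-k$ arising from $f_*\OO_X$ is a \emph{top}-degree transform, for which base change is surjective, and its fibre at the origin surjects onto the top coherent cohomology $\HH{d-k}{Z}{\OO_Z}$. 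The latter is non-zero since $Z$ has maximal Albanese dimension, being a $(d-k)$-dimensional subvariety of an abelian variety. The main obstacle is then to show that this edge contribution survives to $E_\infty$ near the origin: it can only be hit by incoming differentials, and one must rule out that these cancel it. This is the delicate, genuinely generic-vanishing-theoretic step --- presumably where the alternative argument kindly supplied by the referee enters --- and granting survival yields $\big(R^{d-k}{p_2}_*\P\big)_0\neq 0$, which completes the reduction.
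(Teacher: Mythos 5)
Your reduction is sound as far as it goes: the equivalence of the Proposition with $\big(R^{d-k}{p_2}_*\P\big)_0 \neq 0$ is indeed valid (the paper records exactly this equivalence in the remark following the Proposition, as a converse to generic vanishing), and you are right that the mere non-vanishing of the fibre $\HH{d-k}{X}{\OO_X}$ proves nothing, since base change can fail at the origin --- just as it does for $R^0{p_2}_*\P$. But the proposal stops precisely where the proof must begin: the survival of the edge term at $E_\infty$ is never established; you explicitly ``grant'' it. That degeneration is not a routine verification --- it is the real content of the statement, and it is supplied by Koll\'ar's theorem on higher direct images of dualizing sheaves \cite{kollar2} (extended to the K\"ahler case in \cite{saito}, \cite{takegoshi}), which splits the relevant complex in the derived category and thereby kills all incoming differentials. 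This is exactly how the paper's second argument (Remark \ref{Streamlined.Non.Exactness}) runs: the splitting $\RR a_*\omega_X \cong \oplus_j R^ja_*\omega_X[-j]$ gives a decomposition of $E$-modules $Q_X = \oplus_j Q^j[j]$ with $Q^j = \HH{*}{A}{R^ja_*\omega_X}$; since each $R^ja_*\omega_X$ is supported on the $(d-k)$-dimensional Albanese image, one gets $\HH{d}{X}{\omega_X} = \HH{d-k}{A}{R^ka_*\omega_X} \neq 0$, so $Q_X$ has a generator in degree $\le -k$, i.e.\ is not $(k-1)$-regular, which by Proposition \ref{regularity} is the asserted non-exactness. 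Without invoking such a splitting (or an equivalent strictness statement), your spectral-sequence argument is incomplete at its crucial step. A secondary unjustified point: you claim $\HH{d-k}{Z}{\OO_Z} \neq 0$ for $Z = \alb_X(X)$, but $Z$ may be singular, so the Serre-duality argument you have in mind does not apply off the shelf.

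You should also know that the paper's primary proof is far more elementary and bypasses direct images entirely. Using birational invariance of the relevant cohomology, one chooses a surjection $f : X \to Y$ onto a smooth compact K\"ahler manifold $Y$ of dimension $d-k$ that is generically finite over its image in $\Alb(X)$, with $f^*$ an isomorphism on $H^1(\OO)$. Then $H^{d-k}(Y,\OO_Y) \neq 0$ by Serre duality (here $Y$ is smooth of maximal Albanese dimension, so $H^0(Y,\omega_Y)\neq 0$), the induced map $f^* : H^{d-k}(Y,\OO_Y) \hookrightarrow H^{d-k}(X,\OO_X)$ is injective, and for $0 \neq \alpha \in H^{d-k}(Y,\OO_Y)$ the element $1 \otimes f^*\alpha$ is a cycle in $\mathbf{L}_X$ (it is one in $\mathbf{L}_Y$, where it sits in the last term) which cannot be a boundary because it lives in $S$-degree $0$. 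This exhibits a non-zero homology class directly, with no appeal to generic vanishing, base change, or Koll\'ar-type theorems; if you want to repair your approach rather than adopt this one, the missing ingredient you must import is precisely the Koll\'ar splitting.
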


\begin{remark}
The analogous statement for the sheafified complex $\underline{\mathbf{L}}_X$ can fail. For example, if $X = A \times \PP^k$, with $A$ an abelian variety of dimension $d-k$, then $\underline{\mathbf{L}}_X$ -- which in this case is just a Koszul complex on $\PP^{d-k-1}$ -- is everywhere exact. 
\end{remark}

\begin{proof} [Proof of Proposition \ref{BGG.Non.Exactness.Prop}]\footnote{This proof was indicated to us by one of the referees; it simplifies considerably our original argument.
One can also deduce the statement directly from a theorem of Koll\'ar by passing through the BGG correspondence: see Remark \ref{Streamlined.Non-Exactness.Rmk} in the next section.}
As the cohomology groups involved in the construction of $\L_X$ are birationally invariant, one can assume that there is a surjective 
morphism $f: X\rightarrow Y$, with $Y$ a compact K\"ahler manifold of dimension $d-k$, such that $f^* : H^1 (Y, \OO_Y) \overset{\simeq}{\longrightarrow} H^1 (X, \OO_X)$ and that there is a map $Y\rightarrow {\rm Alb}(X)$, generically finite onto its image.
Noting that a surjection between compact K\"ahler manifolds $f$ induces injective maps 
$$f^* : H^i (Y, \OO_Y) \hookrightarrow H^i (X, \OO_X) {\rm~ for~ all~} i,$$ 
we obtain an inclusion of complexes $\L_Y \hookrightarrow \L_X$. Now the rightmost 
term in $\L_Y$ is $S\otimes_{\CC} H^{d-k} (Y, \OO_Y)$. By Serre duality this is non-zero, since $H^0 (Y, \omega_Y) \neq 0$ by virtue of $Y$ being of maximal Albanese dimension. If $0\neq \alpha \in H^{d-k} (Y, \OO_Y)$ then obviously $d_Y (1\otimes \alpha) = 0$, hence also
$d_X (1\otimes f^*\alpha) = 0$, where $d_Y$ and $d_X$ are the differentials of $\L_Y$ and $\L_X$ respectively. But $f^*\alpha \neq 0$, and $1\otimes f^*\alpha$ cannot be in the image of $d_X$, since it has degree $0$ with respect to the $S$-grading.
\end{proof}

\begin{remark}[Converse to the Generic Vanishing theorem \cite{gl1}]
Arguing as in the proof of Proposition \ref{BGG.Exactness.Prop}, the conclusion of Proposition \ref{BGG.Non.Exactness.Prop} is equivalent to the fact that $(R^{d-k} {p_2}_* \P)_0 \neq  0$. According to \cite{PP2} Theorem 2.2, this is in turn equivalent to the fact that around the origin, the cohomological support loci of $\omega_X$ (see Proposition \ref{basic} below) 
satisfy
$${\rm codim}_0 V^i (\omega_X) \ \ge \ i - k \ {\rm~ for~ all ~} i> 0.$$
Therefore this last condition becomes equivalent to $\dim a(X) \le k$, which is a converse to the main result of \cite{gl1} 
(in a strong sense, as it has the interesting consequence that the behavior of the $V^i (\omega_X)$ is dictated by their behavior 
around the origin).
\end{remark}

Finally, we record a criterion to guarantee that the BGG sheaf $\FF$ on the projective space $\PP$  is locally free.  Recall that an \textit{irregular fibration} of $X$ is a surjective morphism $f : X \lra Y$ with connected fibres from $X$ onto a normal variety $Y$ with $0 < \dim Y < \dim X$ having the property that a smooth model of $Y$ has maximal Albanese dimension.  
 \begin{proposition}\label{basic}
 \begin{enumerate}
 \item[(i).]
 If $X$ has maximal Albanese dimension, then $\underline{\bf L}_X$ is a resolution of $\FF$. 
 \vskip 3pt
 \item[(ii).] Suppose that  $0 \in \Picc^0(X)$ is an isolated point of the cohomological support loci
 \[  
V^i(\omega_X) \ =_{\tn{def}} \big \{ \alpha \in \Picc^0(X) \, \mid \, \HH{i}{X}{\omega_X \otimes \alpha} \ne 0 \, \big \}
\]  for every  $i >0$.
 Then $\FF$ is a vector bundle on $\PP$, with
 $ \tn{rk}(\FF)= \chi (\omega_X)$. \vskip 3pt
\item[(iii).] The hypothesis of \tn{(}ii\tn{)} holds in particular  if $X$ does not carry any irregular fibrations.  \end{enumerate}
 \end{proposition}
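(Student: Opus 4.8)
The plan is to handle the three assertions in order, deducing (i) directly from Proposition~\ref{BGG.Exactness.Prop}, reducing (ii) to a fibrewise exactness statement for $\underline{\mathbf{L}}_X$, and obtaining (iii) from the structure theory of the cohomological support loci. For (i), maximal Albanese dimension means exactly $k=0$, so Proposition~\ref{BGG.Exactness.Prop} asserts that $\underline{\mathbf{L}}_X$ is exact in its first $d$ terms from the left, i.e.\ at every spot except the last. Splicing in the cokernel $\FF$ of the final map and recalling the defining sequence \eqref{BGG.Sheaf.Eqn.Intro}, the augmented complex $0 \to \OO_\PP(-d)\otimes \HH{0}{X}{\OO_X} \to \cdots \to \OO_\PP\otimes \HH{d}{X}{\OO_X} \to \FF \to 0$ is exact throughout, which is precisely the statement that $\underline{\mathbf{L}}_X$ is a locally free resolution of $\FF$.

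For (ii), I would reduce local freeness to a fibrewise computation. Restricting the complex of vector bundles $\underline{\mathbf{L}}_X$ to the fibre over a point $[v]\in\PP$ (given by a nonzero $v\in\HH{1}{X}{\OO_X}$) produces the cup-product complex
\[ 0 \to \HH{0}{X}{\OO_X} \xrightarrow{\cup v} \HH{1}{X}{\OO_X} \xrightarrow{\cup v} \cdots \xrightarrow{\cup v} \HH{d}{X}{\OO_X} \to 0, \]
whose cokernel at the last spot is the fibre $\FF\otimes\kappa([v])$. By the standard semicontinuity behaviour of complexes of vector bundles, if this fibre complex is exact at every spot below the top for all $v\ne 0$, then $\underline{\mathbf{L}}_X$ is exact in those spots as a complex of sheaves — so that it resolves $\FF$ — and the cokernel $\FF$ is locally free, necessarily of rank equal to the fibrewise Euler characteristic $\sum_i (-1)^{d-i}\hh{i}{X}{\OO_X} = (-1)^d\chi(\OO_X) = \chi(\omega_X)$, as asserted.

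The heart of the matter — and the step I expect to be the main obstacle — is to show that the hypothesis forces this fibrewise exactness. This is exactly the deformation-theoretic input of Green--Lazarsfeld \cite{gl1}, \cite{gl2}: the failure of the cup-product complex to be exact at a given spot in the direction $v$ is detected by $v$ lying in the tangent cone at the origin to one of the loci $V^i(\omega_X)$. Concretely, if $0$ is an isolated point of $V^i(\omega_X)$ for every $i>0$, then the tangent cone at $0$ to each such locus is trivial, so no nonzero $v$ can obstruct exactness below the top degree; this yields the fibrewise exactness needed above and hence the local freeness and rank computation of (ii). (Alternatively one can argue sheaf-theoretically through the identification \eqref{gl_main} together with the results of \cite{PP2}, which is closer to the method used for Proposition~\ref{BGG.Exactness.Prop}.)

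Finally, for (iii) I would appeal to the structure theory of the $V^i(\omega_X)$. By the results of \cite{gl1} and \cite{el}, every positive-dimensional irreducible component of $V^i(\omega_X)$ through the origin is a subtorus of $\Picc^0(X)$ arising, up to isogeny, as the pullback of $\Picc^0$ of the base of an irregular fibration of $X$ (note that the absence of such fibrations already forces $X$ to have maximal Albanese dimension, since otherwise the Albanese map itself would furnish one). Hence if $X$ admits no irregular fibration, no such component can exist for any $i>0$, so the origin is an isolated point of each $V^i(\omega_X)$ with $i>0$ — precisely the hypothesis of (ii) — completing the chain of implications.
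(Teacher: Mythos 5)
Your proof is correct, and in outline it is the paper's: part (i) is obtained identically, as the case $k=0$ of Proposition \ref{BGG.Exactness.Prop}, and part (iii) rests on the same structure theorem for the loci $V^i(\omega_X)$ --- note, though, that the statement you invoke (positive-dimensional components through the origin are subtori induced by irregular fibrations) is \cite{gl2}, Theorem 0.1, which is what the paper cites; it is not in \cite{gl1} or \cite{el}. The one real difference is the mechanism in part (ii). The paper argues through direct images: $V^i(\omega_X)$ contains the support of $R^i p_{2*}\mathcal{P}$, so under the hypothesis these direct images are supported at isolated points, and via the identification of the derivative complex with $R^i p_{2*}\mathcal{P}$ and the radial-scaling (cone) argument this forces the bundle maps in $\underline{\mathbf{L}}_X$ to have constant rank everywhere; the paper cites \cite{el} and \cite{hp}, Proposition 2.11, for exactly that step. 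You instead work fibrewise, invoking the Green--Lazarsfeld/Ein--Lazarsfeld deformation result: non-exactness of the cup-product complex at an interior spot $i$ in the direction $v$ forces the punctured line through $v$ to lie in $V^i(\OO_X) = -V^{d-i}(\omega_X)$ near the origin, hence $v$ lies in its tangent cone, contradicting isolatedness. This is precisely the content of the results the paper outsources to its citations, so the two proofs are one level of unpacking apart; what your phrasing buys is that it visibly uses only the local hypothesis that $0$ is an isolated point (the paper's own wording speaks of the $V^i$ being finite, which is superficially stronger than the stated hypothesis), and it makes explicit the elementary step that fibrewise exactness below the top forces constant ranks, hence local freeness of the cokernel and the rank count $(-1)^d\chi(\OO_X)=\chi(\omega_X)$. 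Both of those reductions in your write-up are sound.
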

\begin{proof}
The first statement is the case $k = 0$ of Proposition \ref{BGG.Exactness.Prop}, and (iii) follows from   \cite{gl2}, Theorem 0.1. In general, $V^i(\omega_X)$ contains the support of the direct image $R^i p_{2*} \mathcal{P}$. Hence if  the $V^i(\omega_X)$ are finite for $i > 0$,  then  the corresponding direct images   are supported at only finitely many points, and this implies that the vector bundle maps appearing in $\underline{\bf L}_X$ are everywhere of constant rank. (Compare \cite{el} or \cite{hp}, Proposition 2.11.) Thus $\FF$ is locally free, and its rank is computed from   $\underline{\mathbf{L}}_X$. \end{proof}

\begin{remark} \label{SupportLociFinite}
We note for later reference that the main result of \cite{gl2} asserts more generally that if $X$ doesn't admit any irregular fibrations, then in fact $V^i(\omega_X)$ is finite for every $i > 0$. 
\end{remark}

\section{BGG and the canonical cohomology module}

In this section we apply main technical result Theorem \ref{GV_cons} to study the regularity of the canonical module $Q_X$. We also discuss a variant involving a twisted BGG complex. 
    
We start by briefly recalling from \cite{efs} and \cite{eisenbud} some basic facts concerning the BGG correspondence.
Let $V$ be a $q$-dimensional complex vector space,\footnote{The BGG correspondence works over any field, but in the interests of unity we stick throughout to $\CC$.} and let 
$E = \oplus_{i=0}^d \bigwedge^i V$ be the exterior algebra over $V$. Denote by 
$W = V^\vee$ be the dual vector space, and by $S = {\rm Sym}(W)$ the symmetric algebra over $W$.       Elements of $W$ are taken to have degree $1$, while those in $V$ have degree $-1$.

Consider now a finitely generated graded module  $P = \oplus_{i=0}^dP_i$  over $E$. The \emph{dual} over $E$ of the module $P$ is defined to be the $E$-module 
\[ Q\ = \  \widehat{P} \ = \ \oplus_{j=0}^d P_{-j}^\vee \] (so that positive degrees are switched to negative ones and vice versa).  The basic idea of the BGG correspondence is that the properties of $Q$ as an $E$-module are controlled by a linear complex of $S$-modules constructed  from $P$. Specifically, one considers the complex $\L(P)$ given by
$$\ldots \lra S \otimes_{\CC} P_{j+1} \lra S\otimes_{\CC} P_j \longrightarrow S\otimes_{\CC} P_{j-1}\lra \ldots$$ 
with differential induced by 
$$s\otimes p \mapsto \sum_i x_i s \otimes e_i p,$$ 
where $x_i \in W$ and $e_i \in V$ are dual bases. 
We refer to \cite{efs} or \cite{eisenbud} for a dictionary linking $\L(P)$ and $Q$. 

It is natural to consider a notion of regularity for $E$-modules analogous to the theory of Castelnuovo-Mumford regularity for finitely generated $S$-modules.  We limit ourselves here to  modules concentrated in non-positive degrees. 
\begin{definition} [Regularity] \label{regular} 
A finitely generated graded $E$-module $Q$ with no component of positive degree is called \emph{$m$-regular} if it is generated in degrees $0$ up to $-m$, and if its minimal free resolution has at most $m+1$ 
linear strands. Equivalently, $Q$   is $m$-regular if 
and only if 
\[ {\rm Tor}_i^E (Q, \CC)_{-i-j} \ = \ 0\]  for all $i \ge 0$ and all $j \ge m+1$.
\end{definition}
 \noi As an immediate application of the results of Eisenbud-Fl\o ystad-Schreyer, one has  the following addendum 
to \cite{efs} Corollary 2.5 (cf. also \cite{eisenbud} Theorems 7.7, 7.8), suggested to us by F.-O. Schreyer.

\begin{proposition}\label{regularity}
Let $P$ be a  finitely generated graded module over $E$ with no component of negative degree, say $P = \oplus_{i=0}^d P_i$.   Then $Q = \widehat{P}$ is $m$-regular if and only if $\L (P)$ is exact at the first $d-m$ steps from the left, i.e. if and only if the sequence
\[
0 \lra S \otimes_{\CC} P_{d} \lra S\otimes_{\CC} P_{d-1}\longrightarrow \ldots \lra S \otimes_{\CC} P_{m}\]
of $S$-modules is exact.  \qed
\end{proposition}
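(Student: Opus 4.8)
The plan is to deduce the statement from the Tor-characterization of regularity recorded in Definition \ref{regular}, by identifying the cohomology of $\L(P)$, computed one internal degree at a time, with the graded pieces of $\textnormal{Tor}^E_\bullet(Q,\CC)$. In effect I would make the BGG dictionary of \cite{efs} fully explicit in this situation rather than merely invoke it, since the only real content is a bookkeeping of degrees. Throughout I would keep in mind that $\widehat{(\cdot)}$ is nothing but vector-space duality with the grading reversed, so it is exact and satisfies $\widehat{\widehat P}=P$ and $Q=\widehat P$.

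First I would compute $\textnormal{Tor}^E_\bullet(Q,\CC)$ using the minimal free (Cartan) resolution of the residue field $\CC$ over $E$, whose $i$-th term is $E\otimes_\CC \textnormal{Sym}^i V$ with $\textnormal{Sym}^i V$ placed in degree $-i$. Tensoring with $Q$ over $E$ collapses this to the complex with terms $Q\otimes_\CC \textnormal{Sym}^i V$ and Koszul-type differential given by the $V$-action on $Q$; its $i$-th homology is $\textnormal{Tor}^E_i(Q,\CC)$. Taking $\CC$-duals turns $Q$ into $P$ and $\textnormal{Sym}^i V$ into $S_i=\textnormal{Sym}^i W$, and converts this into the complex whose bigraded pieces are $S_i\otimes_\CC P_j$ with differential $s\otimes p\mapsto \sum_i x_i s\otimes e_i p$ --- that is, exactly $\L(P)$.

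Next I would isolate the key observation that this differential sends $S_i\otimes P_j$ to $S_{i+1}\otimes P_{j-1}$, hence preserves the internal $S$-degree $n=i+j$. Consequently $\L(P)$, viewed as a complex of graded $S$-modules, decomposes in each fixed internal degree $n$ into a finite complex of finite-dimensional vector spaces whose term at the spot $S\otimes_\CC P_j$ is $S_{n-j}\otimes P_j$; by the previous step its cohomology there is canonically dual to $\textnormal{Tor}^E_{n-j}(Q,\CC)_{-n}$. Letting $n$ vary, $\L(P)$ is exact at $S\otimes_\CC P_j$ if and only if $\textnormal{Tor}^E_i(Q,\CC)_{-i-j}=0$ for every $i\ge 0$.

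It then only remains to match indices with Definition \ref{regular}: $Q$ is $m$-regular precisely when $\textnormal{Tor}^E_i(Q,\CC)_{-i-j}=0$ for all $i\ge 0$ and all $j\ge m+1$, which by the previous paragraph is exactly the assertion that $\L(P)$ is exact at $S\otimes_\CC P_j$ for $j=m+1,\dots,d$ (for $j>d$ the term vanishes, and exactness at $S\otimes_\CC P_d$ is the injectivity of the leftmost map since $P_{d+1}=0$). This is precisely the exactness of the displayed sequence $0\to S\otimes_\CC P_d\to\cdots\to S\otimes_\CC P_m$, i.e.\ exactness of $\L(P)$ in its first $d-m$ steps from the left. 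The main obstacle, such as it is, is purely the bookkeeping: keeping the homological index of $\textnormal{Tor}$, the internal $S$-degree $n$, and the position $j$ mutually consistent through the duality $\widehat{(\cdot)}$, and checking that the endpoint conventions (no condition being imposed at the rightmost term $S\otimes_\CC P_m$) align with the off-by-one built into the definition of $m$-regularity. Alternatively one could simply cite \cite{efs} Corollary 2.5 together with \cite{eisenbud} Theorems 7.7--7.8, in which case this same bookkeeping is what one must verify in order to apply those results under the present sign and grading conventions.
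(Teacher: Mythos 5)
Your proof is correct, but it is worth pointing out that the paper itself contains no argument for Proposition \ref{regularity}: the statement is presented as an immediate addendum to \cite{efs} Corollary 2.5 (cf.\ also \cite{eisenbud} Theorems 7.7 and 7.8), attributed to F.-O.\ Schreyer, and closed with a qed symbol, so the BGG dictionary is invoked there as a black box. What you have done is supply the content behind that citation: you compute ${\rm Tor}^E_{\bullet}(Q,\CC)$ from the Cartan resolution $E\otimes_{\CC}\mathrm{Sym}^{\bullet}V$ of the residue field, use exactness of $\CC$-duality to convert the resulting Koszul-type complex on $Q$ into $\L(P)$, and then match strands: since the differential sends $S_i\otimes P_j$ to $S_{i+1}\otimes P_{j-1}$ and hence preserves the internal degree $n=i+j$, exactness of $\L(P)$ at $S\otimes_{\CC}P_j$ in all internal degrees is equivalent to ${\rm Tor}^E_i(Q,\CC)_{-i-j}=0$ for all $i\ge 0$, which is exactly the condition of Definition \ref{regular} when imposed for $j\ge m+1$. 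Your handling of the endpoints is also right: the conditions for $j>d$ are vacuous because $P_j=0$ there, and exactness at $S\otimes_{\CC}P_d$ is just injectivity of the leftmost map. The trade-off between the two routes is clear: the paper's citation is shorter and defers all conventions to the literature, whereas your direct derivation makes the statement self-contained and explicitly verifies that the grading conventions in force here ($E$ generated in degree $-1$, $P_j$ placed in internal degree $j$, $Q_{-j}=P_j^{\vee}$) are the ones under which the Tor-characterization and the exactness statement correspond --- which is precisely the point where a bare citation under different sign or degree conventions could silently go wrong.
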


We propose to apply this machine to the canonical cohomology module.
As before, 
let $X$ be a compact K\"ahler manifold of dimension $d$, and $\alb_X: X \lra  {\rm Alb}(X)$ 
its Albanese map. Set 
\[ V= \HH{1}{X}{\OO_X} \ \ , \ \ E = \Lambda^* V \ \ ,  \ \ W  = V^\vee \ \ , \ \ S = \tn{Sym}(W).  \]
We are interested in the graded $E$-modules
\[ P_X \, = \, \underset{i = 0}{\overset{d}{\oplus}} ~ \HH{i}{X}{ \OO_X} \ \ , \ \ 
Q_X \, = \, \underset{i = 0}{\overset{d}{\oplus}}~ H^i (X, \omega_X),
\] 
the $E$-module structure arising from wedge product with elements of $\HH{1}{X}{\OO_X}$. These become dual  modules (thanks to Serre duality) provided that we assign
$\HH{i}{X}{\OO_X}$ degree $d-i$, and $\HH{i}{X}{\omega_X}$ degree $-i$.

According to Proposition \ref{regularity}, the regularity of $Q_X$ is governed by the exactness of the complex $\L(P_X)$ associated to $P_X$. Thus Theorem \ref{Intro.Reg.Thm}
 from the Introduction follows at once from statement (i) of Theorem \ref{GV_cons} in view of the following:
\begin{lemma}
The complex $\L(P_X)$  coincides with the complex $\L_X$ appearing in \eqref{BGG.Complex.Intro}.
\end{lemma}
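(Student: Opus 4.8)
The plan is to unwind both constructions and check that they agree term by term and differential by differential; the statement is essentially a matter of matching the conventions already fixed above, the only substantive point being the identification of the $E$-module action on $P_X$ with cup product.

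First I would line up the terms. By the degree conventions fixed for $P_X$, the summand $\HH{i}{X}{\OO_X}$ sits in degree $d-i$, so that $(P_X)_j = \HH{d-j}{X}{\OO_X}$ and $P_X = \oplus_{j=0}^d (P_X)_j$ is concentrated in degrees $0$ to $d$. The complex $\L(P_X)$ of Proposition \ref{regularity} therefore has $S\otimes_{\CC}\HH{d-j}{X}{\OO_X}$ in its degree-$j$ spot, with differential lowering $j$ by one. Re-indexing by $i = d-j$, its terms become $S\otimes_{\CC}\HH{i}{X}{\OO_X}$ for $i = 0,\dots,d$, and the differential now raises $i$ by one; the leftmost term is $S\otimes_{\CC}(P_X)_d = S\otimes_{\CC}\HH{0}{X}{\OO_X}$. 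These are exactly the terms of $\L_X$ in \eqref{BGG.Complex.Intro}, in the same order.

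Next I would match the differentials. By definition the differential of $\L(P_X)$ is $s\otimes p\mapsto\sum_i x_i s\otimes e_i p$, where $\{e_i\}$ is a basis of $V = \HH{1}{X}{\OO_X}$, $\{x_i\}\subset W$ the dual basis, and $e_i p$ denotes the action of $e_i \in E$ on $P_X$. Since that action is by construction cup product with $e_i$, on the summand $\HH{i}{X}{\OO_X}$ the map sends $s\otimes\alpha$ to $\sum_i x_i s\otimes(e_i\cup\alpha)$. On the other hand, recall from the proof of Proposition \ref{BGG.Exactness.Prop} that $\L_X = \Hog{\AAA}{\mathcal{K}^\bullet}$, where $\mathcal{K}^\bullet$ is the complex of trivial bundles on $\AAA = \tn{Spec}(S)$ whose differential is, at each point $v\in V$, cup product with $v$; equivalently it is wedging with the tautological section $\sum_i x_i\otimes e_i$ of $\OO_{\AAA}\otimes V$. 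Taking global sections and using $\Hog{\AAA}{\OO_{\AAA}} = S$, this is precisely $s\otimes\alpha\mapsto\sum_i x_i s\otimes(e_i\cup\alpha)$. Hence the two differentials coincide under the identification of terms above, and we obtain an isomorphism of complexes of graded $S$-modules $\L(P_X)\cong\L_X$.

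The only point requiring care — and the closest thing to an obstacle — is a consistent bookkeeping of the grading shifts (equivalently, the passage in the Introduction from the twisted bundles of $\underline{\L}_X$ to the free graded $S$-modules of $\L_X$), together with the observation that the tautological-section description of the differential of $\mathcal{K}^\bullet$ is literally the BGG formula once $W$ is identified with the linear coordinates on $\AAA$. Working throughout with the affine model $\Hog{\AAA}{\mathcal{K}^\bullet}$ sidesteps any twisting gymnastics, since there every term is already the free module $S\otimes_{\CC}\HH{i}{X}{\OO_X}$; no deeper input is needed.
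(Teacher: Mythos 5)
Your proposal is correct and follows essentially the same route as the paper's own proof: both amount to unwinding the definitions and observing that the two differentials are given by the identical formula $s\otimes\alpha\mapsto\sum_i x_i s\otimes(e_i\cup\alpha)$, i.e.\ cup product combined with the natural map $S^{p-1}W \to S^pW\otimes V$; your detour through the affine model $\Hog{\AAA}{\mathcal{K}^\bullet}$ is just the bookkeeping the paper itself sets up in the proof of Proposition \ref{BGG.Exactness.Prop}. The degree re-indexing $i=d-j$ and the identification of the $E$-action with cup product are exactly the points the paper's shorter proof leaves implicit.
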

\begin{proof}
It follows easily from the definitions that the differentials of both complexes are given on the graded piece corresponding to 
any $p\ge 0$ and $i\ge 0$ by 
$$S^{p-1} W\otimes H^{i-1} (X, \OO_X) \longrightarrow S^p W\otimes H^i (X, \OO_X)$$
induced by the cup-product map $V\otimes H^{i-1} (X, \OO_X) \rightarrow  H^i (X, \OO_X)$ and the natural map
$S^{p-1} W \rightarrow S^p W \otimes V$.
\end{proof}

\begin{remark}[{Exterior Betti numbers}]
The exterior graded Betti numbers of $Q_X$ are computed as the dimensions of the vector spaces
${\rm Tor}_i^E (Q, \CC)_{-i-j}$. When $X$ is of maximal Albanese dimension and $q(X) > \dim X$,  Theorem \ref{Intro.Reg.Thm} implies that these vanish for $i \ge 0$ and $j \ge 1$, and the $i$-th Betti number 
in the linear resolution of $Q_X$ is 
$$b_i \ =\ \dim_{\CC} {\rm Tor}_i^E (Q, \CC)_{-i} \ = \ h^0 (\PP, \F(i))$$ 
where $\F$ is the BGG sheaf defined in \eqref{BGG.Sheaf.Eqn.Intro} in the Introduction. (The last equality follows from general machinery,  
cf. \cite{eisenbud} Theorem 7.8.) On the other hand $\F$ is $0$-regular in the sense of Castelnuovo-Mumford  by virtue of having a linear resolution, so the higher cohomology of its nonnegative twists 
vanishes. Hence $b_i = \chi (\PP, \F(i))$, i.e. the  exterior Betti numbers are computed by the Hilbert 
polynomial of $\F$. 
\end{remark}

\begin{remark} [Alternative proof of Proposition \label{Streamlined.Non-Exactness.Rmk}  \ref{BGG.Non.Exactness.Prop}] \label{Streamlined.Non.Exactness}
One can use the BGG correspondence to deduce the non-exactness statement of Proposition \ref{BGG.Non.Exactness.Prop} from a theorem of Koll\'ar and its extensions. In fact, in view of Proposition \ref{regularity}, it is equivalent to prove that $Q_X$ is not 
$(k-1)$-regular. To this end, observe that the main result of \cite{kollar2} (extended in \cite{saito} and \cite{takegoshi} to the K\"ahler 
setting) gives the splitting 
$\RR a_* \omega_X \cong  \oplus_{j = 0}^k \, R^ja_* \omega_X\, [-j]$ 
in the derived category of $A$. This implies that $Q_X$ can be expressed as a direct sum
\[
Q_X \ = \ \oplus_{j=0}^k\,   Q^j[j] \ \ ,  \ \ \text{with} \ \  Q^j  \ = \ \HH{*}{A}{R^ja_* \omega_X}.\]
Moreover this is a decomposition of $E$-modules: $E$ acts on $\HH{*}{A}{R^ja_* \omega_X}$ via cup product through the identification $\HH{1}{X}{\OO_X} = \HH{1}{A}{\OO_A}$, and we again consider $\HH{i}{A}{R^ja_* \omega_X}$ to live in degree $-i$. We  claim next that $Q^k \ne 0$. In fact, each of the $R^ja_* \omega_X$ is supported on the $(d-k)$-dimensional Albanese image of $X$, and hence has vanishing cohomology in degrees $> d-k$. Therefore $\HH{d}{X}{\omega_X} = \HH{d-k}{A}{R^{k}a_* \omega_X} $, which shows that $Q^k \ne 0$.  On the other hand,  $Q^k [k]$ is concentrated in degrees $\le -k$, and therefore $Q_X$ must have  generators in degrees $\le -k$. 
\end{remark}

\begin{remark} Keeping the notation of the previous Remark, the authors and C. Schnell have shown that each of the modules $Q^j$ just introduced is $0$-regular.
Thus the minimal $E$-resolution of $Q_X$ splits into the direct sum of the (shifted)-linear resolutions of the modules 
$Q^j [j]$. Details will appear in a forthcoming note. \end{remark}

Finally, we discuss briefly a variant involving twisted modules. Fix an element $\alpha \in \Picc^0(X)$, and set
\[
P_\alpha \ = \ \oplus\,  \HH{i}{X}{\alpha} \ \ , \ \ Q_{\alpha} \ = \ \oplus\,  \HH{j}{X}{\omega_X \otimes \alpha^{-1}}.
\]
With the analogous grading conventions as above, these are again dual modules over the exterior algebra $E$. Letting 
\[  t \ = \ t(\alpha) \ = \ \max \{ \, i \mid \HH{i}{X}{ \omega_X \otimes \alpha^{-1}} \ne 0\,  \}, \]
the BGG complexes $\mathbf{L}(P_\alpha)$ and $\underline{\mathbf{L}}(P_\alpha)$ for $P_\alpha$ take the form 
\begin{equation}
0\rightarrow S\otimes H^{d-t} (X, \alpha) \rightarrow  S\otimes H^{d-t+1} (X, \alpha)\rightarrow \ldots \rightarrow S \otimes H^{d-1}(X, \alpha)\rightarrow S \otimes H^d (X, \alpha)\rightarrow 0 \notag
\end{equation}  and\begin{multline}
 0\rightarrow \OO_{\PP }(-t)\otimes H^{d-t} (X, \alpha) \rightarrow  \OO_{\PP } (-t+1)\otimes H^{d-t+1} (X, \alpha)\rightarrow \ldots \rightarrow  \\ \rightarrow \OO_{\PP} (-1) \otimes H^{d-1}(X, \alpha)\rightarrow \OO_{\PP } \otimes H^d (X, \alpha)\rightarrow 0.   \notag
\end{multline}
Writing as before $k = k(X)$ for the generic fibre dimension of the Albanese map, it follows as above from \cite{hacon}, \cite{pareschi}, \cite{PP2} that $\mathbf{L}(P_\alpha)$ is exact at the first $d -t - k$ steps from the left. Hence:
\begin{variant}\label{twisted}
The $E$-module $Q_\alpha$ is $k$-regular. \qed
\end{variant}
\noi We will return to the sheafified complex $\underline{\mathbf{L}}(P_\alpha)$ later.

\begin{remark} [Holomorphic forms] One can also extend aspects of the present discussion to $E$-modules associated to other bundles of holomorphic forms. This is worked out by Lombardi \cite{lombardi}, who gives some interesting applications: see Remark \ref{lombardi}.  
\end{remark}


\section{Inequalities for Numerical Invariants}

In this section, we use the BGG-sheaf $\FF$ to study numerical invariants of a compact K\"ahler manifold. The exposition proceeds in three parts. We begin by establishing Theorem \ref{Intro.Chern.Ineq.Thm} from the Introduction. In the remaining two subsections we discuss examples, applications and variants. 

\subsection*{Inequalities from the BGG bundle}
As before, let $X$ be a compact K\"ahler manifold of dimension $d$, and write
\[
p_g \ = \ \hh{0}{X}{\omega_X} \ \ , \ \ \chi \ = \ \chi\big(X, \omega_X\big) \ \ , \ \ q \ = \ \hh{1}{X}{\OO_X} \ \ , \ \ n = q-1.\]
Denote by $\PP = \PPP \big(\HH{1}{X}{\OO_X}\big)$, so that $\PP$ is a projective space of dimension $n = q-1$, and by $\FF =\FF_X$ the BGG-sheaf on $\PP$  introduced in \eqref{BGG.Sheaf.Eqn.Intro}. 
Once one knows that $\FF$ is locally free, more or less elementary arguments with vector bundles on projective space yield inequalities for numerical invariants. 
As in the Introduction, for $1 \le i \le q-1$ define $\gamma_i = \gamma_i(X)$ to be the coefficient of $t^i$ in the formal power series 
$$\gamma  (X; t) \ =_{\text{def}} \  \prod_{j = 1}^{d} (1 - jt)^{(-1)^j  h^{d, j}} \ \in \ \ZZ[[t]],$$
with $h^{i,j} = h^{i,j}(X)$. The following statement recapitulates Theorem \ref{Intro.Chern.Ineq.Thm} from the Introduction under a slightly weaker hypothesis
(cf. Proposition \ref{basic}).

\begin{theorem} \label{Stated.Thm.Section.3}
Assume that $0 \in \Picc^0(X)$ is an isolated point of $V^i(\omega_X)$ for every $i > 0$.Then  
\begin{enumerate} \item[(i).] Any Schur polynomial of weight $\le q-1$ in the $\gamma_i$ is non-negative. In particular \[ \gamma_i(X) \ \ge \ 0\] for every $1 \le i \le q-1$. 
\vskip 4pt
\item[(ii).] If $i$ is any index with $\chi (\omega_X) < i < q$, then $\gamma_i(X) = 0$. 
\vskip 4pt
\item[(iii).] One has $\chi (\omega_X) \ge q  - d$. \end{enumerate}
\end{theorem}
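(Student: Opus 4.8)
The first move is to compute the total Chern class of $\FF$ directly from the resolution $\underline{\mathbf{L}}_X$. Since total Chern classes are multiplicative on exact sequences and $c\big(\OO_\PP(-d+i)\big)=1-(d-i)H$ (with $H$ the hyperplane class), the resolution gives $c(\FF)=\prod_{i=0}^{d}\big(1-(d-i)H\big)^{(-1)^{d-i}h^i(X,\OO_X)}$. Using Serre duality together with Hodge symmetry in the form $h^i(X,\OO_X)=h^{d,d-i}$ and re-indexing by $j=d-i$, this becomes $c(\FF)=\prod_{j=1}^{d}(1-jH)^{(-1)^j h^{d,j}}=\gamma(X;H)$. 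Thus $c_i(\FF)=\gamma_i\,H^i$ for all $i$, so the $\gamma_i$ are literally the Chern classes of $\FF$. After this identification each of (i)--(iii) becomes a statement about the vector bundle $\FF$ on $\PP$, which has dimension $q-1$.

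For (i) I would use that $\FF$ is globally generated: by \eqref{BGG.Sheaf.Eqn.Intro} it is a quotient of $\OO_\PP\otimes \HH{d}{X}{\OO_X}$, hence nef. The Fulton--Lazarsfeld theorem on numerically positive polynomials for nef bundles then says that every Schur polynomial $s_\lambda(c(\FF))$ of weight $|\lambda|\le \dim\PP=q-1$ is represented by a non-negative multiple of the class $H^{|\lambda|}$; evaluating, $s_\lambda(\gamma)\ge 0$, and the special case where the Schur polynomial is $c_i$ itself yields $\gamma_i\ge 0$. Part (ii) is then purely formal: by Proposition \ref{basic} the rank of $\FF$ is $\chi(\omega_X)$, and the Chern classes of a bundle vanish above its rank, so $c_i(\FF)=0$ for $i>\chi(\omega_X)$; for $\chi(\omega_X)<i<q$ these are honest classes on $\PP$, whence $\gamma_i=0$.

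The crux is (iii), and here the plan is to read off a \emph{syzygy} condition from the \emph{length} of the resolution. The complex $\underline{\mathbf{L}}_X$ is a minimal (purely linear) locally free resolution of $\FF$ of length $d$, with leftmost term $\OO_\PP(-d)$ nonzero since $h^0(X,\OO_X)=1$. Passing to the module $M=\bigoplus_k\HH{0}{\PP}{\FF(k)}$, minimality forces $\tn{pd}\,M=d$, so by Auslander--Buchsbaum over the polynomial ring in $q=\dim\PP+1$ variables, $\tn{depth}\,M=q-d$. Equivalently, chasing cohomology through $\underline{\mathbf{L}}_X$ shows $\HH{i}{\PP}{\FF(k)}=0$ for all $k$ and $1\le i\le q-d-2$, so that $M$ is a $(q-d)$-th syzygy module. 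Since $d\ge 1$, $\FF$ is not a direct sum of line bundles and $M$ is a non-free syzygy, so the Evans--Griffith Syzygy Theorem gives $\tn{rk}(\FF)\ge q-d$, i.e.\ $\chi(\omega_X)\ge q-d$.

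The main obstacle is precisely this last step: recognizing that the \emph{a priori} purely numerical inequality $\chi(\omega_X)\ge q-d$ is controlled by the rank of the syzygy module attached to $\FF$, and matching the depth/cohomological-vanishing count with the exact hypotheses of Evans--Griffith (so that the vanishing of $H^1,\dots,H^{q-d-2}$ upgrades to rank $\ge q-d$). One should also verify the two bookkeeping points that make the argument clean: that $\underline{\mathbf{L}}_X$ really is the minimal resolution, so $\tn{pd}\,M=d$, and that the saturatedness of $M$ (equivalently the $0$-regularity of $\FF$) makes the sheaf-level resolution compute $\tn{depth}\,M$ correctly. As a sanity check the bound is sharp: for curves ($d=1$) one has $\FF=T_{\PP^{q-1}}(-1)$ of rank $q-1$, and for surfaces ($d=2$) it recovers the Castelnuovo--de Franchis inequality $p_g\ge 2q-3$.
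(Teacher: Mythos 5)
Your treatment of (i) and (ii), and the identification $c(\FF)=\gamma(X;H)$ read off from the resolution via Whitney's formula and Serre duality, coincide with the paper's proof: there too $\gamma(X;t)$ is observed to be the Chern polynomial of $\FF$, non-negativity of all Schur polynomials of weight $\le q-1$ comes from global generation (effectivity of the degeneracy classes, which is equivalent to the nef/Fulton--Lazarsfeld statement you invoke), and (ii) is the vanishing of Chern classes above $\tn{rk}(\FF)=\chi(\omega_X)$.

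For (iii) you take a genuinely different route. The paper's quick argument is a contradiction via the Evans--Griffith--Ein \emph{splitting criterion}: if $\chi(\omega_X)\le q-1-d$, then the vanishing of intermediate cohomology of all twists of $\FF$ (which you correctly locate in the range $1\le i\le q-d-2$) forces $\FF$ to be a direct sum of line bundles, impossible because a sum of line bundles cannot admit a non-trivial minimal linear resolution; the paper then gives a second argument avoiding Evans--Griffith altogether, splicing \eqref{BGG.Lin.Resoln} with the Koszul complex of a general section of $\FF$ and playing a Le Potier-type vanishing against Kodaira vanishing on the zero scheme. You instead apply the Evans--Griffith \emph{syzygy theorem} to the graded section module. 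This is in essence the original strategy of \cite{PP2}, which the paper explicitly contrasts with its own in Remark \ref{irrat}: in \cite{PP2} the syzygy theorem is applied to a Fourier--Mukai transform of the Poincar\'e bundle, whereas you apply it to the BGG bundle. Your route makes the inequality literally the rank bound for non-free syzygies; the paper's two routes are, respectively, quicker (given the linear resolution) and logically lighter, since bypassing the deep Evans--Griffith theorem was one of the paper's stated aims.

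Two points in your (iii) need shoring up, both fixable. First, the edge cases, which the paper dispatches explicitly: for $q\le d$ the claim is trivial since $\chi(\omega_X)=\tn{rk}(\FF)\ge 0$, but for $q=d+1$ your vanishing range is empty and, more seriously, non-freeness of $M=\bigoplus_k\HH{0}{\PP}{\FF(k)}$ requires $\tn{pd}\,M=d$, which rests on the cokernel of $\mathbf{L}_X$ being saturated --- and depth guarantees saturation only when $q\ge d+2$. Both problems disappear if you run the argument on the cokernel module $M'$ of $\mathbf{L}_X$ itself rather than on the module of sections: by Theorem \ref{GV_cons}(i) with $k=0$, $\mathbf{L}_X$ is a free resolution of $M'$, minimal because its differentials are matrices of linear forms, so $\tn{pd}\,M'=d\ge 1$ and $M'$ is not free; moreover $M'$ agrees with the section module away from the irrelevant ideal (hence is locally free there) and has depth $q-d$ at it, so it is a non-free $(q-d)$-th syzygy of rank $\chi(\omega_X)$, and Evans--Griffith applies uniformly for $q\ge d+1$. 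Second, your passage from depth and cohomology vanishing to ``$M$ is a $(q-d)$-th syzygy'' silently uses the Auslander--Bridger/Evans--Griffith criterion that, over a regular graded ring, a finitely generated module satisfying Serre's condition $(\widetilde S_k)$ is a $k$-th syzygy; this is standard but it is precisely the hinge of your argument and should be made explicit, with the local freeness of $M$ away from the irrelevant ideal checked as above.
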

\begin{proof}
Thanks to Proposition \ref{basic}, the hypothesis guarantees that $\FF$ is locally free, and has a linear resolution:
\begin{multline}  \label{BGG.Lin.Resoln}
 0\lra \OO_{\PP}(-d)\otimes \HH{0}{X}{\OO_X} \lra  \OO_{\PP} (-d+1)\otimes \HH{1}{X}{\OO_X}\lra  \ldots \\ \ldots \lra \OO_{\PP} (-1) \otimes \HH{d-1}{X}{\OO_X} \lra \OO_{\PP} \otimes \HH{d}{X}{\OO_X} \lra \FF \lra 0. 
  \end{multline}
  Identifying as usual cohomology classes on $\PP^n$ with integers,    $\gamma(X;t)$ is then just the Chern polynomial of $\FF$. On the other hand, as   $\FF$   is globally generated,  the Chern classes $c_i(\FF)$ --  as well as the Schur polynomials in these -- and represented by effective cycles. Thus \[
\gamma_i(X) \ = \ \deg c_i(\FF) \ \ge \ 0.
\]
The second statement follows from the fact that $c_i(\FF) = 0$ for $i > \rk(\FF)$. 

Turning to (iii), we may assume that $q > d$ since in any event $\chi \ge 0$ by generic vanishing. If $q-d = 1$, then the issue is to show that $\chi = \rk(\FF) \ge 1$, or equivalently that $\FF \ne 0$. But this is clear, since there are no non-trivial exact complexes of length $n$ on $\PP^n$ whose terms are sums of line bundles of the same degree. So we may suppose finally that $q-1 = n >d$. The quickest argument is note that chasing through \eqref{BGG.Lin.Resoln}
 implies that $\FF$ and its twists have vanishing cohomology in degrees $0 < j < n - d - 1$. But if $\chi \le n-d$ this means by a result of Evans--Griffith, \cite{eg} Theorem 2.4, that $\FF$ is a direct sum of line bundles, which as before is impossible. (See \cite{positivity}, Example 7.3.10, for a quick proof of this splitting criterion due to Ein, based on Castelnuovo-Mumford regularity and vanishing theorems for vector bundles.) 

For a more direct argument in the case at hand that avoids Evans--Griffith, let $s \in \HH{0}{\PP}{\FF}$ be a general section, and let $Z = \tn{Zeroes}(s)$.  We may suppose that $Z$ is non-empty -- or else we could construct a vector bundle $\FF^\prime$ of smaller rank having a linear resolution as in \eqref{BGG.Lin.Resoln} -- and smooth of dimension $n - \chi$. Splicing together the sequence \eqref{BGG.Lin.Resoln} and the Koszul complex determined by $s$, we arrive at a long exact sequence having the shape:
\begin{multline}
0 \lra \OO_{\PP}(-d) \lra \oplus\, \OO_{\PP}(-d + 1) \lra \ldots \lra  \oplus \, \OO_{\PP}(-1) \lra \\ \lra \oplus\,  \OO_{\PP}  \lra \Lambda^2 \FF \lra \ldots \lra \Lambda^{\chi -1}\FF \lra \OO_{\PP}(c_1) \lra \OO_Z(c_1) \lra 0, 
\tag{*}
\end{multline}
where $c_1 = c_1(\FF)$. Observe that $\omega_Z = \OO_Z(c_1-n-1)$ by adjunction. Since $\FF$ is globally generated, a variant of the Le Potier vanishing theorem\footnote{
The statement we use is that if $\mathcal{E}$ is a nef vector bundle of rank $e$ on a smooth projective variety $V$ of dimension $n$, then 
\[ \HH{i}{V}{\Lambda^a \mathcal{E} \otimes \omega_V \otimes L} = 0
\] for $i > e-a$ and any ample line bundle $L$.
In fact, it is equivalent to show that $\HH{j}{V}{\Lambda^a \mathcal{E}^* \otimes L^*} = 0$ for $j < n + a - e$. For this,  after passing to a suitable branched covering as in \cite{positivity}, proof of Theorem 4.2.1, we may assume that $L = M^{\otimes a}$, in which case the statement follows from Le Potier vanishing in its usual form: see \cite{positivity}, Theorem 7.3.6. 
}
yields that 
\[ \HH{i}{\PP}{\Lambda^a \FF \otimes \omega_{\PP} (\ell)} = 0 \ \ \text{for} \ \ i> \chi-a\ , \ \ell > 0. \]
Now twist through in (*) by $\OO_{\PP}(d-n-1)$. Chasing through the resulting long exact sequence, one finds that $\HH{n-d-(\chi-1)}{Z}{\omega_Z(d)} \ne 0$. But if $\chi \le n-d$, this contradicts Kodaira vanishing on $Z$. 
\end{proof}

\begin{remark} [{Evans--Griffith Theorem}]\label{irrat}   
A somewhat more general form of (iii) appears in \cite{PP2} and can be deduced here as well:
applying Variant \ref{gv_index} and using more carefully the results of \cite{gl2}, one can 
assume only that there are no irregular fibrations $f : X \lra Y$ such that $Y$ is generically finite onto a \emph{proper} subvariety of a complex torus (i.e. $X$ has no higher irrational pencils in Catanese's 
terminology \cite{catanese}.)
The argument in \cite{PP2} involved applying the Evans--Griffith syzygy theorem to the Fourier--Mukai transform of the Poincar\'e bundle on $X \times \Picc^0(X)$. The possibility  mentioned in the previous proof of applying the Evans--Griffith--Ein splitting criterion to the BGG bundle $\FF$ is related but substantialy quicker. As we have just seen the additional information that $\FF$ admits a linear resolution allows one to bypass Evans--Griffith altogether, although as in Ein's proof we still use vanishing theorems for vector bundles. 
\end{remark}

\subsection*{Hodge-number inequalities} Here we give some examples and variants of the inequalities appearing in the first assertions of Theorem  \ref{Stated.Thm.Section.3}. To put things in context, we start with an extended remark on what can be deduced from previous 
work of various authors.

\begin{remark}[Inequalities deduced from \cite{catanese} and \cite{cp}]\label{catanese}
Catanese \cite{catanese} shows that if a compact K\"ahler manifold $X$ admits no irregular fibrations, then  the natural 
maps 
\begin{equation} \label{wedge.forms.map}
\phi_k: \bigwedge^k H^i (X, \OO_X) \longrightarrow H^k (X, \OO_X) \end{equation}
are injective on primitive forms $\omega_1 \wedge\ldots \wedge \omega_k$, for $k \le \dim X$. Since these correspond to the Pl\"ucker embedding of the Grassmannian ${\bf G}(k, V)$, one obtains the bounds $h^{0 , k}(X) \ge k(q(X) - k) + 1$. This includes the classical
$h^{0,2} (X) \ge 2q(X) - 3$.  However, still based on Catanese's results, one of the referees points out 
a nice argument that provides the even stronger inequality:
\begin{equation} 
\label{referee} h^{0,2}(X) \ \ge 4q(X) - 10 \ \end{equation}
provided that $\dim X \ge 3$. We thank the referee for  allowing us to include the proof here.

Assume then that $X$ is a compact K\"ahler manifold of dimension $\ge 3$ with no irregular fibrations. By
\cite{catanese}, for any independent $1$-forms $\omega_1$, $\omega_2$, $\omega_3$ on $X$
one has $\omega_1\wedge \omega_2 \wedge \omega_3 \neq 0$ in $H^0 (X, \Omega_X^3)$. Accordingly, 
for any independent $\omega_1, \omega_2, \omega_3, \omega_4 \in H^0 (X, \Omega_X^1)$ one has
$$\omega_1 \wedge \omega_2 \neq \omega_3 \wedge \omega_4 \in H^0 (X, \Omega_X^2).$$
Writing $W = ~<\omega_1, \omega_2, \omega_3, \omega_4> ~\subseteq H^0 (X, \Omega_X^1)$, we then have that the natural map
$$ \bigwedge^2 W \rightarrow H^0 (X, \Omega_X^2)$$
is injective (as the secant variety of the Grassmannian ${\bf G}(2,4)$ fills up the ambient $\PP^5$ of the Pl\"ucker embedding). 
Denote now by $\E$ the tautological sub-bundle on the Grassmannian of subspaces
${\bf G} = {\bf G} (4, H^0 (X, \Omega_X^1) )$. We obtain a morphism 
$$ {\bf R} := \PPP (\bigwedge^2 \E) \overset{\varphi}{\longrightarrow} \PPP (H^0 (X, \Omega_X^2)) $$
which is given by sections of the line bundle $\OO_{\PP}(1)$. As this line bundle is big,\footnote{
This is equivalent to the assertion that the map $ \Phi : {\bf R} \lra \PPP (\bigwedge^2 H^0 (X, \Omega_X^1) )$ is generically finite over its image. But  it is elementary to see that the image of $\Phi$ coincides with the secant variety of the Plucker embedding of ${\bf G}^\pr = {\bf G}(2, H^0 (X, \Omega_X^1))$, and this secant variety is well-known to have dimension $= 2 \dim {\bf G}^\pr + 1 - 4 = 4q - 11 = \dim {\bf R}$.
}
we have
${\rm dim}~{\rm Im}(\varphi) = {\rm dim}~\RR$, and \eqref{referee} follows.

Finally we note that Causin and Pirola \cite{cp} provide a more refined study in the case $k = 2$ of the homomorphism appearing in \eqref {wedge.forms.map}. Among other things, 
for $q(X) \le 2\dim X -1$ they show that $\phi_2$ is injective, so $h^{0,2}(X) \ge {q(X)\choose 2}$. 
Thus for threefolds with $q(X) = 5$ and no irregular fibrations, their result coincides with (\ref{referee}).\footnote{The results of Catanese and Causin-Pirola actually only assume the absence of higher irrational pencils, cf. Remark \ref{irrat}.}
\end{remark}

\begin{example}[Theorem \ref{Stated.Thm.Section.3}  in small dimensions]
We unwind a few of the inequalitites  in  statement (i) of Theorem \ref{Stated.Thm.Section.3}. We assume that $X$ has dimension $d$, and that it carries no irregular fibrations. For compactness, write $h^{0,j} = h^{0,j}(X)$ and $q= q(X)$. To begin with, the condition $\gamma_1 \ge 0$ gives a linear inequality among $q, h^{0,2}, \ldots, h^{0,d-1}$. In small dimensions this becomes:
\begin{equation} \label{Linear.Ineqs}
\begin{aligned}
h^{0,2}  \ &\ge \ -3 + 2q \ \ &\text{ when $d = 3$;} \\
h^{0,3}  \ &\ge \ 4 - 3q + 2 h^{0,2} \ \ &\text{ when $d = 4$;} \\
h^{0,4} \ &\ge \ -5 + 4q- 3 h^{0,2} + 2 h^{0,3} \ \ &\text{ when $d = 5$.}  
\end{aligned}
\end{equation}
Similarly, $\gamma_2$ is a quadratic polynomial in the same invariants, and one may solve $\gamma_2 \ge 0$ to deduce the further  and stronger inequalities:
\begin{equation} \label{Quadr.Ineqs}
\begin{aligned}
h^{0,2}  \ &\ge \ -
\frac{7}{2} + 2q  + \frac{\sqrt{8q -23}}{2}  \ \ &\text{ when $d = 3$;} \\
h^{0,3}  \ &\ge \  \frac{7}{2} - 3q +  2 h^{0,2}  + \frac{\sqrt{ 49 - 24q + 8h^{0,2}}}{2} \ \ &\text{ when $d = 4$;} \\
h^{0,4} \ &\ge \ -\frac{11}{2} + 4q - 3h^{0,2} + 2 h^{0,3} +\frac{ \sqrt{71 + 48q -24h^{0,2} + 8 h^{0,3}}}{2}   \ \ &\text{ when $d = 5$,}  
\end{aligned}
\end{equation}
where in the last case we assume that the expression under the square root is non-negative. (This is automatic when $d = 3$ since $q \ge 3$, and when $d = 4$ thanks to \eqref{referee}.) Note that equality holds in \eqref{Linear.Ineqs} when $X$ is an abelian variety (in which case $\FF = 0$). Similarly, when $X$ is a theta divisor in an abelian variety of dimension $d + 1$,  then $\rk \FF = 1$, so equality holds in each of the three instances of \eqref{Quadr.Ineqs}. 
\end{example}

When $d = 3$ or $d = 4$, we can combine the various inequalities in play to get an asymptotic statement:
\begin{corollary}\label{hodgethreefold}
$($i$)$. If $X$ is an irregular compact K\"ahler threefold with no irregular fibration, then \[ h^{0,3}(X) \ \ge \ h^{0,2}(X) - 2, \] so asymptotically $$ h^{0,2} (X) \ \succeq \ 4q(X) \  \ {\rm ~and~} \ \ h^{0,3}(X) \succeq 4q(X).$$
\vskip 4pt
\noindent
$($ii$)$. If $X$ is an irregular compact K\"ahler fourfold with no irregular fibration, then asymptotically 
$$ h^{0,2} (X) \ \succeq \ 4q(X)\ \ , \  \   \ h^{0,3}(X) \ \succeq \  5q(X) + \sqrt{2q(X)}, 
\ \ {\rm ~and~}\  \ h^{0,4}(X)\ \succeq  \ 3q(X)  + \sqrt{2q(X)}.$$
\end{corollary}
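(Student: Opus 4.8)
The plan is to turn the three assertions of Theorem \ref{Stated.Thm.Section.3}, together with the referee's inequality \eqref{referee}, into bounds on the individual Hodge numbers $h^{0,j}$. The uniform device is Serre duality: for $X$ of dimension $d$ one has $h^{d,j} = h^{0,d-j}$, so that
\[
\chi(\omega_X) \ = \ \sum_{j=0}^d (-1)^j h^{d,j} \ = \ \sum_{j=0}^d (-1)^j h^{0,d-j}.
\]
Substituting this into the bound $\chi(\omega_X) \ge q - d$ of part (iii) produces, in each fixed dimension, a single linear relation among $q$ and the $h^{0,j}$, which is the backbone of both statements. Since $\dim X \ge 3$ throughout, \eqref{referee} is available and supplies $h^{0,2} \ge 4q - 10$, i.e.\ $h^{0,2} \succeq 4q$.

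For the threefold case (i), Serre duality gives $\chi(\omega_X) = h^{0,3} - h^{0,2} + q - 1$, and feeding this into $\chi(\omega_X) \ge q - 3$ yields the stated inequality $h^{0,3} \ge h^{0,2} - 2$ immediately. Combining this with $h^{0,2} \succeq 4q$ gives both asymptotic bounds $h^{0,2} \succeq 4q$ and $h^{0,3} \succeq 4q$ at once.

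For the fourfold case (ii), Serre duality gives $\chi(\omega_X) = h^{0,4} - h^{0,3} + h^{0,2} - q + 1$, so part (iii) reads $h^{0,4} \ge 2q - 5 + h^{0,3} - h^{0,2}$. The bound $h^{0,2} \succeq 4q$ is again \eqref{referee}. For $h^{0,3}$ I would feed $h^{0,2} \ge 4q - 10$ into the quadratic inequality of \eqref{Quadr.Ineqs} (equivalently, into $\gamma_2 \ge 0$); this collapses its right-hand side to $5q - \tfrac{33}{2} + \tfrac12\sqrt{8q-31}$, and since $\tfrac12\sqrt{8q} = \sqrt{2q}$ one obtains $h^{0,3} \succeq 5q + \sqrt{2q}$. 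Finally I would substitute this same quadratic lower bound for $h^{0,3}$ into the consequence $h^{0,4} \ge 2q - 5 + h^{0,3} - h^{0,2}$ of part (iii); after cancellation the coefficient of $h^{0,2}$ becomes $+1$, so inserting $h^{0,2} \ge 4q - 10$ one last time gives $h^{0,4} \ge 3q - \tfrac{23}{2} + \tfrac12\sqrt{8q-31}$, that is $h^{0,4} \succeq 3q + \sqrt{2q}$.

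There is no deep obstacle here, since everything is assembled from inequalities already established; the one point that genuinely needs care is the \emph{monotonicity bookkeeping}. Because \eqref{referee} provides only a lower bound on $h^{0,2}$, each substitution of $h^{0,2} \ge 4q - 10$ is legitimate only if the quantity being estimated is increasing in $h^{0,2}$. This must be verified for the quadratic bound on $h^{0,3}$ (whose right-hand side is manifestly increasing in $h^{0,2}$, and whose discriminant $8q-31$ is non-negative precisely because of \eqref{referee}) and, crucially, for the derived bound on $h^{0,4}$, where one relies on the cancellation that leaves the net coefficient of $h^{0,2}$ equal to $+1$ rather than negative. Tracking these signs through the cancellations is the only delicate step.
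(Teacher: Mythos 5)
Your proposal is correct and follows essentially the same route as the paper: both rewrite the bound $\chi(\omega_X) \ge q - d$ of Theorem \ref{Stated.Thm.Section.3}(iii) via Serre duality (giving $h^{0,3} \ge h^{0,2}-2$ for threefolds and $h^{0,4} \ge 2q-5 + h^{0,3}-h^{0,2}$ for fourfolds), then combine this with the referee inequality \eqref{referee} and, for fourfolds, the quadratic bound \eqref{Quadr.Ineqs} coming from $\gamma_2 \ge 0$. Your explicit monotonicity bookkeeping and the check that $8q-31 \ge 0$ just spell out details the paper leaves implicit.
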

\begin{proof}
(i). The first inequality is equivalent to the statement $\chi(\omega_X) \ge q - 3$ from statement (iii) of Theorem \ref{Stated.Thm.Section.3}, and the other inequalities follow from this and \eqref{referee}.

(ii). The inequality $\chi(\omega_X) \ge q - 4$ is equivalent to
\[   h^{0,4}(X) \ \ge \ (2q - 5) + \big( h^{0,3}(X) - h^{0,2}(X) \big). \]
The statement then follows by using \eqref{Quadr.Ineqs} to bound $h^{0,3} - h^{0,2}$, and invoking the inequality $h^{0,2} \succeq 4q$ coming from \eqref{referee}. \end{proof}

Similar arguments lead to a new inequality for $h^{1,1}$ on a  surface.
Specifically, let $X$ be a   compact K\" ahler surface with no non-constant morphism to a curve of genus at least $2$. The classical Castelnuovo-de Franchis inequality asserts  that $h^{0,2}(X) \ge 2q (X) - 3$. A related result based on 
the Castelnuovo-de Franchis Lemma and linear algebra also bounds $h^{1,1}$ in terms of the 
irregularity: it is shown in \cite{bhpv}, IV.5.4,  that $h^{1,1}(X) \ge 2 q(X) - 1$. The methods of the present paper yield  a strengthening of this:\footnote{We remark that in the case of surfaces of general type 
without irrational pencils the inequality $h^{1,1}(X) \ge 3q(X) - 2$ could also be obtained by 
combining the Castelnuovo-de Franchis inequality with the deep Bogomolov-Miyaoka-Yau inequality.}
\begin{proposition}\label{surfaces}
If $X$ is a compact K\"ahler surface without irrational pencils, then
$$h^{1,1}(X)\  \ge  \ \left\{ \begin{array}{ll} 3q(X) - 2 & \mbox{if } q(X) \mbox{ is even} \\
								3q(X) - 1 &  \mbox{if } q(X) \mbox{ is odd}   \end{array} \right.$$
\end{proposition}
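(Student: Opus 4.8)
The plan is to run the BGG / derivative-complex machinery of this paper with the structure sheaf replaced by the bundle of holomorphic one-forms $\Omega_X^1$. Writing $V = \HH{1}{X}{\OO_X}$, $q = \dim V$ and $\PP = \PPP(V) = \PP^{q-1}$, cup product with the tautological class gives, exactly as in \eqref{BGG.Sheaf.Cx.Intro}, a three-term complex $\underline{\mathbf{M}}$ of vector bundles on $\PP$:
$$0 \lra \OO_\PP(-2)\otimes \HH{0}{X}{\Omega_X^1} \lra \OO_\PP(-1)\otimes \HH{1}{X}{\Omega_X^1} \lra \OO_\PP \otimes \HH{2}{X}{\Omega_X^1} \lra 0.$$
By Hodge symmetry and Serre duality the outer terms are $\OO_\PP(-2)^{\oplus q}$ and $\OO_\PP^{\oplus q}$, while the middle term is $\OO_\PP(-1)^{\oplus h^{1,1}}$. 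Note that $\Omega_X^1 \cong (\Omega_X^1)^\vee\otimes\omega_X$ on a surface, so $\underline{\mathbf{M}}$ is carried to itself by the functor $(\,\cdot\,)^\vee\otimes\OO_\PP(-2)$, its middle term being fixed.

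First I would locate the exact spots of $\underline{\mathbf{M}}$. As in the proof of Proposition \ref{BGG.Exactness.Prop}, its cohomology on $\PP$ is governed by the stalks at the origin of the direct images $R^i p_{2*}(p_1^*\Omega_X^1\otimes\P)$, which are supported on the cohomological support loci $V^i(\Omega_X^1)$. The generic vanishing results for bundles of holomorphic forms (the $\Omega^p$-extensions described in Remark \ref{lombardi}, together with the Green--Lazarsfeld structure theorem \cite{gl2}) show that for a surface without irrational pencils the outer loci $V^0(\Omega_X^1)$ and $V^2(\Omega_X^1) = -V^0(\Omega_X^1)$ are finite, while only the middle locus $V^1(\Omega_X^1)$ can be all of $\Picc^0(X)$. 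Exactly as in Proposition \ref{basic}, finiteness of the outer loci forces the two outer vector bundle maps to have constant rank, so $\underline{\mathbf{M}}$ is exact at its two ends and its only homology is a genuine vector bundle $\G$ sitting at the middle term. Counting ranks gives $\tn{rk}(\G) = h^{1,1} - 2q$.

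The numerical payoff is then a Chern class computation. In the Grothendieck group of $\PP$ one has $[\G(1)] = [\OO_\PP^{\oplus h^{1,1}}] - [\OO_\PP(-1)^{\oplus q}] - [\OO_\PP(1)^{\oplus q}]$, so, writing $t$ for the hyperplane class,
$$c(\G(1)) \ = \ \frac{1}{(1-t)^q(1+t)^q} \ = \ (1-t^2)^{-q} \ = \ \sum_{k\ge 0}\binom{q+k-1}{k}\,t^{2k}.$$
(The vanishing of the odd Chern classes reflects the self-duality above: $\G(1)$ is an orthogonal bundle.) Thus $c_{2k}(\G(1)) = \binom{q+k-1}{k}\,t^{2k}$ is a strictly positive multiple of $t^{2k}$, hence non-zero whenever $2k \le n := q-1$. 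On the other hand $\G(1)$ is an honest vector bundle of rank $r = h^{1,1}-2q$, so $c_i(\G(1)) = 0$ for all $i > r$. Comparing, there can be no index $k$ with $r < 2k \le n$, forcing $r \ge 2\lfloor n/2\rfloor$; this equals $q-2$ when $q$ is even and $q-1$ when $q$ is odd, which is precisely $h^{1,1}\ge 3q-2$ (resp.\ $3q-1$).

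The main obstacle is the second paragraph: one must establish that the \emph{outer} cohomological support loci of $\Omega_X^1$ are finite under the no-irrational-pencils hypothesis, and deduce from this that the middle homology $\G$ is genuinely locally free of the expected rank. This is the analogue for $\Omega_X^1$ of Proposition \ref{basic}, and it is the only place the geometric hypothesis enters; once $\G$ is known to be a bundle, the remaining steps are the formal Chern class manipulation above (and, notably, no global generation or zero-locus argument is needed here, unlike in Theorem \ref{Stated.Thm.Section.3}). I would also verify the parity bookkeeping against a borderline example, such as a simple abelian surface, where $q=2$, $\G = 0$, and equality $h^{1,1}=4=3q-2$ holds.
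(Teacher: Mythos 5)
Your Chern-class endgame is exactly the paper's: the paper also forms a three-term monad with outer terms of rank $q$ and middle term of rank $h^{1,1}$, whose cohomology $E$ has $c_t(E)=(1-t^2)^{-q}$, and concludes by the same parity argument; your complex on $\PPP\big(\HH{1}{X}{\OO_X}\big)$ is just the paper's complex on $\PPP\big(\HH{0}{X}{\Omega^1_X}\big)$ transported by Hodge conjugation. The genuine gap is in how you certify that the middle homology $\G$ is locally free. Finiteness of the outer loci $V^0(\Omega^1_X)$ and $V^2(\Omega^1_X)$ --- even granting the identification of the cohomology of your complex with the direct images $R^ip_{2*}(p_1^*\Omega^1_X\otimes\P)$, which for $\Omega^1$-coefficients is not established in this paper (it is part of Lombardi's then-unpublished work; for a general coherent sheaf the linearized complex may differ from the true local model by higher-order corrections) --- yields only \emph{sheaf-theoretic} exactness at the two outer terms. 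For the right-hand map this does give fiberwise surjectivity, since cokernels commute with passage to fibers; but for the left-hand map, sheaf injectivity is strictly weaker than fiberwise injectivity: a matrix of linear forms can be injective as a map of sheaves while dropping rank at points, as in $\OO_{\PP^1}(-1)\xrightarrow{\,x\,}\OO_{\PP^1}$. If the first map drops rank anywhere, $\G$ is not locally free, and then the pivotal step ``$c_i(\G(1))=0$ for $i>\tn{rk}(\G)$'' collapses --- coherent sheaves routinely violate it (the ideal sheaf of a point on $\PP^2$ has rank $1$ and $c_2\neq 0$). So the sentence ``exactly as in Proposition \ref{basic}, finiteness of the outer loci forces the two outer vector bundle maps to have constant rank'' is precisely the step that fails as written; Proposition \ref{basic} itself secretly rests on pointwise, directional exactness statements from \cite{gl1}, \cite{el}, not on support considerations alone.

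What is missing is the fiberwise statement: for every $0\neq v\in\HH{1}{X}{\OO_X}$, cup product with $v$ is injective on $\HH{0}{X}{\Omega^1_X}$. By conjugation this is equivalent to the assertion that wedging with any nonzero holomorphic $1$-form is injective on $\HH{1}{X}{\OO_X}$ --- which is exactly the classical Castelnuovo--de Franchis-type fact the paper takes as its starting point, from which it also gets fiberwise surjectivity of the right-hand map for free by Serre duality, producing an honest monad whose cohomology is automatically a bundle. In other words, the ingredient you tried to route around through generic vanishing is unavoidable, unless you replace it by the directional (line-by-line, derived base change) form of the derivative-complex theorem for $\Omega^1$-coefficients, which is much heavier machinery than the paper's two-line argument. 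A secondary inaccuracy: the finiteness of the outer loci is true but is not really covered by your citation of Remark \ref{lombardi}; the clean route is the identity $V^0(\Omega^1_X)=V^1(\omega_X)$ (unitary conjugation plus Serre duality), together with $V^2(\Omega^1_X)=-V^0(\Omega^1_X)$, after which it follows from Remark \ref{SupportLociFinite}.
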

\begin{proof}
It is well known that given any non-zero one-form   $\omega \in H^0 (X, \Omega_X^1)$ on such a surface $X$,  the map $H^1 (X, \OO_X) \overset{\wedge \omega}{\longrightarrow} H^1 (X, \Omega_X^1)$  obtained by wedging with $\omega$ is injective. On the other hand, this map is naturally dual to the 
map $H^1 (X, \Omega_X^1) \overset{\wedge \omega}{\longrightarrow} H^1 (X, \Omega_X^2)$, via Serre duality. Hence in the  natural complex 
$$0 \longrightarrow H^1 (X, \OO_X) \overset{\wedge \omega}{\longrightarrow} H^1 (X, \Omega_X^1)
\overset{\wedge \omega}{\longrightarrow} H^1 (X, \Omega_X^2)\longrightarrow 0,$$ 
the first map is injective and the second is surjective. Globalizing, we obtain   a monad of vector bundles on $\PP : = \PPP(\HH{0}{X}{\Omega_X^1})$:
$$0 \longrightarrow \OO_{\PP} (-1)^{q} \longrightarrow \OO_{\PP}^{h^{1,1} (X)} 
\overset{\phi}{\longrightarrow} \OO_{\PP}(1)^{q}  \longrightarrow 0.$$
The cohomology $E$ of this monad sits in an exact sequence
$$0 \longrightarrow \OO_{\PP}(-1)^{q}  \longrightarrow K 
\longrightarrow E \longrightarrow 0$$
where $K = {\rm ker}(\phi)$. A direct calculation shows that ${\rm rk} (E) = h^{1,1} (X) - 2q$ and 
$$c_t (E) = \frac{1}{(1-t^2)^{q}} = 1 + qt^2 + {{q + 1}\choose{2}} t^4 + \ldots,$$
with non-zero terms in all even degrees $ \le {\rm dim}~\PP = q -1$. As $c_i (E) = 0$ for $i > {\rm rk} (E)$, this implies that ${\rm rk} (E) \ge q - 2$ if $q$ is even, and ${\rm rk} (E) \ge q - 1$ if $q$ is odd.
\end{proof}

\begin{remark}[Bounds for other Hodge numbers]\label{lombardi}
The techniques of this paper, applied to bundles of holomorphic forms $\Omega_X^p$ as opposed to $\OO_X$, can be used to bound other Hodge numbers for important classes of compact K\"ahler manifolds, where no results in the style of those 
of \cite{catanese} in Remark \ref{catanese}  are available. This is carried out by Lombardi in \cite{lombardi}; for instance, on threefolds whose $1$-forms have at most isolated zeros (e.g. subvarieties of abelian varieties with ample normal bundle), there are 
lower bounds for \emph{all} Hodge numbers in terms of $q(X)$. Besides those mentioned in the proof of Corollary \ref{hodgethreefold}, one has asymptotically
$$ h^{1,1} \ \succeq \ 2q + \sqrt{2q} \quad {\rm and} \quad h^{2,1} 
\ \succeq \ 3q + \sqrt{2q}.$$
\end{remark}

\subsection*{Bounds involving the Euler characteristic}  In this final subsection, we make some remarks surrounding the inequality
\begin{equation} \label{CdF.Ineq}
\chi(X, \omega_X) \ \ge \ q(X) - \dim X 
\end{equation}
established in \cite{PP2} and Theorem \ref{Stated.Thm.Section.3} (iii) for compact K\"ahler manifolds with no irregular fibrations.

Note to begin with that  equality holds in \eqref{CdF.Ineq} when $X$ is birational to a complex torus (in which case $\chi = 0$) or to a theta divisor in a principally polarized abelian variety (in which case $d = q-1$ and $\chi = 1$). It was essentially established by Hacon--Pardini \cite{hp}, \S 4, that in fact these are the only such examples with $\chi \le 1$.\begin{proposition}
\label{small_chi}
Let $X$ be an irregular smooth projective complex variety with no irregular fibrations.
\begin{enumerate}
\item[(i).] If $\chi (\omega_X) = 0$, then $X$ is birational to an abelian variety.

\vskip 4pt
\item[(ii).] If $\chi (\omega_X) = 1 = q(X) - \dim X$, then $X$ is birational to a principal polarization in a \tn{PPAV}. \end{enumerate}
\end{proposition}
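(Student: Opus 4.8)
The plan is to run both parts through the generic vanishing package and the structure theorem for cohomological support loci, exactly along the lines of Ein--Lazarsfeld \cite{el} and Hacon--Pardini \cite{hp}, §4, which is the work the statement recapitulates. Throughout I use only the consequence that $X$ carries no \emph{higher irrational pencil} (Remark \ref{irrat}), which is implied by the stated hypothesis and which is satisfied by the boundary examples (a torus, or a theta divisor). Recall that since $X$ has maximal Albanese dimension the Albanese map $\alb_X : X \lra A := \Alb(X)$ is generically finite onto its image, and $\chi(\omega_X) = \hh{0}{X}{\omega_X \otimes \alpha}$ for general $\alpha$, so that $\chi(\omega_X) = 0$ is equivalent to $V^0(\omega_X) \subsetneq \Picc^0(X)$. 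The engine behind both parts is the Green--Lazarsfeld--Simpson structure theorem: every positive-dimensional component of $V^0(\omega_X)$ is a torsion translate of $\hat f^* \Picc^0(Z)$ for a fibration $f : X \lra Z$ onto a normal variety of maximal Albanese dimension with $0 < \dim Z < \dim X$, and such a $Z$ produces a higher irrational pencil. Under our hypothesis this forces the Albanese image to be as large as possible.

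For part (i), assume $\chi(\omega_X) = 0$. By \cite{el} the image $\alb_X(X)$ is ruled by subtori: there is a subtorus $0 \ne T \subseteq A$ with $\alb_X(X) + T = \alb_X(X)$. If $\alb_X(X) \subsetneq A$ were proper, then Stein-factoring the composite $X \lra \alb_X(X)/T \subseteq A/T$ would give a fibration onto a normal variety of maximal Albanese dimension, of dimension strictly between $0$ and $d$, whose image in $A/T$ is a \emph{proper} subvariety of the torus $A/T$ (here I use that $\alb_X(X)$ generates $A$, so $\alb_X(X)/T \ne A/T$). That is a higher irrational pencil, a contradiction. Hence $\dim A = d$ and $\alb_X$ is surjective and generically finite onto $A$. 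It then remains to show $\alb_X$ has degree one, or is étale; this is the content of \cite{hp}, §4, via the generic vanishing analysis of $\alb_{X*}\omega_X$: a genuinely ramified cover of an abelian variety has $\chi(\omega) > 0$, while an étale cover is itself an abelian variety. In either admissible case $X$ is birational to an abelian variety.

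For part (ii), $\chi(\omega_X) = 1 = q - d$ gives $\dim A = d+1$, so $D := \alb_X(X)$ is a divisor and $\alb_X : X \lra D$ is generically finite. First, $\OO_A(D)$ is nondegenerate: were it degenerate, $D$ would be invariant under a positive-dimensional subtorus, producing a higher irrational pencil as above; being effective and nondegenerate, $\OO_A(D)$ is ample, so it defines a polarization with $\chi(\OO_A(D)) = \hh{0}{A}{\OO_A(D)}$. Next, because $X$ has no higher irrational pencil the loci $V^i(\omega_X)$ with $i>0$ are finite (Remark \ref{SupportLociFinite}); as in Proposition \ref{basic} this makes the Fourier--Mukai transform of $\alb_{X*}\omega_X$ locally free, hence a line bundle on $\hat A$ of rank $\chi(\omega_X) = 1$. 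Inverting the transform identifies $\alb_{X*}\omega_X$ with the sheaf attached to $\OO_A(D)$ and forces $\alb_X$ to be birational onto $D$ (this is precisely \cite{hp}, §4). With birationality in hand, adjunction on $A$ — using that $\chi(\omega_\bullet)$ is a birational invariant and that theta divisors have rational singularities — yields $1 = \chi(\omega_X) = \chi(\omega_D) = \chi(\OO_A(D)) = \hh{0}{A}{\OO_A(D)}$, so the polarization is principal and $D$ is its theta divisor. Thus $X$ is birational to a principal polarization in the PPAV $(A, \OO_A(D))$.

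The substantive obstacle in both parts is the passage from \emph{generically finite} to \emph{birational} onto the image. The structure theorems only deliver a surjection onto $A$ (in (i)) or onto a nondegenerate divisor $D$ (in (ii)); ruling out nontrivial ramified covers cannot be seen at the numerical level, since the Euler characteristic of such a cover is governed by the positivity of $\alb_{X*}\omega_X$ rather than by a Riemann--Hurwitz count. This is exactly where the delicate generic vanishing input of \cite{el} and \cite{hp} enters, and I expect it to be the hard step. By contrast, the nondegeneracy, ampleness and principality statements are formal consequences of adjunction on the abelian variety once birationality is established.
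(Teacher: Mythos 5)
Both parts of your argument funnel the hard work into steps that are false as stated or left unverified, and in each case the missing content is precisely what the paper's own proof supplies. In (i), the pivotal claim --- ``a genuinely ramified cover of an abelian variety has $\chi(\omega)>0$, while an \'etale cover is itself an abelian variety'' --- is false without re-invoking the no-fibration hypothesis, and false even when the cover is the Albanese map: take $C$ a genus $2$ curve with bielliptic involution $\sigma$ (quotient an elliptic curve $E'$), $E$ an elliptic curve with translation $\tau$ by a $2$-torsion point, and $X=(C\times E)/\langle\sigma\times\tau\rangle$. Then $q(X)=\dim X=2$, the Albanese map of $X$ is a ramified double cover of the abelian surface $E'\times(E/\tau)$, and $\chi(\omega_X)=0$, yet $X$ is properly elliptic, hence not birational to an abelian variety. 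Of course this $X$ carries irregular fibrations --- that is exactly the point: the lemma you lean on is only true \emph{because of} the no-fibration hypothesis, and no Riemann--Hurwitz-type count or positivity of $\alb_{X*}\omega_X$ detects this (as your own closing paragraph suspects). The only available route is the one the paper takes: $\chi=0$ forces $V^0(\omega_X)\subsetneq\Picc^0(X)$; a positive-dimensional component of $V^0(\omega_X)$ would then produce an irregular fibration by the structure theorem of \cite{gl2}, so $V^0(\omega_X)$ is zero-dimensional; and the Ein--Lazarsfeld theorem (\cite{chh}, Theorem 1.7) says that a variety of maximal Albanese dimension with zero-dimensional $V^0(\omega_X)$ is birational to its Albanese. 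Your reduction to the case $\alb_X(X)=\Alb(X)$ is correct but does not substitute for this analysis --- the counterexample above already has surjective Albanese map --- so part (i) has a genuine gap.

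In (ii) there are two gaps. First, the Mukai-inversion step, and the Hacon--Pardini criterion (\cite{hp}, Proposition 4.2) which is what your appeal to ``\cite{hp}, \S 4'' amounts to, require $V^i(\omega_X)=\{0\}$ for \emph{all} $i>0$, not merely finiteness: a nontrivial isolated point of some $V^i$ gives skyscraper cohomologies of the transform of $\alb_{X*}\omega_X$ in degrees $<d$, and then $\alb_{X*}\omega_X$ is no longer recovered from the rank-one piece alone. The paper closes exactly this gap via Remark \ref{Non-Triv.Isol.Pts}: a nontrivial isolated point $\alpha$ would force $\chi(\omega_X)\ge q(X)-\dim X+p(\alpha)\ge 2$, contradicting $\chi=1$; nothing in your write-up rules such points out. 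Second, your concluding chain $\chi(\omega_X)=\chi(\omega_D)=\chi(\OO_A(D))=h^0(\OO_A(D))$ needs $\alb_{X*}\omega_X=\omega_D$ (e.g.\ $D$ normal with rational singularities), which you justify by ``theta divisors have rational singularities'' --- circular, since $D$ being a theta divisor of a principal polarization is the conclusion. The paper sidesteps both problems by verifying the hypotheses of \cite{hp}, Proposition 4.2 and citing it as a black box: $\chi=1$ makes the BGG sheaf $\FF$ a line bundle, so its linear resolution must be a Koszul complex, giving $h^{0,d}(X)=q(X)$; Catanese's theorem upgrades this to an isomorphism $H^0(A,\Omega^d_A)\cong H^0(X,\Omega^d_X)$; and $V^i(\omega_X)=\{0\}$ for $i>0$ follows from Remarks \ref{SupportLociFinite} and \ref{Non-Triv.Isol.Pts}. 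Your outline rediscovers the shape of Hacon--Pardini's proof, but without these two verifications it does not close.
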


Since the statement does not appear explicitly in \cite{hp} we will briefly indicate the proof, but we stress that all the ideas are already present in that paper. 
\begin{proof}[Sketch of Proof]
We again focus on the exact sequence \eqref{BGG.Lin.Resoln}
  of bundles on $\PP = \PP^{q-1}$. Note that in any event $X$ has maximal Albanese dimension, and hence $q \ge d$. 
  If $\chi = 0$, then $\FF = 0$. In this case one reads off from  \eqref{BGG.Lin.Resoln} that  $q = d$ and $P_1(X) = h^{0,d}(X) > 0$. On the other hand,  the assumption of the theorem implies by \cite{gl2} that   $V^i (\omega_X)$ has only isolated points when $i >0$, and since $\chi(\omega_X) = 0$, this implies that $V^0 (\omega_X)$ also consists only of isolated points. But  a result of Ein-Lazarsfeld 
(cf. \cite{chh}, Theorem 1.7) says that a variety of  maximal Albanese dimension with 
$V^0 (\omega_X)$  zero dimensional is birational to its Albanese.
  
Now suppose that $\chi = 1$. Then $\FF$ is a line bundle, and it follows that  \eqref{BGG.Lin.Resoln}  is a twist of the standard Koszul complex, this being  the unique linear complex of length $n+1$ on $\PP^n$ whose outer terms have rank one. In particular $h^{0,d} (X) = q$.
On the other hand we have an injection $H^0 (A, \Omega_A^d) \rightarrow H^0 (X, \Omega_X^d)$. Indeed, since $d = q-1$, if the pullback map were not injective we would have a $d$-wedge of independent holomorphic $1$-forms on $X$ equal to $0$, which by \cite{catanese} Theorem 1.14 would imply the
existence of an irregular fibration.
Now since the two dimensions are equal, the map is in fact an isomorphism. To prove (ii), one can then use a characterization of principal polarizations due to Hacon-Pardini (cf. \cite{hp} Proposition 4.2), extending a criterion of Hacon, which says that the only other thing we need to check is $V^i (\omega_X) = \{0\}$ for all $i >0$.  But this follows from Remarks \ref{SupportLociFinite} and \ref{Non-Triv.Isol.Pts}.
\end{proof}

On the other hand, one expects it to be very rare to find manifolds $X$ with no irregular fibrations for which
$
\chi(X, \omega_X)=q(X) - \dim X   \ge   2.
$
\begin{conjecture}
If $X$ is an irregular compact K\"ahler manifold with no irregular fibrations and $\chi(\omega_X) \ge 2$, 
then \[ \chi(\omega_X) \ > \ q(X) - \dim X
\]
when $q(X)$ is very large compared to 
$\chi(\omega_X)$.
\end{conjecture}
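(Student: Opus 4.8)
The plan is to argue by contradiction: assume $X$ has no irregular fibrations, that $\chi := \chi(\omega_X) \ge 2$, and that equality $\chi = q - d$ holds, and derive a contradiction once $q$ is large relative to $\chi$. By Theorem \ref{Stated.Thm.Section.3} together with Proposition \ref{basic}, the BGG bundle $\FF$ is then a globally generated vector bundle of rank $\chi$ on $\PP = \PP^{n}$, $n = q-1$, carrying the linear resolution \eqref{BGG.Lin.Resoln} of length $d$. The decisive numerical feature of the equality case is that
\[
n - d \ = \ (q-1) - (q - \chi) \ = \ \chi - 1,
\]
so $\tn{rk}(\FF) = \chi$ exceeds by \emph{exactly one} the threshold $n-d$ below which the Evans--Griffith--Ein criterion invoked in the proof of Theorem \ref{Stated.Thm.Section.3}(iii) forces $\FF$ to split as a direct sum of line bundles. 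Moreover the hypothesis that $q$ be very large compared to $\chi$ places us precisely in the regime $n \gg \tn{rk}(\FF)$, where low-rank bundles on projective space are extremely rigid.

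The heart of the approach I would pursue is to show that in this regime $\FF$ must nevertheless split into line bundles --- which is impossible, since a direct sum of line bundles cannot carry the minimal linear resolution \eqref{BGG.Lin.Resoln} of positive length $d \ge 1$ (its minimal resolution has length $0$, while \eqref{BGG.Lin.Resoln} is genuinely minimal because all its entries are linear). One cannot quote \cite{eg} directly, since the rank is off by one; the idea is instead to upgrade the criterion using the extra structure at hand. Chasing \eqref{BGG.Lin.Resoln} gives $H^j(\PP, \FF(t)) = 0$ for all $t$ and $1 \le j \le \chi - 3$, and for $q \gg \chi$ the zero locus $Z = \tn{Zeroes}(s)$ of a general section $s \in H^0(\PP, \FF)$ is smooth of the small dimension $n - \chi = d-1$. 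Splicing \eqref{BGG.Lin.Resoln} with the Koszul complex of $s$ exactly as in the proof of Theorem \ref{Stated.Thm.Section.3}(iii), but carrying the chase one step beyond the strict-inequality case, should no longer produce an outright contradiction with Kodaira vanishing on $Z$ but rather force a single explicit cohomology group of $\omega_Z = \OO_Z(c_1(\FF) - n - 1)$ to be nonzero. By Kawamata--Viehweg this constrains the positivity of $\omega_Z$ so tightly that $Z$ should be compelled to be birational to an abelian variety; feeding that rigidity back, either $\FF$ acquires a trivial (hence splitting) summand, or the abelian structure on $Z$ propagates through the geometry of $\FF$ and the Albanese map to exhibit an irregular fibration on $X$, against hypothesis.

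A complementary, purely numerical line is worth recording. Part (ii) of Theorem \ref{Stated.Thm.Section.3}, together with the automatic vanishing $c_i(\FF) = 0$ for $i > \chi$, already forces the $q - 1 - \chi$ relations $\gamma_i(X) = 0$ on the Hodge numbers $h^{d,j}$, and for $q \gg \chi$ these become so numerous that, combined with the Schur positivity of part (i), they should pin the $h^{d,j}$ down to the values realized only by complete intersections or products of theta divisors --- varieties which, for $\chi \ge 2$, do carry irregular fibrations. In either approach the main obstacle is the same, and is exactly what keeps the statement at the level of a conjecture: the Evans--Griffith--Ein machinery is off by one in the borderline, so one needs either a genuinely sharpened splitting theorem exploiting the full force of $n \gg \tn{rk}(\FF)$ together with the global generation and linearity of \eqref{BGG.Lin.Resoln}, or a structural classification of the $\chi = q - d$ varieties extending the Hacon--Pardini analysis of Proposition \ref{small_chi} (which settles $\chi \le 1$) to all $\chi \ge 2$. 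Supplying that structural input is the crux.
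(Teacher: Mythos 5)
You are attempting to prove a statement that the paper itself leaves as a \emph{conjecture}: the authors give no proof, only the motivating heuristic that in the equality case $\chi(\omega_X) = q - d$ the BGG bundle $\FF$ would be a non-split, globally generated bundle of rank exactly one above the Evans--Griffith--Ein splitting threshold $n-d$ on $\PP = \PP^{q-1}$, and that such low-rank bundles ``should almost never exist.'' Your framing --- the computation $n - d = \chi - 1$, the observation that \cite{eg} is off by one in the borderline case, the remark that a direct sum of line bundles is incompatible with the minimal linear resolution \eqref{BGG.Lin.Resoln}, and the diagnosis that the crux is a sharpened splitting theorem --- reproduces the paper's own discussion essentially verbatim. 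So there is no proof in the paper for your argument to match, and, as you concede in your final sentence, your proposal is not a proof either; the two bridging steps you sketch do not hold up.

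Concretely: in your geometric line, run the paper's chase from Theorem \ref{Stated.Thm.Section.3}(iii) in the equality case $\chi = n-d+1$. The non-vanishing group it produces is $H^{n-d-(\chi-1)}(Z, \omega_Z(d))$, and here $n-d-(\chi-1) = 0$, so all you learn is $H^0(Z, \omega_Z(d)) \neq 0$. Kodaira and Kawamata--Viehweg vanishing say nothing about $H^0$, and effectivity of $\omega_Z(d)$ for a $(d-1)$-dimensional subvariety $Z \subset \PP^n$ is an essentially vacuous condition; nothing remotely like ``$Z$ is birational to an abelian variety'' follows, nor is there any mechanism in the paper by which such a conclusion would produce a splitting summand of $\FF$ or an irregular fibration of $X$. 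In your numerical line, the relations $\gamma_i(X) = 0$ for $\chi < i < q$ together with Schur positivity constrain only the Hodge numbers $h^{d,j}$; Hodge numbers do not determine a variety up to birational equivalence, and no classification of the equality case exists. What the paper actually establishes is only the first nontrivial instance of the conjecture (Proposition \ref{chi_2}, an argument of Coand\u a): for $\chi = 2$, $q \ge 5$ one computes $H^1_*(\FF)$ explicitly from \eqref{BGG.Lin.Resoln} (whose left end is a twisted Euler sequence, so Bott's formula applies) and invokes the Mohan Kumar--Rao theorem \cite{mkr}, which excludes $0$-regular rank-$2$ bundles on $\PP^n$, $n \ge 4$, with that intermediate cohomology module. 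That is exactly the kind of ``genuinely sharpened splitting criterion'' you call for --- but it is available only in rank $2$, which is why the general statement remains open.
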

\noi The thinking here is that if equality were to hold in \eqref{CdF.Ineq}, then the BGG-sheaf $\FF$ would provide a non-split vector bundle of small rank on the projective space $\PP$. But these should almost never exist. The fact that $\FF$ admits a linear resolution, and the resulting relations in Theorem \ref{Intro.Chern.Ineq.Thm} (ii), should provide even more constraints.

As an example in this direction, one has  the following, whose proof was shown to us by I. Coand\u a.
\begin{proposition}\label{chi_2}
Let $X$ be a  compact K\"ahler manifold with no irregular fibrations, such that $\chi(\omega_X) = 2$ and $q(X) \ge 5$. 
Then $ q(X) - {\rm dim}~X < 2$.
\end{proposition}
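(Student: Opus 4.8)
The plan is to argue by contradiction, assuming that $q - \dim X = 2$, and to show that this is incompatible with the existence of the BGG bundle $\FF$ once $q \ge 5$. Write $d = \dim X$ and $n = q - 1$, so that $n \ge 4$ and, under our assumption, $d = n - 1$. Since $X$ carries no irregular fibrations, Proposition \ref{basic} guarantees that $\FF$ is a vector bundle on $\PP = \PP^n$ of rank $\chi(\omega_X) = 2$, globally generated, and with the linear resolution \eqref{BGG.Lin.Resoln}; because $d = n-1$ this resolution has its left-most term equal to $\OO_{\PP}(-(n-1))$. Note also that $\FF$ is $0$-regular, so that $c_1 = c_1(\FF) = \gamma_1$ and $c_2(\FF) = \gamma_2$ are its only non-zero Chern classes, while part (ii) of Theorem \ref{Stated.Thm.Section.3} forces $\gamma_3 = \cdots = \gamma_n = 0$.

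The first step is to record that $\FF$ cannot split as a sum of line bundles. Indeed the differentials in \eqref{BGG.Lin.Resoln} are given by linear forms, so the resolution is minimal; were $\FF$ a direct sum of line bundles its minimal resolution would have length $0$, contradicting the presence of the genuine left-most term $\OO_{\PP}(-(n-1))$ (which is there since $\HH{0}{X}{\OO_X} \ne 0$). Equivalently, chasing \eqref{BGG.Lin.Resoln} shows $\HH{1}{\PP}{\FF(\ell)} \ne 0$ for $\ell \ll 0$, so $\FF$ has non-trivial intermediate cohomology and is non-split by Horrocks' criterion. This is precisely the borderline of the Evans--Griffith--Ein splitting criterion used in the proof of Theorem \ref{Stated.Thm.Section.3}(iii): there splitting was forced by the condition $\chi \le n - d$, whereas here $\chi = 2 = (n-d)+1$, so that criterion just fails to apply and one must exploit features special to rank two.

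The heart of the argument, for which I would follow Coand\u a, is then to rule out such a non-split rank-two bundle on $\PP^n$ for $n \ge 4$. The key additional tool is the self-duality $\FF^\vee \cong \FF(-c_1)$ available in rank two: dualizing the locally free resolution \eqref{BGG.Lin.Resoln} and twisting by $\OO_{\PP}(c_1)$ produces a linear co-resolution
\[ 0 \lra \FF \lra \OO_{\PP}(c_1)\otimes \HH{d}{X}{\OO_X}^\vee \lra \OO_{\PP}(c_1+1)\otimes \HH{d-1}{X}{\OO_X}^\vee \lra \cdots \lra \OO_{\PP}(c_1+n-1) \lra 0, \]
and splicing it with \eqref{BGG.Lin.Resoln} yields a self-dual exact complex of sums of line bundles on $\PP^n$. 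One then combines the explicit Chern data $c(\FF) = \gamma(X;t)$ and the vanishing $\gamma_3 = \cdots = \gamma_n = 0$ with the constraints imposed by this monad-type complex, and invokes the non-existence of a rank-two bundle on $\PP^n$ ($n \ge 4$) carrying these invariants together with this linear resolution, to reach a contradiction. I expect this last step to be the main obstacle: it lies exactly beyond the reach of the numerical inequality (iii), and it is here that both the hypothesis $q \ge 5$ (i.e.\ $n \ge 4$, the regime in which such small-rank bundles are essentially nonexistent, whereas on $\PP^3$ they do occur) and the rigidity coming from rank-two self-duality are essential.
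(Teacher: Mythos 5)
Your setup is correct and matches the paper's (contradiction from $q - \dim X = 2$, so $d = n-1$ with $n = q-1 \ge 4$, and $\FF$ a globally generated rank-$2$ bundle with the linear resolution \eqref{BGG.Lin.Resoln}), but the heart of your argument is a genuine gap: the "non-existence of a rank-two bundle on $\PP^n$ ($n \ge 4$) carrying these invariants together with this linear resolution" is exactly the statement that needs to be proved, and it cannot be invoked as a known general fact. Non-split rank-two bundles on $\PP^4$ do exist (the Horrocks--Mumford bundle), and their non-existence for larger $n$ is Hartshorne's open conjecture; so neither your non-splitting observation nor an appeal to "the regime in which such small-rank bundles are essentially nonexistent" can close the argument. (A side error: your claim that $\HH{1}{\PP}{\FF(\ell)} \ne 0$ for $\ell \ll 0$ is false --- for any vector bundle on $\PP^n$, $n \ge 2$, Serre duality kills $H^1$ of very negative twists; as the paper shows, the intermediate cohomology of $\FF$ is non-zero in exactly one twist.)

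What the paper does instead is a concrete computation that your proposal is missing. Because $q - d = 2$, $h^0(\OO_X)=1$ and $h^1(\OO_X) = q = n+1$, the resolution \eqref{BGG.Lin.Resoln} begins with $\OO_{\PP^n}(-n+1) \to \OO_{\PP^n}(-n+2)^{n+1}$, which is a twist of the Euler sequence, so the first syzygy sheaf is $T_{\PP^n}(-n+1)$. Chasing the resolution then gives
\[
\HH{1}{\PP^n}{\FF(i)} \ = \ \HH{n-1}{\PP^n}{T_{\PP^n}(-n+1+i)},
\]
and the Bott formula together with Serre duality shows this is non-zero precisely when $i = -2$. Thus $\FF$ is a $0$-regular rank-$2$ bundle whose module $H^1_*(\FF)$ is non-zero in degrees $-2$ and higher, and the contradiction comes from a specific published non-existence theorem with exactly these hypotheses: Mohan Kumar--Rao \cite{mkr}, Theorem 1.7, which rules out such bundles when $n \ge 4$. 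Your self-duality/splicing idea ($\FF^\vee \cong \FF(-c_1)$ and the spliced self-dual complex) is not used in the paper and, as written, does not produce a contradiction by itself; without identifying a concrete criterion like the Mohan Kumar--Rao theorem and verifying its hypotheses (which requires the $H^1_*$ computation above), the proposal does not constitute a proof.
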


\begin{proof}
Assume for a contradiction that $q(X) - \dim X = 2$, and consider yet again the BGG resolution  \eqref{BGG.Lin.Resoln} of $\FF$. This resolution shows first of all that $\FF$ is $0$-regular in the sense 
of Castelnuovo-Mumford. We next claim that $\HH{1}{\PP}{\FF(-2)} \ne 0$, while $\HH{1}{\PP}{\FF(i)} = 0$ for all $i \neq - 2$. Grant this for the moment. Then the $S$-module $H^1_* (\FF)$ has a non-zero summand in degrees $-2$ and higher. But \cite{mkr} Theorem 1.7 asserts that  a $0$-regular rank $2$ bundle with this property cannot exist when $n \ge 4$. As for the claim, note that \eqref{BGG.Lin.Resoln}  starts 
on the left with a twist of the  Euler sequence, and thus the cokernel of the 
injection $ \OO_{\PP^n}  (-n+1) \rightarrow \OO_{\PP^n}^{n+1} (-n +2)$ is $T_{\PP^n}(-n+1)$. One then finds that 
\[
\HH{1}{\PP^n}{\FF(i)} \ = \ \HH{n-1}{\PP^n}{ T_{\PP^n}(-n+1 + i)},\]
and the assertion  follows from the Bott formula (cf. \cite{oss} p.8--9) and Serre duality.
\end{proof}

\begin{example} [Surfaces and the Tango bundle]
The case of surfaces is particularly amusing from the present point of view.  When $\dim X = 2$,  \eqref {CdF.Ineq}
 is equivalent to the classical Castelnuovo-de Franchis inequality 
 \[ p_g(X) \ \ge \ 2q(X) -3, \] which holds for surfaces with no irrational pencils of genus at least $2$. 
As soon as $q(X) \ge 4$ it has been suggested (cf. e.g. \cite{mlp}) --  and proved by Pirola for $q(X) = 5$ --  that there should be no such surfaces satisfying $p_g (X) = 2q(X) -3$.  If such a surface were to exist, its BGG bundle $\FF$ would have a resolution:
$$0\longrightarrow \OO_{\PP^n} (-2) \longrightarrow \OO_{\PP^n}(-1)^{n+1}  \longrightarrow \OO_{\PP^n}^{2n-1} \longrightarrow \FF \longrightarrow 0$$
where $n = q(X) - 1 \ge 3$.   On the other hand,  for every $n\ge 3$ there does exist  a vector bundle having this shape: it is the dual of the \emph{Tango bundle} (cf. \cite{oss} Ch.I, \S4.3). It would be quite  interesting to decide one way or the other whether one can in fact realize the dual Tango bundle as the BGG bundle of a surface.
\end{example}

Finally, we discuss a strengthening of the inequality  \eqref{CdF.Ineq}, also appearing in \cite{PP2}, which involves the twisted BGG complexes introduced in Variant \ref{twisted}.  

As always, let $X$ be a compact K\"ahler manifold, and fix any point $\alpha \in \Picc^0(X)$. Following \cite{PP2}, one defines the \textit{generic vanishing index} of $\omega_X$ at $0 \in \Picc^0(X)$ to be the integer
\[
\tn{gv}_0(X) \ = \ \min_{i > 0} \,  \{ \codim_{0} \, V^i(\omega_X) - i \}.
\]
 The basic generic vanishing theorems assert that $\tn{gv}_0(X) \ge 0$ when $X$ has maximal Albanese dimension, and if $0$ is an isolated point of $V^i(\omega_X)$ for every $i > 0$ then 
\[
\tn{gv}_0(X) \ = \ q(X) \, - \, \dim(X). 
\]
The following statement, which appeared as Corollary 4.1 in \cite{PP2}, therefore generalizes Theorem \ref{Intro.Chern.Ineq.Thm} (iii).

\begin{variant}\label{gv_index}
Assume that $X$ has maximal Albanese dimension. Then
\[
\chi (X, \omega_X) \ \ge \ \tn{gv}_0(X).
\]
\end{variant}
\begin{proof}[Brief Sketch of Proof]
The origin belongs to all the $V^i(\omega_X)$ (cf.\ \cite{el}, Lemma 1.8), and hence there exist a largest index $t >0$, and an irreducible component $Z \subseteq V^t(X)$, such that $ \tn{gv}_0(X) = \codim Z - t$. Choose a general point $\alpha \in Z$ and consider the twisted BGG complex $\underline{\bf L}(P_\alpha)$, giving a resolution of the indicated sheaf $\FF_\alpha$:
\begin{multline}
 0\lra \OO_{\PP }(-t)\otimes H^{d-t} (X, \alpha) \lra  \OO_{\PP } (-t+1)\otimes H^{d-t+1} (X, \alpha)\lra \ldots   \\ \lra \OO_{\PP} (-1) \otimes H^{d-1}(X, \alpha)\lra \OO_{\PP } \otimes H^d (X, \alpha)\lra \FF_\alpha \lra 0.   \tag{*}
\end{multline}
The sheaf $\FF_{\alpha}$ is typically not locally free. But if one chooses a subspace 
\[ W\ \subseteq \ T_{\alpha} \Picc^0(X)  \ = \ \HH{1}{X}{\OO_X} \]
transverse to the tangent space of $Z$ at $\alpha$,  and restricts (*) to the projectivization $\PP^\pr = \PPP W$ of $W$, then it follows as in \cite{el}, Theorem 1.2, that one gets an exact complex 
\begin{multline}
 0\lra \OO_{\PP^\pr }(-t)\otimes H^{d-t} (X, \alpha) \lra  \OO_{\PP^\pr } (-t+1)\otimes H^{d-t+1} (X, \alpha)\lra \ldots  \\ \lra \OO_{\PP^\pr} (-1) \otimes H^{d-1}(X, \alpha)\lra \OO_{\PP^\pr } \otimes H^d (X, \alpha)\lra \mathcal{G} \lra 0    \tag{**}
\end{multline}
where $\mathcal{G}$ is a vector bundle, of rank $\chi(X, \omega_X)$. 
Note that 
\[
\dim \PP^\pr \ = \ \codim Z - 1 \ = \ \tn{gv}_0(\omega_X) + t - 1.
\]
Now the argument proceeds much as in the proof of Theorem \ref{Stated.Thm.Section.3}, using (**) in place of \eqref{BGG.Lin.Resoln}.
\end{proof}

\begin{remark} [Non-trivial isolated points] \label{Non-Triv.Isol.Pts}
A similar argument gives yet another variant, which was also noted in \cite{PP2}.
\begin{quote}
Suppose that $\alpha \in \Picc^0(X)$ is a point having the property that for every $i > 0$ either $\alpha \not \in V^i(\omega_X)$ or else $\alpha$ is an isolated point of $V^i(\omega_X)$. Assume furthermore  $\HH{p}{X}{\alpha} \ne 0$ for some $p <d$, and let  $ p(\alpha)$ be the least index for which this holds. Then 
\[  \chi(X, \omega_X) \ \ge \ q(X) - \dim X + p(\alpha).\]
\end{quote}
Since evidently $p (\alpha) > 0$ if $\alpha \ne 0$, this means that non-trivial isolated points improve the basic lower bound for the Euler characteristic.
\end{remark}


\section{The BGG Sheaf and Paracanonical Divisors}

In this section we study the geometric meaning of the BGG sheaf $\FF$, proving Theorem \ref{Intro.Normal.Cone.Thm}. As always, $X$ is a compact  K\"ahler manifold of dimension $d$, but we do not exclude the possibility that it carries irregular fibrations. Keeping the notation from the Introduction, $\tn{Div}^{\{ \omega \}}(X)$ denotes the space of divisors on $X$ lying over $\Picc^{\{ \omega \}}(X)$, and $\linser{\omega_X} \subseteq\Picc^{\{ \omega \}}(X)$ is the canonical series.

The first point is to relate the Hilbert scheme of paracanonical divisors 
$\tn{Div}^{\{ \omega \}}(X)$ to a suitable direct image of the Poincar\'e bundle on $X \times \Picc^0(X)$. 
\begin{proposition} \label{Divv}
Let $\mathcal{P}$ denote the normalized Poincar\'e bundle on $X \times \Picc^0(X)$.  Then
\[
\tn{Div}^{\{ \omega \}}(X) \ = \ \PP \big( (-1)^* R^d p_{2*}\mathcal{P}\big) 
\]
as schemes over $\Picc^0(X)$, where $(-1) : \Picc^0(X) \lra \Picc^0(X)$ is the morphism given by  multiplication by $-1$, and $p_1, p_2$ are the projections of $X \times \Picc^0(X)$ onto its factors.  
\end{proposition}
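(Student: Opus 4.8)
The plan is to prove the identification by matching functors of points over $\Picc^0(X)$ and invoking Yoneda, with relative Serre duality serving as the bridge that explains the appearance of the top direct image $R^d p_{2*}$ and of the twist by $(-1)^*$. First I would recall the functor represented by each side. The Douady space $\tn{Div}^{\{\omega\}}(X)$ represents the functor sending $g : T \to \Picc^0(X)$ to the set of relative effective Cartier divisors $\mathcal{D} \subseteq X \times T$, flat over $T$, whose fiberwise isomorphism class lies in the component $\{\omega\}$ (here I use the trivialization $\Picc^{\{\omega\}}(X) \cong \Picc^0(X)$, $\alpha \mapsto \omega_X \otimes \alpha$, that makes the structural map land in $\Picc^0(X)$). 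On the other side, with Grothendieck's convention $\PP(\mathcal{E}) = \tn{Proj}_{\Picc^0}(\tn{Sym}\,\mathcal{E})$ — the same one fixed for $\PP(\FF)$ in the Introduction — the scheme $\PP\big((-1)^* R^d p_{2*}\mathcal{P}\big)$ represents the functor of invertible quotients $g^*(-1)^* R^d p_{2*}\mathcal{P} \twoheadrightarrow L$; by EGA II this functor is represented even when the sheaf is merely coherent, and it commutes with base change.

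Next I would run the fiberwise computation to pin down all the conventions before promoting it to families. Over a point $\beta \in \Picc^0(X)$, identified with $[\omega_X \otimes \beta]$, the fiber of $\tn{Div}^{\{\omega\}}(X)$ is the set of one-dimensional subspaces of $\HH{0}{X}{\omega_X \otimes \beta}$. On the other side, base change in top relative degree (automatic, since $R^{d+1}p_{2*} = 0$) gives $(R^d p_{2*}\mathcal{P}) \otimes k(\alpha) \cong \HH{d}{X}{\alpha}$, while Serre duality on $X$ together with $\alpha^\vee = \alpha^{-1}$ yields $\HH{d}{X}{\alpha}^\vee \cong \HH{0}{X}{\omega_X \otimes \alpha^{-1}}$. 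Evaluating $(-1)^* R^d p_{2*}\mathcal{P}$ at $\beta$ means evaluating at $\alpha = \beta^{-1}$, and the Grothendieck convention takes lines in the dual fiber; both fibers therefore equal the lines in $\HH{0}{X}{\omega_X \otimes \beta}$. This explains precisely why the statement requires $R^d$, the bundle $\mathcal{P}$ rather than $p_1^*\omega_X \otimes \mathcal{P}$, and the twist by $(-1)^*$.

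The substance is to make this functorial. Starting from a relative divisor $\mathcal{D}$ over $T$, the universal property of the Picard scheme supplies the structural map $T \to \Picc^0(X)$ together with an isomorphism $\OO_{X \times T}(\mathcal{D}) \cong (1 \times g)^*\big(p_1^*\omega_X \otimes \mathcal{P}\big) \otimes p_T^* N$ for some line bundle $N$ on $T$, and the tautological section of $\OO(\mathcal{D})$ becomes a section of this twist. Relative Serre duality for the smooth proper morphism $p_2$ of relative dimension $d$, with relative dualizing sheaf $p_1^*\omega_X$ and the symmetry $\mathcal{P}^{-1} \cong (1 \times (-1))^*\mathcal{P}$, converts such a section into an invertible quotient of $g^*(-1)^* R^d p_{2*}\mathcal{P}$, and conversely; flatness and effectivity of $\mathcal{D}$ match exactly the conditions that the quotient be invertible and the map surjective. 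Carrying this correspondence out naturally in $T$ gives the isomorphism of functors, hence of schemes over $\Picc^0(X)$.

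The main obstacle is that the naive direct image $p_{2*}\big(p_1^*\omega_X \otimes \mathcal{P}\big)$ is neither locally free nor compatible with base change along the cohomological support loci $V^i(\omega_X)$, so one cannot simply declare $\tn{Div}^{\{\omega\}}(X)$ to be the projective bundle of a vector bundle. The device that circumvents this — and the reason the statement is phrased through $R^d p_{2*}\mathcal{P}$ — is that the top relative direct image always commutes with base change, while $\tn{Proj}\,\tn{Sym}$ represents the invertible-quotient functor for arbitrary coherent sheaves. Accordingly, the delicate point I would need to handle with care is arranging relative Serre duality in a form compatible with \emph{arbitrary} base change $T$, rather than merely fiberwise, so that the passage between sections and invertible quotients is genuinely natural in $T$.
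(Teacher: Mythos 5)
Your proposal is correct in outline, and the engine driving it --- Grothendieck--Serre duality for $p_2$, the symmetry $(1\times(-1))^*\mathcal{P}\cong\mathcal{P}^\vee$ (which is where the $(-1)^*$ comes from in both arguments), and the observation that only the top direct image survives in degree zero --- is exactly the engine of the paper's proof; but the packaging is genuinely different, and the difference matters for how much you have to prove. The paper never touches the functor of points of $\tn{Div}^{\{\omega\}}(X)$: it quotes Kleiman's theorem (\cite{kleiman}, Theorem 3.13), which supplies a unique coherent sheaf $\mathcal{Q}$ on $\Picc^0(X)$ with $\Shomm{\mathcal{Q},\mathcal{E}}\cong p_{2*}(p_1^*\omega_X\otimes\mathcal{P}\otimes p_2^*\mathcal{E})$ naturally in $\mathcal{E}$, together with the identification $\tn{Div}^{\{\omega\}}(X)=\PP(\mathcal{Q})$; the entire proof then collapses to the sheaf identity $\mathcal{Q}\cong(-1)^*R^dp_{2*}\mathcal{P}$, established over the \emph{fixed} base $\Picc^0(X)$ by the projection formula, the commutation of Grothendieck duality with integral functors (\cite{PP1}, Lemma 2.2, valid on complex manifolds by \cite{rrv}), and a spectral sequence showing that only $\Shomm{(-1)^*R^dp_{2*}\mathcal{P},\mathcal{E}}$ contributes to the degree-zero term. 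You instead re-derive the representability by matching $T$-points, which is precisely why you are forced to prove relative duality in a form natural in an \emph{arbitrary} base $T$ --- the point you yourself flag as delicate; in effect, your route re-proves Kleiman's theorem inline. What you gain is geometric transparency (divisors $\leftrightarrow$ fiberwise-nonzero sections $\leftrightarrow$ invertible quotients, so the roles of $R^d$ and of $(-1)^*$ are visible at each step); what the paper gains is economy, since all base-change difficulties are absorbed into naturality in $\mathcal{E}$ over the fixed base and no duality over auxiliary bases is ever needed. Two cautions if you carry your version out: first, $X$ is only a compact K\"ahler manifold, so the whole argument must run in the analytic category --- Douady space rather than Hilbert scheme, analytic $\tn{Proj}$, and the duality theorem of Ramis--Ruget--Verdier \cite{rrv} rather than scheme-theoretic (EGA) duality; second, the ``relative Serre duality'' you invoke must be used in derived form, and extracting the sheaf-level statement $p_{2,T*}\big((1\times g)^*(p_1^*\omega_X\otimes\mathcal{P})\otimes p_T^*N\big)\cong\Shomm{g^*(-1)^*R^dp_{2*}\mathcal{P},\,N}$ from it still requires the truncation step (the complex $\mathbf{L}g^*(-1)^*\RR p_{2*}\mathcal{P}[d]$ has cohomology only in degrees $\le 0$, with $\mathcal{H}^0=g^*(-1)^*R^dp_{2*}\mathcal{P}$ by right-exactness of $g^*$), i.e.\ the same spectral-sequence argument the paper performs --- so ``the top direct image commutes with base change'' is the right slogan but not, by itself, the whole argument.
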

\noi We will provide a formal proof shortly, but for a quick plausibility argument note that if $\alpha \in \Picc^0(X)$ is any point,  then the fibre of $\PP ( (-1)^* R^d p_{2*}\mathcal{P}) $ over $\alpha$  is the projective space of one-dimensional quotients of  $\HH{d}{X}{ \alpha^{-1}} $, which thanks to Serre duality is   identified with the projective space parametrizing divisors in the linear series $\linser{\omega_X \otimes\alpha}$. 
Granting Proposition \ref{Divv} for the time being, we complete the
\begin{proof}[Proof of Theorem \ref{Intro.Normal.Cone.Thm}] It is enough to establish the stated isomorphism after pulling back by the exponential map $\tn{exp} : V =  \HH{1}{X}{\OO_X} \lra \Picc^0(X)$, which is \'etale. Then the results of \cite{gl2} quoted in the proof of Proposition \ref{BGG.Exactness.Prop} imply that $\tn{exp}^*\big( R^d p_{2*} \mathcal{P}\big) $ is isomorphic in a neighborhood of the  origin to the cokernel of map
\[
u : \HH{d-1}{X}{\OO_X} \otimes \OO_{\mathbf V} \lra \HH{d}{X}{\OO_X} \otimes \OO_{{\mathbf V}}
\] 
of sheaves on the affine space  $\mathbf{V} = \CC^q$ arising from the right-most terms of the BGG complex. Note that $u$ is given by a matrix of linear forms, and pulling back by $(-1)$ just  multiplies the entries of this matrix by $-1$. The theorem then reduces to a general statement, appearing in the following lemma, concerning the projectivization of the cokernel of a map of trivial vector bundles on affine space defined by a matrix of linear forms. 
\end{proof}
\begin{lemma}
Let $u$ be an $a \times b$ matrix of linear forms on a vector space $\CC^q$, defining maps 
\[
u: \OO_{\CC^q}^a \lra \OO_{\CC^q}^b \ \ , \ \ \overline{u} : \OO_{\PP^{q-1}}(-1)^a \lra \OO_{\PP^{q-1}}^b,
\]
and set 
$ \mathcal{A} =\tn{coker}(u) \ ,  \ \overline{\mathcal{A}} = \tn{coker} (\overline{u})$.
Consider the subscheme 
\[  \PP(\mathcal{A}) \ \subseteq \ \CC^q \times \PP^{b-1}, \]
whose fibre $F$ over the origin $0 \in \CC^q$ is a copy of $\PP^{b-1}$. Then the projectivized normal cone to $F$ in $\PP(\mathcal{A})$ is identified with $\PP(\overline{\mathcal{A}})$ via the natural projection $\PP(\overline{\mathcal{A}}) \lra \PP^{b-1}$.  \end{lemma}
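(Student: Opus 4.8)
The plan is to work out both sides explicitly in terms of the matrix $u$ and its linearization, reducing the geometric claim about normal cones to a concrete statement about ideals of minors, or more precisely about the associated graded structure. Let me first fix notation: write $u = (u_{ij})$ where each $u_{ij} = \sum_k c_{ij}^k x_k$ is a linear form in the coordinates $x_1, \dots, x_q$ on $\CC^q$. The cokernel $\mathcal{A}$ is then the module $\OO_{\CC^q}^b / \tn{im}(u)$, and $\PP(\mathcal{A}) = \tn{Proj}_{\CC^q}\big(\tn{Sym}(\mathcal{A})\big)$ embeds in $\CC^q \times \PP^{b-1}$. Concretely, if $y_1, \dots, y_b$ are the homogeneous coordinates on $\PP^{b-1}$ dual to the standard basis of $\CC^b$, then the equations of $\PP(\mathcal{A})$ inside $\CC^q \times \PP^{b-1}$ are precisely the $a$ bilinear relations
\[
\sum_{j=1}^{b} u_{ij}(x)\, y_j \ = \ 0 \qquad (i = 1, \dots, a),
\]
each being bilinear of bidegree $(1,1)$ in the $x$'s and $y$'s. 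The fibre $F$ over the origin is the locus $x = 0$, a copy of $\PP^{b-1}$, since every $u_{ij}$ vanishes at $0$.

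**Identifying the normal cone.**

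First I would describe the projectivized normal cone to $F$ in $\PP(\mathcal{A})$ directly. Since $F$ is cut out inside $\PP(\mathcal{A})$ by the ideal generated by $x_1, \dots, x_q$ (the pullback of the maximal ideal at $0 \in \CC^q$), the normal cone is $\tn{Spec}$ of the associated graded ring $\bigoplus_{m \ge 0} I_F^m / I_F^{m+1}$ where $I_F$ is this ideal. The key observation is that the defining equations are \emph{linear} in $x$: each relation $\sum_j u_{ij}(x) y_j = 0$ is homogeneous of degree exactly one in the $x$-variables. Consequently, passing to the associated graded with respect to $I_F$ replaces each $x_k$ by its leading symbol, and the relations survive verbatim as relations of degree one in these symbols, with coefficients that are the \emph{same} linear forms now read in the normal directions. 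The projectivized normal cone over $F$ is therefore governed, fibrewise over each point $[y] \in F = \PP^{b-1}$, by the linear system $\sum_j u_{ij}(\xi) y_j = 0$ in the normal coordinate $\xi$, which is exactly the transpose/adjoint linear algebra data.

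**Matching with $\PP(\overline{\mathcal{A}})$ and the main obstacle.**

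On the other side, $\overline{\mathcal{A}} = \tn{coker}(\overline{u})$ lives on $\PP^{q-1}$, and $\PP(\overline{\mathcal{A}}) \subseteq \PP^{q-1} \times \PP^{b-1}$ is cut out by the \emph{same} bilinear equations $\sum_j u_{ij}(x) y_j = 0$, now with $[x] \in \PP^{q-1}$ homogeneous and $[y] \in \PP^{b-1}$ homogeneous. The projection $\PP(\overline{\mathcal{A}}) \to \PP^{b-1}$ records, over each $[y]$, the locus of $[x]$ satisfying these equations, which is the projectivization of the same linear-algebra kernel data appearing in the normal cone computation. The plan is thus to construct the identification by matching the bihomogeneous coordinate rings: the associated graded ring of $\PP(\mathcal{A})$ along $F$ is, by the linearity of the equations in $x$, isomorphic to the bihomogeneous coordinate ring of the incidence scheme in $\PP^{q-1} \times \PP^{b-1}$ defined by the $u_{ij}$, which is exactly $\PP(\overline{\mathcal{A}})$. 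I expect the main obstacle to be the scheme-theoretic bookkeeping: one must verify not merely a set-theoretic bijection but an isomorphism of schemes, which requires checking that the ideal of the normal cone is generated precisely by the leading forms of the bilinear equations and that no extra associated-graded relations (coming from syzygies among the $u_{ij}$, i.e.\ from the columns of $u$ over the $y$-variables) are introduced or lost in passing to $I_F^m/I_F^{m+1}$. The cleanest way around this is to exploit that the ideal of $\PP(\mathcal{A})$ is generated by elements of $x$-degree exactly one, so that it is already ``homogeneous of degree one'' in the filtration by $I_F$; this makes the associated graded ideal equal to the ideal itself under the obvious regrading, giving the isomorphism functorially and bypassing any delicate flatness or generation issue.
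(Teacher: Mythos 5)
Your proposal is correct and takes essentially the same route as the paper: both arguments rest on the observation that the defining equations of $\PP(\mathcal{A})$ in $\CC^q \times \PP^{b-1}$ are bilinear, so that the ideal of $\PP(\mathcal{A})$ is homogeneous for the $x$-grading (equivalently, $\PP(\mathcal{A})$ is itself a cone over $F$), whence the associated graded ring along $F$ is the same ring regraded and its relative Proj is cut out by the same equations in $\PP^{b-1} \times \PP^{q-1}$, i.e.\ is $\PP(\overline{\mathcal{A}})$. The only difference is one of packaging: the paper makes the cone structure explicit by transposing the matrix — identifying $\PP(\mathcal{A})$ with the total space $\mathbf{V}(\overline{\mathcal{B}})$ of the sheaf $\overline{\mathcal{B}}$ on $\PP^{b-1}$ defined by the same bilinear forms — and then quotes EGA II, 8.7 for the exceptional divisor of the blow-up of a cone along its zero section, whereas you prove exactly that fact by hand via the initial-ideal computation (your homogeneity argument is precisely what guarantees no extra initial forms arise from syzygies, so your resolution of the ``main obstacle'' is sound).
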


\begin{proof}
An $a \times b$ matrix $u$ of linear forms on $\CC^q$ gives rise to an $a \times q$ matrix $\overline{w}$ of linear forms on $\PP^{b-1}$ having the property that if $\overline{ \mathcal{B}}$ is the cokernel of the resulting map 
\[  \overline{w} : \OO_{\PP^{b-1}}(-1)^a \lra  \OO_{\PP^{b-1}}^q, \]
then 
$ \PP(\mathcal{A})  \cong  \mathbf{V}(\overline{\mathcal{B}})  $ as subschemes of $ \PP^{b-1} \times \CC^q$.
(In fact,  $\overline{w}$ is constructed so that  $\PP(\mathcal {A})$ and $\mathbf{V}(\overline{\mathcal{B}})$ are defined in $ \PP^{b-1} \times \CC^q$ by the same equations.) Under this identification, the issue is to determine the projectivized normal cone to $\mathbf{V}(\overline{ \mathcal{B}})$ along its zero section; equivalently, one needs to determine the exceptional divisor in the blow-up of  $\mathbf{V}(\overline{ \mathcal{B}})$ along this zero-section. But by virtue of \cite[8.7]{EGAII} this exceptional divisor is $\PP(\overline{ \mathcal{B}})$, and  this in turn coincides  with $\PP(\overline{\mathcal{A}})$ as a subscheme of $\PP^{b-1} \times \PP^{q-1}$. 
\end{proof}

It remains only to give the
\begin{proof}[Proof of Proposition \ref{Divv}] As explained in \cite{kleiman}, Theorem 3.13, there is a unique coherent sheaf $\mathcal{Q}$ on $\Picc^0(X)$ characterized by the property that
\begin{equation}\Shomm{\mathcal{Q}, \mathcal{E}} \ = \ p_{2*}(p_1^*\omega_X \otimes \mathcal{P} \otimes p_2^*\mathcal{E}) \tag{*}\end{equation}
for any sheaf $\mathcal{E}$ on $\Picc^0(X)$, and then $\tn{Div}^{\{ \omega \}}(X)  = \PP(\mathcal{Q})$. So we need to establish that (*) holds with $\mathcal{Q} = (-1)^* R^d p_{2*}\mathcal{P}$. To this end, denote as usual by $\RR\Phi_{\mathcal{P}}(\mathcal{G}) \ = \ \RR p_{2*}(p_1^* \mathcal{G} \otimes \mathcal{P}) $ the Fourier-Mukai transform of a sheaf $\mathcal{G}$ on $X$. By the projection 
formula, one has
\[
\RR p_{2*}(p_1^*\omega_X \otimes \mathcal{P} \otimes p_2^*\mathcal{E})
\ \cong \ \RR\Phi_{\mathcal{P}}(\omega_X)  \overset{\mathbf{L}}{\otimes} \mathcal{E},\]
and we claim that it suffices to prove the derived formula
\begin{equation}
\RR \Phi_{\mathcal{P}}\omega_X \overset{\mathbf{L}}{\otimes} \mathcal{E}  \ \cong \ \RR \Shomm{(-1)^* \RR \Phi_{\mathcal{P}} \OO_X [d], \OO_{\Picc^0(X)}}\overset{\mathbf{L}}{\otimes} \mathcal{E} .\tag{**}
\end{equation}
Indeed, suppose that (**) is known. Now 
\begin{equation}
\RR \Shomm{(-1)^* \RR \Phi_{\mathcal{P}} \OO_X [d], \OO_{\Picc^0(X)}}\overset{\mathbf{L}}{\otimes} \mathcal{E} \ \cong  \ \RR \Shomm{(-1)^* \RR \Phi_{\mathcal{P}} \OO_X [d], \mathcal{E}}, \tag{***}\end{equation}
so the right-hand side of (*) is computed as the $0^{\text{th}}$ cohomology sheaf of the right-hand side in (***). 
 But there is a   spectral sequence 
  \[ E_2^{p,q} \ =  \ \mathcal{E}xt^p\big((-1)^*R^{d-q}p_{2*}\mathcal{P}, \mathcal{E}\big) \Rightarrow
   \ R^{p+q} \Shomm{(-1)^* \RR \Phi_{\mathcal{P}} \OO_X [d], \mathcal{E}}\] 
 with $p \ge 0$ and $q \le 0$.  For degree reasons only $\Shomm{(-1)^* R^d p_{2*}\mathcal{P}, \mathcal{E}} $ contributes to 
 the $0^{\text{th}}$ term, so we get the required identity of sheaves. 

So it remains only to prove (**), for which it suffices to establish that
\[
\RR \Phi_{\mathcal{P}} \omega_X \ \cong  \ \RR \Shomm{(-1)^* \RR \Phi_{\mathcal{P}} \OO_X [d], \OO_{\Picc^0(X)}} \ \cong \ \big( \RR \Phi_{\mathcal{P}^\vee} \OO_X \big)^\vee [-d].\]
But this is precisely the commutation of Grothendieck duality with integral functors (see for instance \cite{PP1} Lemma 2.2\footnote{This is proved in \cite{PP1} in the context of smooth projective varieties, but as indicated in \cite{PP2} 
the same proof works  on complex manifolds, due to the fact that the analogue of Grothendieck duality holds by \cite{rrv}.}).
\end{proof}

\begin{remark}
Note that via the BGG correspondence, one can read off the sheaf $\FF$ just from the structure of $Q_X$ as an $E$-module. Thus Theorem \ref{Intro.Normal.Cone.Thm}
 admits the picturesque corollary that $Q_X$ determines the normal cone to the canonical linear series in $\tn{Div}^{\{ \omega \}}(X)$.
\end{remark}

As an application, we consider the question of whether the canonical series $\linser{\omega_X}$ is an irreducible component of the space 
$\tn{Div}^{\{ \omega \}}(X)$ of paracanonical divisors: following Beauville \cite{beauville}, one says  that $\linser{\omega_X}$ is \textit{exorbitant} if this happens.  
An immediate consequence of Theorem \ref{Intro.Normal.Cone.Thm} is 

\begin{corollary} \label{Exorbitant}
The canonical linear series $\linser{\omega_X}$ is exorbitant if and only if the mapping $\PP(\FF) \lra \PP \big( \HH{d}{X}{\OO_X} \big) =  \linser{\omega_X} 
$ in  \eqref{PF.over.Canon.eqn} fails to be surjective. \qed 
\end{corollary}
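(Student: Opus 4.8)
The plan is to read the statement off from Theorem \ref{Intro.Normal.Cone.Thm} together with a little dimension theory for projectivized normal cones. Write $D = \tn{Div}^{\{ \omega \}}(X)$ and $Z = \linser{\omega_X} \subseteq D$, and let $\II \subseteq \OO_D$ be the ideal sheaf of $Z$. By Theorem \ref{Intro.Normal.Cone.Thm} the morphism \eqref{PF.over.Canon.eqn} is identified with the structural projection $\pi : \PP(\FF) = \PP(C_{Z/D}) \lra Z$ of the projectivized normal cone $C_{Z/D} = \tn{Spec}_Z\big( \oplus_{n \ge 0}\, \II^n/\II^{n+1}\big)$ onto $Z$. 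Since $Z$ is a projective space it is irreducible, and being exorbitant means precisely that $Z$ is one of the irreducible components of $D$. So the task is to show that $\pi$ is surjective if and only if $Z$ is \emph{not} an irreducible component of $D$.

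First I would note that $\pi$ is projective, hence proper, so its image is a closed subset of $Z$; as $Z$ is irreducible, $\pi$ is surjective if and only if its image contains the generic point $\eta$ of $Z$. Thus everything comes down to deciding when $\PP(C_{Z/D})$ has a point over $\eta$, i.e. when the fibre of the cone $C_{Z/D}$ over $\eta$ consists of more than the vertex. That fibre is $\tn{Proj}$ of $\big( \oplus_n \II^n/\II^{n+1}\big)\otimes_{\OO_Z} k(\eta)$, a graded $k(\eta)$-algebra generated in degree one, whose degree-one piece is $\II_\eta \otimes_{\OO_{D,\eta}} k(\eta)$. By the graded Nakayama lemma the whole positive-degree part vanishes exactly when this degree-one piece does, and the latter vanishes exactly when $\II_\eta = 0$ (ordinary Nakayama, $\II_\eta$ being finitely generated over the local ring $\OO_{D,\eta}$). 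Hence $\eta$ fails to lie in $\tn{im}(\pi)$ if and only if $\II_\eta = 0$, that is, if and only if $\OO_{D,\eta} = \OO_{Z,\eta} = k(\eta)$.

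It then remains to match the condition $\II_\eta = 0$ with ``$Z$ is a component of $D$''. If $Z$ is not a component, then $Z$ is strictly contained in some component $W$ of $D$, so $\eta$ is not a minimal point of $D$ and $\dim \OO_{D,\eta} > 0 = \dim \OO_{Z,\eta}$; in particular $\II_\eta \ne 0$, so $\eta \in \tn{im}(\pi)$ and $\pi$ is surjective. Conversely, if $Z$ is a reduced component of $D$, then $\eta$ is a minimal point of $D$ at which $D$ is reduced, so $\OO_{D,\eta} = k(\eta)$, giving $\II_\eta = 0$ and hence $\eta \notin \tn{im}(\pi)$: the image is a proper closed subset of $Z$ and $\pi$ is not surjective. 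This is exactly the asserted equivalence.

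The one point that needs care — and the only real obstacle — is that $\PP(\FF)$ is defined through the scheme structure of $D$, whereas ``irreducible component,'' and so exorbitance, is a purely topological notion. The argument above is clean provided $D$ is reduced at the generic point $\eta$ of $Z$ in the case that $Z$ is a component; if $D$ carried an everywhere-nonreduced (multiplicity) structure along $Z$, then $\II_\eta \ne 0$ even for a component, and the ``only if'' direction would have to be read at the reduced level. I would therefore either invoke generic reducedness of $\tn{Div}^{\{ \omega \}}(X)$ along $\linser{\omega_X}$, or phrase the comparison at the level of the underlying reduced spaces, in which case the equivalence is unconditional. In either case the surjectivity of \eqref{PF.over.Canon.eqn} is governed solely by the vanishing of $\II_\eta$, and Corollary \ref{Exorbitant} follows.
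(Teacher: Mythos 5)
Your overall strategy --- reducing the corollary to a local computation at the generic point $\eta$ of $Z = \linser{\omega_X}$ inside $D = \tn{Div}^{\{ \omega \}}(X)$ via Theorem \ref{Intro.Normal.Cone.Thm} --- is exactly the intended one (the paper records Corollary \ref{Exorbitant} as an immediate consequence of that theorem, with no further argument), but your key commutative-algebra step is wrong, and this error is precisely what manufactures the reducedness problem you cannot resolve at the end. The fibre of $\PP(C_{Z/D}) = \tn{Proj}_Z\big( \oplus_n\, \II^n/\II^{n+1}\big)$ over $\eta$ is empty if and only if the irrelevant ideal of the graded $k(\eta)$-algebra $R = \oplus_n\, \II^n_\eta/\II^{n+1}_\eta$ is \emph{nilpotent}, not if and only if $R_+ = 0$: the $\tn{Proj}$ of a graded ring is empty exactly when every positive-degree element is nilpotent. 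Since $R$ is generated in degree one, $R_+$ is nilpotent iff $R_n = \II^n_\eta/\II^{n+1}_\eta = 0$ for some $n \ge 1$, which by Nakayama is equivalent to $\II_\eta$ being nilpotent --- not to $\II_\eta = 0$. (Test case: $\OO_{D,\eta} = k[\epsilon]/(\epsilon^2)$, $\II_\eta = (\epsilon)$; then $R = k \oplus k\epsilon$ has $R_1 \ne 0$, yet $\tn{Proj}(R) = \emptyset$.) So your asserted criterion ``$\eta \notin \tn{im}(\pi)$ iff $\II_\eta = 0$'' fails exactly in the case of a generically nonreduced structure along $Z$, which is the case your argument then cannot handle.

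With the corrected criterion the corollary is unconditional and your hedge evaporates. Indeed, $\II_\eta$ is nilpotent iff $\II_\eta$ lies in every minimal prime of $\OO_{D,\eta}$, i.e.\ iff every irreducible component of $D$ through $\eta$ is contained in --- hence, being closed and containing $\overline{\{\eta\}} = Z$, equal to --- $Z$, i.e.\ iff $Z$ is an irreducible component of $D$. Therefore $\pi$ fails to be surjective iff $\linser{\omega_X}$ is exorbitant, with no hypothesis whatsoever on the scheme structure of $\tn{Div}^{\{ \omega \}}(X)$; both directions of your argument go through once ``$\II_\eta = 0$'' is replaced by ``$\II_\eta$ nilpotent'' everywhere. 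Note also that neither of your proposed fallbacks is legitimate: generic reducedness of the Hilbert scheme along $\linser{\omega_X}$ is not something one can simply invoke (Hilbert schemes can be generically nonreduced along components), and passing to underlying reduced spaces changes the normal cone, hence changes $\PP(\FF)$ --- Theorem \ref{Intro.Normal.Cone.Thm} identifies $\PP(\FF)$ with the cone attached to the actual scheme structure of $D$, so the statement you would prove that way is not the one asserted.
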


Under some additional hypotheses, the criterion in the Corollary can be checked numerically.
An amusing consequence of this is that in the setting of Theorem \ref{Stated.Thm.Section.3}, the exorbitance of the canonical series actually depends only on the Hodge numbers of $X$. In fact:
\begin{proposition} \label{Exorbitant.Numerical}
Assume that the hypotheses of  Theorem \ref{Intro.Chern.Ineq.Thm} $($or, more generally, Theorem \ref{Stated.Thm.Section.3}$)$ are satisfied, and that 
\begin{equation}
p_g - \chi \  \le \ q -1. \tag{*}
\end{equation}
Then $\linser{\omega_X}$ is exorbitant if and only if the codimension $(p_g - \chi)$ Segre number of $\FF^\vee$ vanishes, i.e. if and only if:
\[
s_{1^{\times (p_g - \chi)}}\big( \gamma_1, \ldots, \gamma_{q-1}\big) \ = \ 0,
\]
where the quantity in question indicates the Schur function associated to the partition $(1, \ldots, 1)$ \tn{(}$p_g - \chi \tn{)}$ times. 
\end{proposition}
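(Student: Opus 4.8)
The plan is to run the criterion of Corollary~\ref{Exorbitant} through intersection theory on $\PP=\PP^{q-1}$ and to recognize the outcome as a Segre number. By Proposition~\ref{basic} the standing hypotheses make $\FF$ a vector bundle of rank $\chi$ fitting into \eqref{BGG.Sheaf.Eqn.Intro}; in particular $\FF$ is a quotient of the trivial bundle $\OO_{\PP}\otimes\HH{d}{X}{\OO_X}$ of rank $p_g$, so $\PP(\FF)$ embeds in $\PP(\OO_{\PP}\otimes\HH{d}{X}{\OO_X})=\PP\times\PP^{p_g-1}$, where $\PP^{p_g-1}=\PP(\HH{d}{X}{\OO_X})=\linser{\omega_X}$ and the morphism \eqref{PF.over.Canon.eqn} is the second projection $\mathrm{pr}_2$. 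By Corollary~\ref{Exorbitant} it therefore suffices to decide when $\mathrm{pr}_2$ fails to be surjective.

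First I would compute the class $[\PP(\FF)]$ in $\PP\times\PP^{p_g-1}$. Set $\mathcal{K}=\ker\!\big(\OO_{\PP}\otimes\HH{d}{X}{\OO_X}\to\FF\big)$, a vector bundle of rank $m:=p_g-\chi$. Inside $\PP(\OO_{\PP}\otimes\HH{d}{X}{\OO_X})$ the subscheme $\PP(\FF)$ is exactly the locus where the universal quotient $\OO_{\PP}^{\,p_g}\to\mathrm{pr}_2^{\,*}\OO_{\PP^{p_g-1}}(1)$ annihilates $\mathcal{K}$, i.e.\ the zero scheme of the induced section of $\mathrm{pr}_1^{\,*}\mathcal{K}^\vee\otimes\mathrm{pr}_2^{\,*}\OO_{\PP^{p_g-1}}(1)$. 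This section is regular, since $\PP(\FF)$ has the expected codimension $m$, so $[\PP(\FF)]=c_m\big(\mathrm{pr}_1^{\,*}\mathcal{K}^\vee\otimes\mathrm{pr}_2^{\,*}\OO_{\PP^{p_g-1}}(1)\big)$. Writing $h$ and $\eta$ for the hyperplane classes of the two factors and using $c(\mathcal{K})=c(\FF)^{-1}=s(\FF)$, so that $c_i(\mathrm{pr}_1^{\,*}\mathcal{K}^\vee)=(-1)^i s_i(\FF)$, expansion of the twisted top Chern class gives
\[
[\PP(\FF)]\ =\ \sum_{j=0}^{m}(-1)^{m-j}\,s_{m-j}(\FF)\,\eta^{\,j},
\qquad s_i(\FF)=\sigma_i\,h^i .
\]

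Next I would detect surjectivity of $\mathrm{pr}_2$ numerically. For a general point $[D]\in\PP^{p_g-1}$, Kleiman transversality in $\PP\times\PP^{p_g-1}$ (for the transitive $\mathrm{PGL}$--action on the second factor) identifies $[\PP(\FF)]\cdot\eta^{\,p_g-1}$ with the class of the fibre $\mathrm{pr}_2^{-1}([D])$. Since $\eta^{\,p_g}=0$, only the $j=0$ term survives, so
\[
[\PP(\FF)]\cdot\eta^{\,p_g-1}\ =\ (-1)^{m}\,s_m(\FF)\,h^{m}\eta^{\,p_g-1}.
\]
By the hypothesis $p_g-\chi\le q-1$ we have $m\le q-1$, hence $h^{m}\eta^{\,p_g-1}\neq0$, and this product vanishes precisely when $(-1)^m s_m(\FF)=0$. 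On the other hand a nonempty fibre is a nonzero effective cycle, while if $\mathrm{pr}_2$ is not surjective the general fibre is empty and the product is $0$. Thus $\mathrm{pr}_2$ is surjective if and only if the codimension-$m$ Segre number $s_m(\FF^\vee)=(-1)^m s_m(\FF)$ of $\FF^\vee$ is nonzero. Combined with Corollary~\ref{Exorbitant} this shows that $\linser{\omega_X}$ is exorbitant iff $s_m(\FF^\vee)=0$; the dual Jacobi--Trudi identity $s_m(\FF^\vee)=s_{1^{\times m}}\big(c_1(\FF),c_2(\FF),\dots\big)$, together with $c_i(\FF)=\gamma_i h^i$, rewrites this as $s_{1^{\times(p_g-\chi)}}(\gamma_1,\dots,\gamma_{q-1})=0$.

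I expect the delicate points to be bookkeeping rather than conceptual. The essential use of the hypothesis $p_g-\chi\le q-1$ is twofold: it yields the dimension inequality $\dim\PP(\FF)\ge\dim\linser{\omega_X}$ without which $\mathrm{pr}_2$ could not be surjective, and it guarantees $h^m\neq0$ on $\PP$, so that the \emph{vanishing of the cohomology class} $[\PP(\FF)]\cdot\eta^{\,p_g-1}$ is equivalent to the \emph{vanishing of the number} $\sigma_m$. The remaining care lies in checking that the defining section of $\PP(\FF)$ is regular (so that its class is the genuine top Chern class), and in the sign and Jacobi--Trudi bookkeeping identifying $s_m(\FF^\vee)$ with the Schur polynomial $s_{1^{\times m}}$ in the Chern classes of $\FF$; these are routine, but must be carried out consistently with the Grothendieck convention for $\PP(\FF)$ used in \eqref{PF.over.Canon.eqn}.
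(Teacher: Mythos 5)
Your proposal is correct and takes essentially the same route as the paper: both reduce, via Corollary \ref{Exorbitant}, to deciding surjectivity of $\PP(\FF)\to\linser{\omega_X}$, and both identify the degree of the preimage of a general point with the Segre number $s_{p_g-\chi}(\FF^\vee)$, with the hypothesis $p_g-\chi\le q-1$ ensuring that the vanishing of this number faithfully detects non-surjectivity. The only difference is computational bookkeeping: the paper reads the Segre number off from the definition (pushforward of powers of $c_1(\OO_{\PP(\FF)}(1))$ under $\PP(\FF)\to\PP$), whereas you recover it by realizing $\PP(\FF)$ as the zero scheme of a regular section of $\mathrm{pr}_1^*\mathcal{K}^\vee\otimes\mathrm{pr}_2^*\OO_{\PP^{p_g-1}}(1)$ and expanding its top Chern class.
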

\noi Observe that if $\chi > 0$ then  $\tn{Div}^{\{ \omega \}}(X)$ has a unique component of dimension $q + \chi - 1$ dominating $\Picc^0(X)$, so if (*) fails in this case, then necessarily $\linser{\omega_X}$ is exorbitant. 

\begin{proof}[Proof of Proposition \ref{Exorbitant.Numerical}] According to Corollary \ref{Exorbitant}, $\linser{\omega_X}$ is exorbitant if and only if the natural mapping
\[
\PP(\FF) \lra \PP \big( \HH{d}{X}{\OO_X} \big) \, = \, \PP^{p_g -1}
\] 
fails to be surjective. But the Segre number in question computes the degree in $\PP^{q-1}$ of the preimage of a general point in the target, and the statement follows. 
\end{proof}

\begin{example}
Suppose that $X$ is a surface without irrational pencils.  Then $p_g -\chi = q- 1$, and the BGG complex takes the form
\[
0 \lra \OO_{\PP}(-2) \lra \OO_{\PP}(-1)^q\lra \OO_\PP^{p_g} \lra \FF \lra 0.
\]
In this case the Segre number appearing in Proposition \ref{Exorbitant.Numerical} is the coefficient of $t^{q-1}$ in $(1+t)^q/(1+2t)$, and this is $=0$ if $q$ is even, and $=1$ if $q$ is odd. Thus $\linser{\omega_X}$ is exorbitant if and only if $q$ is even, a fact observed by Beauville in \cite{beauville}, \S4. 
\end{example}

 

\providecommand{\bysame}{\leavevmode\hbox
to3em{\hrulefill}\thinspace}

\end{document}